\title[Symmetries of center singularities]{Symmetries of center singularities of plane vector fields}
\author{Sergiy Maksymenko}
\address{Topology dept., Institute of Mathematics of NAS of Ukraine, Tere\-shchenkivska st. 3, Kyiv, 01601 Ukraine}
\email{maks@imath.kiev.ua}
\keywords{Center singularity, orbit preserving diffeomorphisms, shift map}
\subjclass[2000]{37C10, 37C27, 37C55}
\newcommand\testshape{family=\f@family; series=\f@series; shape=\f@shape.}
\def\myemphInternal#1{\if n\f@shape%
\begingroup\itshape #1\endgroup\/%
\else\begingroup\bfseries #1\endgroup%
\fi}
\def\myemph{\futurelet\testchar\MaybeOptArgmyemph}
\def\MaybeOptArgmyemph{\ifx[\testchar \let\next\OptArgmyemph
                 \else \let\next\NoOptArgmyemph \fi \next}
\def\OptArgmyemph[#1]#2{\index{#1}\myemphInternal{#2}}
\def\NoOptArgmyemph#1{\myemphInternal{#1}}
\theoremstyle{plain}
\newtheorem{theorem}[subsection]{Theorem}
\newtheorem{lemma}[subsection]{Lemma}
\newtheorem{proposition}[subsection]{Proposition}
\newtheorem{corollary}[subsection]{Corollary}
\newtheorem{claim}[subsection]{Claim}
\newtheorem{remark}[subsection]{Remark}
\newtheorem{example}[subsection]{Example}
\theoremstyle{definition}
\newtheorem{definition}[subsection]{Definition}
\newcommand\CCC{\mathbb{C}}
\newcommand\NNN{\mathbb{N}}
\newcommand\RRR{\mathbb{R}}
\newcommand\ZZZ{\mathbb{Z}}
\newcommand\id{\mathrm{id}}
\newcommand\im{\mathrm{im\,}}
\newcommand\Int{\mathrm{Int}}
\newcommand\Per{\mathrm{Per}}
\newcommand\eps{\varepsilon}
\newcommand\imSh[1]{Sh(#1)}
\newcommand\AFld{F}
\newcommand\AFlow{\mathbf{F}}
\newcommand\singA{\Sigma_{\AFld}}
\newcommand\ShA{\varphi}
\newcommand\imShA{\imSh{\AFld}}
\newcommand\kerA{\ker(\ShA)}
\newcommand\BFld{G}
\newcommand\BFlow{\mathbf{G}}
\newcommand\ShB{\psi}
\newcommand\imShB{\imSh{\BFld}}
\newcommand\afunc{\alpha}
\newcommand\bfunc{\beta}
\newcommand\func{f}
\newcommand\gfunc{g}
\newcommand\gdif{g}
\newcommand\hdif{h}
\newcommand\Mman{M}
\newcommand\Vman{V}
\newcommand\Aut{\mathrm{Aut}}
\newcommand\orig{O}
\newcommand\orb{o}
\begin{document}

\begin{abstract}
Let $D^2 \subset\mathbb{R}^2$ be a closed unit $2$-disk centered at the origin $O\in \mathbb{R}^2$, and $F$ be a smooth vector field such that $O$ is a unique singular point of $F$ and all other orbits of $F$ are simple closed curves wrapping once around $O$.
Thus topologically $O$ is a ``center'' singularity.
Let $\theta: D^2 \setminus\{O\}\to(0,+\infty)$ be the function associating to each $z\not=O$ its period with respect to $F$. 
This function can be discontinuous at $O$.

Let $\mathcal{D}^{+}(F)$ be the group of all diffeomorphisms of $D^2$ which preserve orientation and orbits of $F$.
Under assumption that $\theta$ smoothly extends to all of $D^2$ we prove that $\mathcal{D}^{+}(F)$ is homotopy equivalent to the circle.
\end{abstract}

\maketitle
\newcommand\flatsign[1]{\bar{#1}}
\newcommand\liftsign[1]{\widetilde{#1}}
\newcommand\flowsign[1]{\mathbb{#1}}

\newcommand\TC{TC}
\newcommand\PTC{PTC}
\newcommand\Stabf{\mathcal{S}(f)}

\newcommand\EA{\mathcal{E}(\AFld)}
\newcommand\EApl{\mathcal{E}^{+}(\AFld)}
\newcommand\EAd{\mathcal{E}^{+}(\AFld,\partial)}

\newcommand\DA{\mathcal{D}(\AFld)}
\newcommand\DApl{\mathcal{D}^{+}(\AFld)}
\newcommand\DAd{\mathcal{D}^{+}(\AFld,\partial)}

\newcommand\EB{\mathcal{E}(\BFld)}
\newcommand\EBpl{\mathcal{E}^{+}(\BFld)}

\newcommand\EAp[1]{\mathcal{E}^{+}(\AFld)_1}

\newcommand\Df{\mathcal{D}^{+}(\func)}
\newcommand\Dfd{\mathcal{D}^{+}(\func,\partial)}

\newcommand\GLR[1]{\mathrm{GL}(#1,\RRR)}
\newcommand\GLRpl[1]{\mathrm{GL}^{+}(#1,\RRR)}

\newcommand\jet{j^1} 
\newcommand\jetinv{\jet^{-1}}

\newcommand\disk{D^2}
\newcommand\disko{\disk\setminus\orig}
\newcommand\tdisk{\widetilde{\disk}}

\newcommand\canon[1]{\mathsf{c}_{#1}}
\newcommand\cls[1]{[#1]}

\newcommand\Cinf{C^{\infty}}
\newcommand\Ci[2]{\Cinf(#1,#2)}
\newcommand\Cr[1]{C^{#1}}

\newcommand\XX{\mathcal{X}}
\newcommand\YY{\mathcal{Y}}

\newcommand\Wr[1]{\mathsf{W}^{#1}}
\newcommand\Sr[1]{\mathsf{S}^{#1}}

\newcommand\Hman{\mathbb{H}}
\newcommand\IHman{\mathring{\Hman}}
\newcommand\dHman{\partial\Hman}

\newcommand\thdif{\liftsign{\hdif}}
\newcommand\tgdif{\liftsign{\gdif}}
\newcommand\rrho{r}

\newcommand\tafunc{\liftsign{\afunc}}
\newcommand\tbfunc{\liftsign{\bfunc}}
\newcommand\tmu{\liftsign{\mu}}
\newcommand\tsigma{\liftsign{\sigma}}
\newcommand\tzeta{\liftsign{\zeta}}
\newcommand\txi{\liftsign{\xi}}

\newcommand\dd[1]{\frac{\partial}{\partial #1}}
\newcommand\ddd[2]{\frac{\partial #1}{\partial #2}}

\newcommand\dAx{\flatsign{X}}
\newcommand\dAy{\flatsign{Y}}
\newcommand\dAp{\flatsign{\Phi}}
\newcommand\dAr{\flatsign{R}}

\newcommand\CiZHR{\Cinf_{\ZZZ}(\Hman,\RRR)}
\newcommand\EBZpl{\mathcal{E}_{\ZZZ}^{+}(\BFld)}
\newcommand\FlatZHR{\mathsf{Flat}_{\ZZZ}(\Hman,\dHman)}
\newcommand\FlatHR{\mathsf{Flat}(\Hman,\dHman)}

\newcommand\FlatOR{\mathsf{Flat}(\RRR^2,\orig)}
\newcommand\contW[2]{\mathsf{W}^{#1,#2}}
\newcommand\contS[2]{\mathsf{S}^{#1,#2}}

\newcommand\thp{\liftsign{\Phi}} 
\newcommand\thr{\liftsign{R}} 

\newcommand\hx{X} 
\newcommand\hy{Y} 

\newcommand\tfunc{\liftsign{\func}}
\newcommand\torb{\liftsign{\orb}}

\newcommand\MapR{\mathsf{Map}^{\infty}(\RRR^2,0)}
\newcommand\MapH{\mathsf{Map}^{\infty}(\Hman,\dHman)}
\newcommand\MapZH{\mathsf{Map}^{\infty}_{\ZZZ}(\Hman,\dHman)}

\newcommand\jo[1]{j^{1}#1(\orig)}
\newcommand\jr[1]{j^{1}#1}

\newcommand\ml{\mathsf{lift}}
\newcommand\ff{\mathsf{fl}}

\newcommand\tg{\mathrm{tg}}
\newcommand\arctg{\mathrm{arctg}}

\newcommand\BP{\mathbf{\Phi}}
\newcommand\BR{\mathbf{R}}

\newcommand\Nbh{\mathcal{N}}

\newcommand\tz{\liftsign{z}}

\newcommand\FuncVanD{\mathcal{A}}

\newcommand\RE{Re}

\newcommand\rank{\mathrm{rank}}
\newcommand\spectr{\mathrm{sp}}
\section{Introduction}
Let $\disk\subset\RRR^2$ be a unit $2$-disk centered at the origin $\orig\in\Int\disk$ and $\AFld$ be a $\Cinf$ vector field with the following properties:
\begin{enumerate}
\item[(T1)] $\AFld$ is tangent to $\partial\disk$;
\item[(T2)] $\orig$ is a unique singular point of $\AFld$;
\item[(T3)] all other orbits of $\AFld$ are closed.
\end{enumerate}
Then it is easy to find a \myemph{homeomorphism} 
\begin{equation}\label{equ:h_rounding_orbits}
\hdif:\disk\to\disk 
\end{equation}
such that for each orbit $\orb$ of $\AFld$ its image $\hdif(\orb)$ is the circle of some radius $c\in(0,1]$ centered at origin, see Figure~\ref{fig:center_point}.
Therefore we will call a vector field $\AFld$ on $\disk$ satisfying (T1)-(T3) a \myemph{\TC\ vector field} and its singular point $\orig$ will be called a \myemph{topological center}.

\begin{figure}[ht]
\includegraphics[height=2cm]{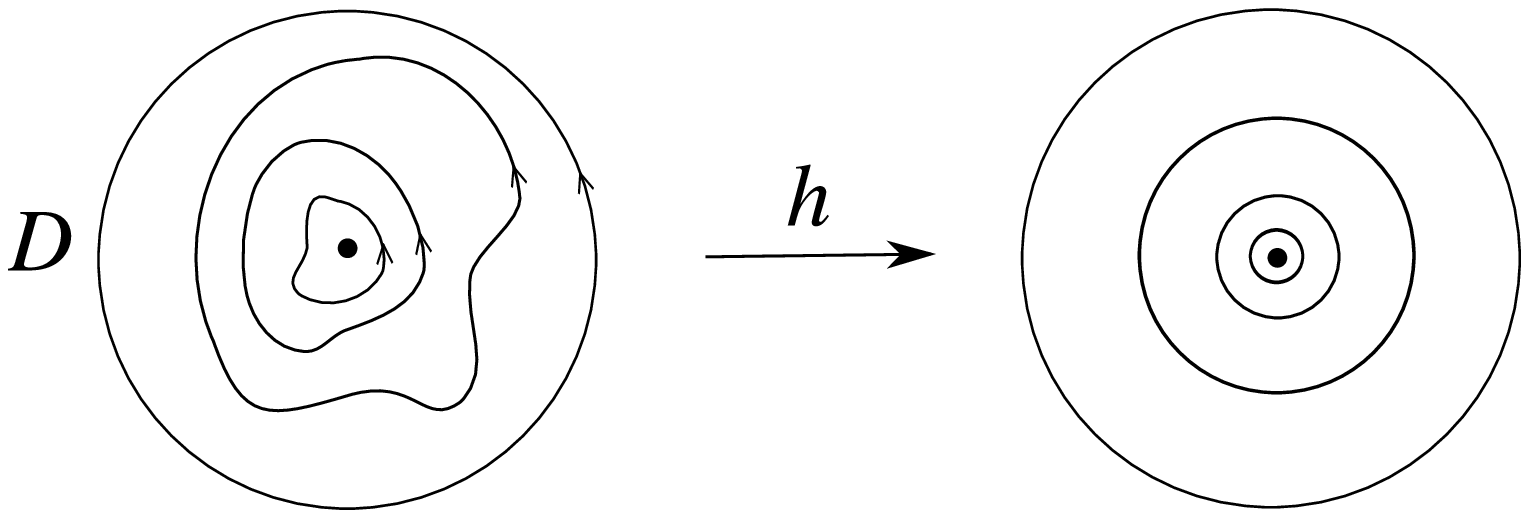}
\caption{}\label{fig:center_point}
\end{figure}

\TC\ vector fields often arise as Hamiltonian vector fields of local extremes of functions on surfaces.
For instance, let $\func:\disk\to[0,1]$ be a $\Cinf$ function such that $\func^{-1}(1)=\partial\disk$, 
$\func^{-1}(0)=\orig$, and $\orig$ is a unique critical point of $\func$ (being a global minimum of $\func$), see Figure~\ref{fig:func}:
\begin{figure}[ht]
\includegraphics[height=2cm]{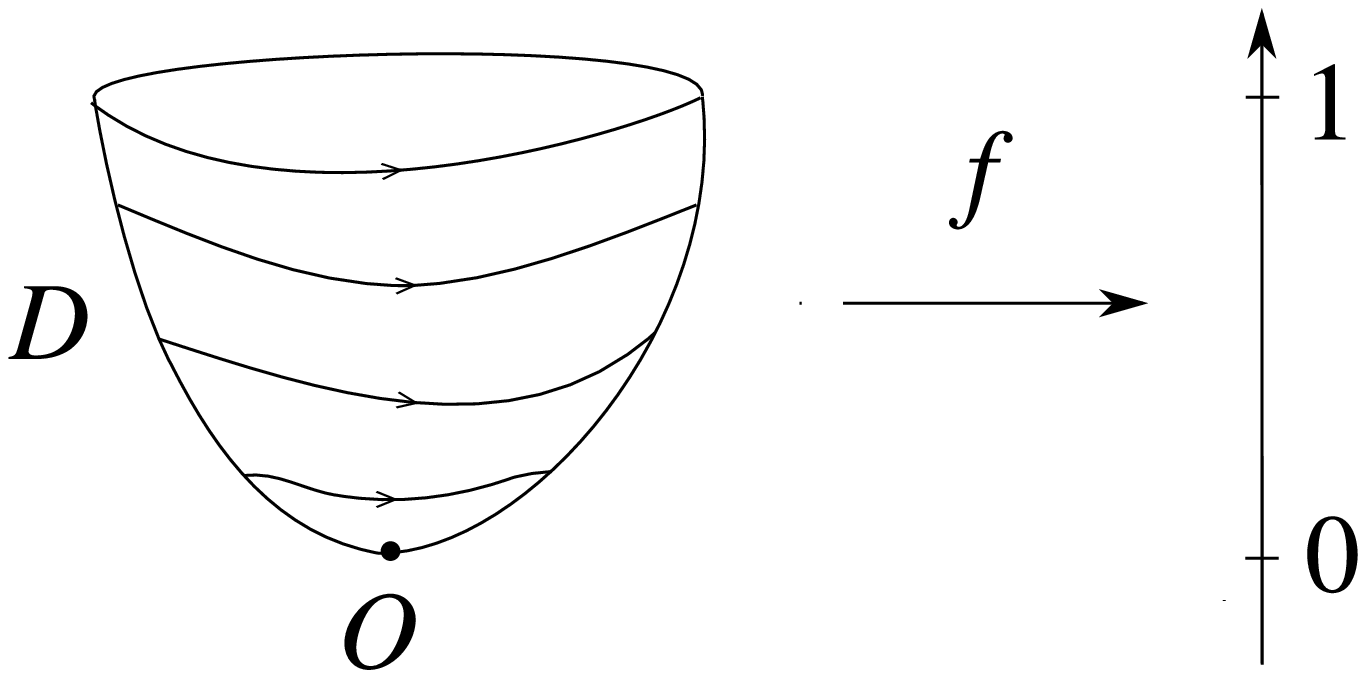}
\caption{}\label{fig:func}
\end{figure}
Then its Hamiltonian vector field $\AFld(x,y)=-\func'_{y}\dd{x}+\func'_{x}\dd{y}$ is \TC\ and $\func$ is its $\Cinf$ strong first integral in the following sense.

\begin{definition}{\rm c.f.~\cite{CGGL:JDE:99}}
A $\Cinf$ function $\func:\disk\to\RRR$ is a \myemph{strong first integral} for $\AFld$, if $\orig$ is a unique critical point of $\func$ and the Lie derivative $\AFld(\func)\equiv 0$, i.e. $\func$ is constant along orbits of $\AFld$.
\end{definition}

Since $\AFld$ is $\Cinf$, we can also assume that a homeomorphism $\hdif$ in~\eqref{equ:h_rounding_orbits} diffeomorphically maps $\disko$ onto itself, though it may loose differentiability at $\orig$.
In this case $\AFld$ has a \myemph{continuous} first integral on $\disk$ defined e.g. by $\func(z) = |\hdif(z)|^2$.
This function is $\Cinf$ on $\disko$ provided so is $\hdif$, but $\func$ is not necessarily smooth at $\orig$.

\medskip

Denote by $\DApl$ the group of $\Cinf$ orientation preserving diffeomorphisms $\hdif$ of $D^2$ such that $\hdif(\orb)=\orb$ for each orbit $\orb$ of $\AFld$.
Let also $\DAd$ be a subgroup of $\DApl$ consisting of diffeomorphisms fixed of $\partial\disk$.
We endow $\DApl$ and $\DAd$ with the weak $\Wr{\infty}$ Whitney topology.
The aim of the present paper is to describe the homotopy types of $\DApl$ and $\DAd$.

\medskip 

Let $\theta:\disko\to(0,+\infty)$ be the function associating to each $z\in\disko$ its period $\theta(z)$ with respect to $\AFld$.
We will call $\theta$ the \myemph{period function}.
Then it is easy to see that $\theta$ is $\Cinf$ on $\disko$, but it general it can not be even continuously extended to all of $\disk$.

\begin{example}\label{exmp:non-deg-hamvf}
Let $\AFld(x,y)=-y\dd{x}+x\dd{y}$.
Then $\theta \equiv 2\pi$, and therefore it is $\Cinf$ on all of $\disk$.
It follows from~\cite{Maks:Shifts, Maks:LocInv} that $\DApl$ is homotopy equivalent to $S^1$.
The generator of $\pi_1\DApl$ is given by the following isotopy 
$$
H:\DApl\times I\to\DApl,\qquad
H(\hdif,t)(z)=e^{2\pi i t}\hdif(z).
$$
\end{example}

\begin{example}\label{exmp:deg-hamvf}
Let $Q_1,\ldots,Q_n:\RRR^2\to\RRR$ be definite (that is irreducible over $\RRR$) quadratic forms such that $Q_i/Q_j\not=\mathrm{const}$ for $i\not=j$,
$$\func=Q_1\cdots Q_n,$$ and $\AFld(x,y)=-\func'_{y}\dd{x}+\func'_{x}\dd{y}$ be the Hamiltonian vector field of $\func$.

If $n\geq2$, then $\lim\limits_{z\to\orig}\theta(z)=+\infty$, whence $\theta$ can not be continuously extended to $\disk$.
Then, \cite{Maks:part-pres-diff},
$\DApl$ is \myemph{path-connected with respect to $\Wr{0}$ topology}.
On the other hand in all others $\Wr{r}$ topologies ($r\geq1$) the group $\pi_0\DApl$ is (the same for all $r\geq1$) \myemph{non-trivial finite cyclic group of even order}.
Moreover, each path component of $\DApl$ is contractible with respect to $\Wr{\infty}$ topology.
\end{example}

It turns out that these examples describe all the possibilities for $\theta$.
Actually the following theorem holds true:

\begin{theorem}\label{th:charact_period_shift_maps}
Let $\AFld=\AFld_1\dd{x} + \AFld_2\dd{y}$ be a \TC\ vector field on $\disk$ and let $\theta:\disko\to(0,+\infty)$ be its period function.
Then the following conditions are equivalent:
\begin{enumerate}
 \item[\rm(a)] $\theta$ smoothly extends to all of $\disk$;
 \item[\rm(b)] the eigen values of the matrix 
$$ 
\nabla\AFld = \left(\begin{array}{cc}
\ddd{\AFld_1}{x} &  \ddd{\AFld_1}{y} \\ [2mm]
\ddd{\AFld_2}{x} &  \ddd{\AFld_2}{y}
\end{array} \right)
$$ 
at $\orig$ are non-zero purely imaginary;
\item[\rm(c)] there exists a $\Cinf$ function $\bfunc:\disk\to\RRR$ and a diffeomorphism $\gdif:(\disk,\orig)\to(\disk,\orig)$ such that $\bfunc(\orig)\not=0$ and
$$
\gdif_{*}\AFld = \bfunc(x^2+y^2) \left(-y \frac{\partial}{\partial x} + x \frac{\partial}{\partial y}\right) + \dAx\dd{x}+ \dAy\dd{y},
$$
where $\dAx,\dAy\in\FlatOR$.
\end{enumerate}

If either of these conditions fails, then $\lim\limits_{z\to\orig}\theta(z)=+\infty$.
\end{theorem}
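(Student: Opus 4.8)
The plan is to establish the implications $(c)\Rightarrow(b)$, $(b)\Rightarrow(c)$, $(c)\Rightarrow(a)$, $(a)\Rightarrow(b)$, with the final assertion of the theorem emerging along the way. Everything hinges first on a discussion of the linear part $A=\nabla\AFld(\orig)$. If $A$ is hyperbolic, i.e.\ has no eigenvalue on the imaginary axis, then by Hartman--Grobman $\AFld$ is locally topologically conjugate near $\orig$ to $A$; but a linear node, focus or saddle never has all its orbits closed, contradicting (T3). If $A$ has one zero and one non-zero eigenvalue, the (un)stable manifold theorem yields a one-dimensional $\AFld$-invariant curve through $\orig$ along which every non-constant orbit tends monotonically to $\orig$, again contradicting (T3). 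Hence \emph{either} the eigenvalues of $A$ are non-zero and purely imaginary -- which is precisely (b) -- \emph{or} $A$ is nilpotent (possibly $A=0$); and these two alternatives exclude one another. Granting this, $(c)\Rightarrow(b)$ is immediate: the $1$-jet at $\orig$ of $\gdif_{*}\AFld$ coincides with that of $\bfunc(0)\left(-y\dd{x}+x\dd{y}\right)$ because $\dAx,\dAy$ are flat there, so its eigenvalues are $\pm i\bfunc(0)\neq0$, and $A$ is conjugate to this matrix via $D\gdif_{\orig}$.

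For $(b)\Rightarrow(c)$ I would first bring $A$ to the form $b\left(-y\dd{x}+x\dd{y}\right)$ with $b\neq0$ by a linear change of variables, then invoke the Poincar\'e--Dulac normal form for a planar germ with purely imaginary spectrum: a \emph{formal} change of coordinates turns $\AFld$ into $\widehat a(r^2)\left(-y\dd{x}+x\dd{y}\right)+\widehat c(r^2)\left(x\dd{x}+y\dd{y}\right)$, where $r^2=x^2+y^2$ and $\widehat a,\widehat c$ are formal power series with $\widehat a(0)=b$ and $\widehat c(0)=0$ (in the complexification $z=x+iy$ the surviving resonant monomials are exactly the $z|z|^{2k}$, $k\ge0$). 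By Borel's theorem this formal transformation is realized by a genuine $\Cinf$ diffeomorphism germ, which a standard isotopy-extension/gluing argument upgrades to a diffeomorphism $\gdif$ of $(\disk,\orig)$; after applying it, $\gdif_{*}\AFld=\bfunc(r^2)\left(-y\dd{x}+x\dd{y}\right)+\cfunc(r^2)\left(x\dd{x}+y\dd{y}\right)$ modulo a field flat at $\orig$, with $\bfunc,\cfunc\in\Cinf$ and $\bfunc(0)=b$. In polar coordinates the radial equation becomes $\dot r=r\cfunc(r^2)+(\text{flat})$; were $\cfunc$ not flat at $0$, its leading term would dominate the flat remainder on every sufficiently small circle $\{r=\mathrm{const}\}$, making $r$ strictly monotone along the orbits near $\orig$ and contradicting (T3). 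Thus $\cfunc$ is flat, it is absorbed into $\dAx\dd{x}+\dAy\dd{y}$, and (c) follows (with $\bfunc$ extended arbitrarily over the rest of $\disk$).

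For $(c)\Rightarrow(a)$ I would use that pushing a vector field forward merely conjugates its flow and hence preserves periods, so the period function of $\AFld$ equals that of $\gdif_{*}\AFld$ precomposed with $\gdif$; as $\gdif$ fixes $\orig$ we may therefore assume $\AFld=\AFld_0+\dAx\dd{x}+\dAy\dd{y}$ with $\AFld_0=\bfunc(r^2)\left(-y\dd{x}+x\dd{y}\right)$, $\bfunc(0)\neq0$, and $\dAx,\dAy$ flat at $\orig$. The orbits of $\AFld_0$ are circles centred at $\orig$ and its period function is $2\pi/\bfunc(r^2)$, which is $\Cinf$ on all of $\disk$. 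Since an $\AFld_0$-orbit through $z$ stays on the circle of radius $|z|$ and $\AFld-\AFld_0$ is flat, the variational equation together with Gronwall estimates over the bounded time intervals involved shows that, for every $t$ in that range, the time-$t$ maps of $\AFld$ and $\AFld_0$ differ by a map which, together with all its derivatives, is flat at $\orig$; since the return of $\AFld_0$ to a short transversal is transverse, the implicit function theorem then gives that the period function of $\AFld$ equals $2\pi/\bfunc(r^2)$ plus a function flat at $\orig$. In particular it extends smoothly across $\orig$, which is (a).

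Finally, $(a)\Rightarrow(b)$ and the last sentence of the theorem both reduce, through the dichotomy above, to the statement that if $A$ is nilpotent (including $A=0$) then $\theta(z)\to+\infty$ as $z\to\orig$. The underlying mechanism is that the flow of $\AFld$ is too slow near $\orig$ relative to the size of the orbits: writing $\theta(z)=\oint d\varphi/\dot\varphi$ with $\dot\varphi=(x\AFld_2-y\AFld_1)/r^2$, one has $\dot\varphi=O(|z|)$ when $A=0$ (so $\theta$ diverges trivially), while for a non-zero nilpotent block $\dot\varphi$ is merely $O(1)$ but collapses to $o(1)$ near the line $\ker A$, where every orbit lingers long; Example~\ref{exmp:deg-hamvf} displays this. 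To make the non-zero case precise I would normalize $A$ to a nilpotent Jordan block, desingularize $\orig$ by a quasi-homogeneous blow-up -- which replaces the center by a chain of semi-hyperbolic singularities on the exceptional divisor -- and bound the Dulac passage times near them from below; pulled back, one full turn of an orbit at distance $\sim\eps$ from $\orig$ then costs time at least a fixed negative power of $\eps$. I expect this to be the main obstacle of the whole proof: treating \emph{every} nilpotent (let alone more degenerate) center, rather than only the homogeneous Hamiltonian models, needs the full blow-up/normal-form analysis of degenerate planar centers, whereas the flatness-propagation step in $(c)\Rightarrow(a)$ is comparatively routine.
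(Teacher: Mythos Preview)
Your outline is correct, but two of your four steps take a harder road than the paper does, and you have misidentified where the real work lies.

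For $(a)\Rightarrow(b)$ the paper argues \emph{directly}, not by contrapositive. If $\theta$ extends smoothly then $\theta\in\kerA\setminus\{0\}$, so in particular $j^1\AFld(\orig)\neq0$; now the reparametrized field $\BFld=\theta\AFld$ has flow with $\BFlow_1=\id_{\disk}$, hence induces a smooth $S^1$-action on $\disk$. Its linearization at $\orig$ is a nontrivial one-parameter subgroup of $GL(2,\RRR)$ with $e^{\nabla\BFld(\orig)}=I$, which forces $\nabla\BFld(\orig)$, and therefore $\nabla\AFld(\orig)=\nabla\BFld(\orig)/\theta(\orig)$, to have nonzero purely imaginary spectrum. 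This entirely avoids analyzing nilpotent centers.

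For the last sentence of the theorem (nilpotent $A\Rightarrow\theta\to\infty$) the paper uses a one-line linearization fact (Remark~\ref{rem:eigen_val_vanish}), not blow-up. Write $\AFlow_t(z)-z=M(z,t)\,z$ with $M(\orig,t)=e^{tA}-I$. If some sequence $z_n\to\orig$ had bounded periods, pass to a subsequence with $\theta(z_n)\to T$ and $z_n/|z_n|\to v$; then $(e^{TA}-I)v=0$. For $A\neq0$ nilpotent this forces $v\in\ker A$, but since every orbit encircles $\orig$ we may choose the $z_n$ along a ray outside $\ker A$, a contradiction; the case $A=0$ you handled correctly via $\dot\varphi=O(|z|)$. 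So the Dulac passage-time programme you sketch is unnecessary, and this step is \emph{not} the main obstacle.

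Your $(b)\Rightarrow(c)$ and $(c)\Rightarrow(a)$ are correct and essentially equivalent to the paper's, just packaged differently: the paper quotes Takens' normal form theorem and rules out the spiraling branch~\eqref{equ:Takens_nonflat} by an explicit polar computation (Corollary~\ref{cor:Takens_nf_polar}), whereas you re-derive Takens via Poincar\'e--Dulac and Borel; and for $(c)\Rightarrow(a)$ the paper lifts to polar coordinates and computes the period as the shift function of $(\phi,\rrho)\mapsto(\phi+2\pi,\rrho)$, namely $\int_{\phi}^{\phi+2\pi}\!ds/(1+\dAp)=2\pi+\txi$ with $\txi$ flat and $\ZZZ$-invariant, which descends to $\disk$ by Lemma~\ref{lm:flat_Zinv_func}. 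Your Gronwall/variational argument reaches the same conclusion but less explicitly.
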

The implication (a)$\Rightarrow$(b) follows from~\cite{Maks:ReparamShMap}, and (b)$\Rightarrow$(c) from Takens~\cite{Takens:AIF:1973}.

A \TC\ vector field satisfying one of the conditions (a)-(c) of Theorem~\ref{th:charact_period_shift_maps} will be called \PTC. This notation reflects \myemph{periodicity} of shift map, see \S\ref{sect:shift-map}.

The main result of this paper is contained in the following theorem:
\begin{theorem}\label{th:DApl_DAd_hom_types}
 If $\AFld$ is a \PTC\ vector field on $\disk$, then $\DApl$ is homotopy equivalent to the circle, and $\DAd$ is contractible with respect to $\Wr{\infty}$-topologies.
\end{theorem}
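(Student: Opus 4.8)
The plan is to parametrise $\DApl$ and $\DAd$ by the shift map of the flow $\AFlow\colon\disk\times\RRR\to\disk$ of $\AFld$, which is complete since $\disk$ is compact and $\AFld$ is tangent to $\partial\disk$. For $\afunc\in\Ci{\disk}{\RRR}$ set $\ShA(\afunc)(z)=\AFlow(z,\afunc(z))$; this is a $\Cinf$ self-map of $\disk$ fixing $\orig$ and sending each orbit of $\AFld$ into itself. First I would record the diffeomorphism criterion: computing $d\ShA(\afunc)_z$ and using that the time-$t$ flow conjugates $\AFld(z)$ to $\AFld(\AFlow(z,t))$, one sees that $d\ShA(\afunc)_z$ is an invertible map post-composed with $v\mapsto v+d\afunc_z(v)\,\AFld(z)$, hence invertible exactly when $1+\AFld(\afunc)(z)\neq0$ (here $\AFld(\afunc)$ denotes the Lie derivative). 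Since this quantity equals $1$ at $\orig$ and $\disk$ is connected, being a local diffeomorphism amounts to $1+\AFld(\afunc)>0$ on $\disk$; and when this holds $\ShA(\afunc)$ is also injective (on each nonsingular orbit it is a circle map with positive derivative, and it permutes orbits trivially), hence $\ShA(\afunc)\in\DApl$. Therefore $\Gamma\defeq\{\afunc\in\Ci{\disk}{\RRR}:1+\AFld(\afunc)>0\ \text{on}\ \disk\}=\ShA^{-1}(\DApl)$ is an \emph{open convex} neighbourhood of $0$ in $\Ci{\disk}{\RRR}$ (an intersection of the affine half-spaces $\AFld(\afunc)(z)>-1$), in particular contractible.

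Next I would identify the fibres of $\ShA\colon\Gamma\to\DApl$. Because $\AFld$ is \PTC, the period function $\theta$ extends to a positive $\Cinf$ function on all of $\disk$ with $\AFld(\theta)=0$ (it is constant along orbits), so $n\theta\in\Gamma$ and $\ShA(\afunc+n\theta)=\ShA(\afunc)$ for all $n\in\ZZZ$; conversely $\ShA(\afunc)=\ShA(\afunc')$ forces $(\afunc-\afunc')/\theta\colon\disko\to\ZZZ$ to be continuous, hence constant, so $\afunc-\afunc'\in\theta\ZZZ$. Thus $\kerA=\theta\ZZZ\cong\ZZZ$ acts freely on $\Gamma$ by translations and the fibres of $\ShA$ are precisely its cosets. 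This is the step where \PTC\ is essential: if $\theta$ blows up at $\orig$ the identification fails and $\DApl$ acquires extra path components, as in Example~\ref{exmp:deg-hamvf}.

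The main geometric input is that $\ShA\colon\Gamma\to\DApl$ is surjective, i.e. every orbit-preserving orientation-preserving diffeomorphism $\hdif$ of $\disk$ is a shift. Over a flow box about any closed orbit this is standard: $\hdif$ differs from $\id$ by a shift along a smooth function, uniquely determined modulo the period of that orbit. The delicate part is a neighbourhood of $\orig$, and here I would invoke Theorem~\ref{th:charact_period_shift_maps}(c) to assume $\AFld=\bfunc(x^2+y^2)(-y\dd{x}+x\dd{y})+\dAx\dd{x}+\dAy\dd{y}$ with $\bfunc(\orig)\neq0$ and $\dAx,\dAy\in\FlatOR$; then near $\orig$ the flow is a reparametrised rotation perturbed by flat terms, it is \emph{periodic}, and a diffeomorphism preserving its orbits differs from $\id$ by a shift along a $\Cinf$ function, the flat perturbation being absorbed. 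The locally defined shift functions coincide modulo $\theta$, and since $\disk$ is contractible they assemble into a single $\afunc\in\Ci{\disk}{\RRR}$ with $\ShA(\afunc)=\hdif$ (and then $\afunc\in\Gamma$ automatically, by the first paragraph). In particular $\DApl=\ShA(\Gamma)$ is path-connected, being the continuous image of the connected set $\Gamma$.

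Finally I would show $\ShA\colon\Gamma\to\DApl$ admits continuous local sections near every $\hdif_0=\ShA(\afunc_0)$: for $\hdif$ near $\hdif_0$ one writes $\hdif\circ\hdif_0^{-1}$ near $\id$ and reads off the small shift function $z\mapsto$ ``$\AFlow$-time taking $z$ to $\hdif\hdif_0^{-1}(z)$'', which depends smoothly on $z$ up to $\orig$ thanks to the normal form --- this is the ``local inverse of the shift map'' of \cite{Maks:Shifts,Maks:LocInv}. Combined with the previous two paragraphs, $\ShA\colon\Gamma\to\DApl$ is then a locally trivial principal $\ZZZ$-bundle over the connected base $\DApl$ with contractible total space, i.e. the universal covering; hence $\DApl$ is aspherical with $\pi_1\DApl\cong\ZZZ$, and (the diffeomorphism group having the homotopy type of a CW-complex) it is homotopy equivalent to $S^1$, a generator of $\pi_1\DApl$ being the loop $t\mapsto\ShA(t\theta)$, $t\in[0,1]$, which in the model of Example~\ref{exmp:non-deg-hamvf} is the rotation loop. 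For $\DAd$ I would repeat everything with $\Ci{\disk}{\RRR}$ replaced by its closed linear subspace of functions vanishing on $\partial\disk$: put $\Gamma_{\partial}\defeq\{\afunc\in\Gamma:\afunc|_{\partial\disk}=0\}$, still convex and contractible, and note that $\ShA$ maps it onto $\DAd$ (if $\hdif\in\DAd$ and $\hdif=\ShA(\afunc)$ then $\afunc|_{\partial\disk}$ is a constant lying in $\theta|_{\partial\disk}\ZZZ=\tau\ZZZ$, where $\tau>0$ is the period of the orbit $\partial\disk$, and subtracting the corresponding $n\theta\in\kerA$ makes $\afunc|_{\partial\disk}=0$), while the kernel of $\ShA|_{\Gamma_{\partial}}$ is trivial since $\tau\neq0$. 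Thus $\ShA\colon\Gamma_{\partial}\to\DAd$ is a bijective local homeomorphism, so $\DAd$ is homeomorphic to $\Gamma_{\partial}$ and therefore contractible. I expect the hard part to be the analysis near $\orig$ in the surjectivity and local-section steps --- upgrading an orbit-preserving diffeomorphism to a shift along a function that is genuinely $\Cinf$ (not merely continuous) at the singular point in the presence of the flat perturbation of the rotation field --- which is exactly the point where the \PTC\ hypothesis is used.
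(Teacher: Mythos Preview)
Your proposal is correct and follows essentially the same route as the paper: the shift map $\ShA$ restricted to the convex set $\Gamma=\{\afunc:\AFld(\afunc)>-1\}$ is a $\ZZZ$-covering of $\DApl$, with surjectivity and local sections near $\orig$ extracted from Takens' normal form, and $\DAd$ is handled by further restricting to the convex subset of functions vanishing on $\partial\disk$, on which $\ShA$ becomes injective. The only organisational difference is that the paper first establishes the covering statement for the larger monoid $\EApl\supset\DApl$ of orbit-preserving $\Cinf$ maps (Theorem~\ref{th:shmap_periodic}) and then restricts to $\Gamma=\ShA^{-1}(\DApl)$, whereas you work with $\DApl$ from the outset; and the ``hard part'' you correctly flag --- producing a shift function that is genuinely $\Cinf$ at $\orig$ --- is carried out in the paper by lifting both $\AFld$ and $\hdif$ to polar coordinates (\S\ref{sect:polar_coordinates}--\S\ref{sect:Takens_nf}, Proposition~\ref{pr:lift_Map_id}), where the lifted field has no singular points so a shift function exists by Lemma~\ref{lm:shift-maps-without-sing-R2}, and then pushing it back down to $\disk$ via a division-by-$z$ argument (Lemma~\ref{lm:h_zg}) combined with the flat-function correspondence of Lemma~\ref{lm:flat_Zinv_func}.
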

Thus for \PTC\ vector fields description of homotopy types of $\DApl$ and $\DAd$ is the same as in Example~\ref{exmp:non-deg-hamvf}.
This result is a particular case of Theorem~\ref{th:shmap_periodic}.

\begin{remark}\rm
Suppose $\AFld$ has a $\Cinf$ strong first integral $\func:\disk\to[0,1]$ being a surjective function.
Then for each $c\in[0,1]$ its inverse image $\func^{-1}(c)$ is the orbit of $\AFld$.
It follows that each $\hdif\in\DApl$ preserves $\func$, i.e. $\func\circ\hdif=\func$.
Thus we can regard $\DApl$ as the stabilizer $\Stabf$ of $\func$ with respect to the \myemph{right} action of the group of orbit preserving diffeomorphisms of $\disk$ on $\Ci{\disk}{\RRR}$.

There is a vast of results concerning diffeomorphisms preserving functions.
Most of them deal with actions of compact Lie groups, see e.g.~\cite{Schwarz:T:75, Huffman:CJM:80}.
From this point of view Theorem~\ref{th:DApl_DAd_hom_types} describes the (infinite-dimensional) group of \myemph{all} orientation preserving symmetries of $\func$ but for a very specific case.
This theorem will be used in another papers for the description of the homotopy types of stabilizers and orbits of smooth functions on surfaces, which will extend results of~\cite{Maks:AGAG:2006} on Morse functions.
\end{remark}

Notice that due to Takens~\cite{Takens:AIF:1973} (see (c) of Theorem~\ref{th:charact_period_shift_maps}), a \PTC\ vector field $\AFld$ is a ``flat perturbation'' of the vector field of  Example~\ref{exmp:non-deg-hamvf}.
We prove that $\AFld$ is parameter rigid (see Claim~\ref{clm:F_is_param_rig}) and using this fact give another proof that any $\Cinf$ strong first integral $\func:\disk\to\RRR$ of $\AFld$ is a ``flat perturbation'' of a smooth function depending on $x^2+y^2$, see~\cite{ArnoldYllyashenko}:
\begin{theorem}\label{th:func_flat_pert}
Let $\AFld$ be a \PTC\ vector field having a $\Cinf$ strong first integral $\func:\disk\to\RRR$.
Then there exist $\Cinf$ functions $\gfunc:\RRR\to\RRR$ and $\mu:\disk\to\RRR$ such that $\mu$ is flat at $\orig$ such that 
$$
\func(x,y) = \gfunc(x^2+y^2) + \mu(x,y).
$$
\end{theorem}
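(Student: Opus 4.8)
The plan is to combine the normal-form description of $\AFld$ from part (c) of Theorem~\ref{th:charact_period_shift_maps} with the parameter rigidity of $\AFld$ (Claim~\ref{clm:F_is_param_rig}). First I would reduce to the normal form: by (c) there is a diffeomorphism $\gdif:(\disk,\orig)\to(\disk,\orig)$ with $\gdif_{*}\AFld = \bfunc(x^2+y^2)\bigl(-y\dd{x}+x\dd{y}\bigr) + \dAx\dd{x}+\dAy\dd{y}$, where $\bfunc(\orig)\neq 0$ and $\dAx,\dAy$ are flat at $\orig$. Replacing $\AFld$ by $\gdif_{*}\AFld$ and $\func$ by $\func\circ\gdif^{-1}$, it suffices to prove the statement for a vector field already in this normal form, since pulling a decomposition $\func = \gfunc(x^2+y^2)+\mu$ back by $\gdif$ does not immediately give a decomposition of the same shape — so actually one must be slightly careful here, and I would instead argue intrinsically, or else absorb the effect of $\gdif$ into the flat term after noting $\gdif$ is tangent to the identity up to the linear part (this is where the delicacy lies; see below).

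The core analytic step is to compare $\func$ with the averaged function
$$
\gfunc^{*}(z) \;=\; \frac{1}{\theta(z)}\int_{0}^{\theta(z)} \func\bigl(\AFlow(z,\tau)\bigr)\,d\tau,
$$
where $\AFlow$ is the local flow of $\AFld$; since $\func$ is a strong first integral, $\func$ is already constant along orbits, so in fact $\func$ itself is ``the average,'' and the real content is to show that $\func$, viewed as a function of the orbit, depends smoothly on $x^2+y^2$ up to a flat error. For this I would use that $\theta$ extends smoothly over $\disk$ (the defining property of a \PTC\ field) together with parameter rigidity: the time reparametrisation relating $\AFld$ to its ``model'' $2\pi$-periodic field $\bfunc(x^2+y^2)(-y\dd{x}+x\dd{y})$ (ignoring the flat part) shows that the orbit space of $\AFld$ near $\orig$ is smoothly identified, via $z\mapsto x^2+y^2$, with an interval $[0,\eps)$, modulo flat corrections coming from $\dAx\dd{x}+\dAy\dd{y}$. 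Concretely, the orbits of $\AFld$ agree with the circles $x^2+y^2=\mathrm{const}$ up to a flat perturbation of the bounding map, so the first integral $\func$ can be written as $h(x^2+y^2)$ on the set where the flat terms vanish and differs from such an expression by something flat elsewhere.

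More precisely, the key steps are: (i) invoke Claim~\ref{clm:F_is_param_rig} to get that any two parametrisations of the orbit foliation of $\AFld$ differ by a function that is smooth on $\disk$, in particular the foliation is ``smoothly the same'' as that of the model field away from flat terms; (ii) produce, using the smooth period function $\theta$, a $\Cinf$ map $\lambda:\disk\to[0,\infty)$, constant on orbits of $\AFld$, with $\lambda - (x^2+y^2)$ flat at $\orig$ — this is the heart of the argument and uses Takens' normal form to control the $\Cinf$-jet of the orbit foliation at $\orig$; (iii) since $\func$ is constant on orbits, $\func = \Psi\circ\lambda$ for some function $\Psi$ on the image of $\lambda$, and one checks $\Psi$ is $\Cinf$ away from $0$ and that its $\infty$-jet at $0$ is well-defined (again because $\theta$ is smooth, so orbits shrink to $\orig$ at a controlled rate and $\func$ has a genuine Taylor expansion at $\orig$); (iv) choose $\gfunc:\RRR\to\RRR$ to be any $\Cinf$ extension of $\Psi$ with the prescribed $\infty$-jet at $0$ (Borel's lemma), and set $\mu = \func - \gfunc(x^2+y^2)$, which is then $\Cinf$ on $\disko$, continuous on $\disk$, and has vanishing $\infty$-jet at $\orig$, hence is flat there.

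\textbf{Main obstacle.} The hard part will be step (ii)/(iii): showing that the ``orbit coordinate'' of $\AFld$ can be taken to be $x^2+y^2$ modulo a flat function, i.e. that the flat perturbation $\dAx\dd{x}+\dAy\dd{y}$ in the normal form only perturbs the foliation (and hence the first integral) by a flat amount. This requires estimating how a $\Cinf$-flat perturbation of a vector field with periodic linearisation perturbs its orbits, and then transferring that estimate to the first integral — essentially a quantitative version of the statement that flat perturbations of the ``rotational'' vector field are again ``flatly rotational,'' which is exactly where parameter rigidity (Claim~\ref{clm:F_is_param_rig}) and the smoothness of $\theta$ must be used in tandem. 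Handling the interchange with the conjugating diffeomorphism $\gdif$ from part (c) (which is only tangent-to-linear, not tangent-to-identity) adds a further bookkeeping layer, but does not change the substance.
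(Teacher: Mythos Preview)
Your outline is a plausible geometric route, but it is quite different from what the paper actually does, and the paper's argument is both shorter and avoids precisely the step you flag as the main obstacle.

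The paper's key trick is to apply parameter rigidity not to compare $\AFld$ with the model rotation field, but to compare $\AFld$ with the \emph{Hamiltonian vector field} $\AFld' = -\func'_{y}\dd{x} + \func'_{x}\dd{y}$ of the first integral $\func$ itself. Since $\func$ is constant on orbits of $\AFld$, the orbits of $\AFld'$ lie in orbits of $\AFld$, so Claim~\ref{clm:F_is_param_rig} gives a smooth $\nu$ with $\AFld' = \nu\AFld$. In the Takens normal-form coordinates this reads $\func'_{x} = (x+\dAx)\nu$, $\func'_{y} = (y+\dAy)\nu$ with $\dAx,\dAy$ flat, hence on the level of $\infty$-jets one gets the relation $x\cdot j^{\infty}\func'_{y} = y\cdot j^{\infty}\func'_{x}$. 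An elementary lemma (Lemma~\ref{lm:xpy_ypx}) then shows that any homogeneous polynomial $p$ satisfying $xp'_{y}=yp'_{x}$ is a scalar multiple of $(x^2+y^2)^{k}$, so the Taylor series of $\func$ has the form $\sum a_i(x^2+y^2)^{i}$; Borel's theorem then furnishes $\gfunc$, and $\mu=\func-\gfunc(x^2+y^2)$ is flat.

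By contrast, your plan tries to build an orbit invariant $\lambda$ with $\lambda-(x^2+y^2)$ flat and then factor $\func=\Psi\circ\lambda$. This can be made to work (for instance by averaging $x^2+y^2$ along orbits and invoking the flow estimates of Lemma~\ref{lm:sect_ShB} to see that the average equals $r^2$ plus a flat term), but it is heavier machinery for the same conclusion and your use of parameter rigidity in step~(i) --- ``any two parametrisations of the orbit foliation differ by a smooth function'' --- is not quite what Claim~\ref{clm:F_is_param_rig} says. The paper's Hamiltonian-vector-field trick bypasses all of this: it converts the analytic question about the orbit foliation directly into an algebraic identity on the Taylor coefficients of $\func$, after which the argument is two lines.
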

Existence of first integrals for such \PTC\ vector fields and the problem of recognizing of \TC\ integrals with non-degenerate linear part are studied in~\cite{Poincare, Lyapunov, Moussu:ETDS:82, Sibirsky1, Sibirsky2}.
See also~\cite[Chapter 5, \S4]{ArnoldYllyashenko} for a review of the problem and references.

\medskip 

The case of non \PTC\ vector field, i.e. when $\lim\limits_{z\to\orig}\theta(z)=+\infty$, will be considered in another paper, where it will be shown that under additional assumptions on $\AFld$ the description of $\DApl$ and $\DAd$ is similar to Example~\ref{exmp:deg-hamvf}.

\subsection{Structure of the paper.}
In \S\ref{sect:shift-map} we recall the notion of the shift map of a vector field and formulate Theorem~\ref{th:shmap_periodic}.

In \S\ref{sect:linearization}--\S\ref{sect:Jacobi_matr} linear parts of \TC\ vector fields and diffeomorphisms preserving their orbits are studied.

The idea of proofs of Theorems~\ref{th:charact_period_shift_maps} and~\ref{th:shmap_periodic} is to reduce $\AFld$ to a certain normal form and then ``blow up'' the singularity at $\orig$ by using polar coordinates.
Therefore in \S\ref{sect:polar_coordinates} we give conditions when a smooth functions, self-maps, and vector fields on $\disk$ yield the corresponding smooth objects in polar coordinates and vice versa.

In \S\ref{sect:Takens_nf} we recall the result of Takens~\cite{Takens:AIF:1973} about normal forms of vector fields on $\RRR^2$ with ``rotation as $1$-jet''.
We also study the formulas for the flows of these vector fields with respect to polar coordinates.

The rest of the paper is devoted to the proofs of Theorems~\ref{th:charact_period_shift_maps}, \ref{th:shmap_periodic}, and \ref{th:func_flat_pert}.

\subsection{Notations}
Let $f=(f_1,\ldots,f_m):\RRR^n\to\RRR^m$ be a $\Cinf$ map, $K\subset\RRR^n$ a compact subset, and $k\in\{0\}\cup\NNN$.
Then the \myemph{$k$-norm of $f$ on $K$} is defined by
$$
\|f\|^k_{K} = 
\sup_{x\in K}
\ \sum_{j=1}^{m} \
\sum_{|\alpha|\leq k} \frac{\partial^{|\alpha|} f_j}{\partial x^{\alpha}},
$$ 
where $\alpha=(\alpha_1,\ldots,\alpha_n)$, $\alpha_i\in\{0\}\cup\NNN$, and $|\alpha|=\sum_{i=1}^{n}\alpha_i$.
For a fixed $k$ the norm $\|\cdot\|^k_{K}$ generate the weak $\Wr{k}$ topology on $\Ci{\RRR^n}{\RRR^m}$.

More generally, let $A$ and $B$ be smooth manifolds.
Then for every $r=0,1,\ldots,\infty$ we can define the weak $\Wr{r}$- and the strong $\Sr{r}$ topologies on $\Ci{A}{B}$, see e.g.~\cite{Hirsch:DiffTop}.
We will assume that the reader is familiar with them.

Let $f\in\Ci{A}{B}$, $a\in A$, and $k\in\NNN\cup\{\infty\}$.
Then by $j^k f(z)$ we will denote the $k$-jet of $f$ at $z$.

A subset $\XX\subset\Ci{A}{B}$ will be called $\Wr{r}$-open ($\Wr{r}$-closed, etc.)\! if it is open (closed) with respect to the $\Wr{r}$ topology on $\Ci{A}{B}$.
Let also $C$ and $D$ be some other smooth manifolds, $\YY\subset \Ci{C}{D}$ be a subset, and $u:\XX\to\YY$ be a map.
Then $u$ will be called  \myemph{$\contW{s}{r}$-continuous} (\myemph{$\contW{s}{r}$-open} etc.) if it is continuous (open) from $\Wr{s}$-topology of $\XX$ to $\Wr{r}$-topology of $\YY$, $(r,s=0,1,\ldots,\infty)$.

Similarly, we can define \myemph{$\contS{s}{r}$-continuity}, \emph{$\contS{s}{r}$-openness} of maps, and \myemph{$\Sr{r}$-openness} of subsets.

\begin{definition}\label{defn:pres_smoothness}
We will say that $u:\XX\to\YY$ \myemph{preserves smoothness} if for any $C^{\infty}$ map $H:A\times \RRR^n\to B$ such that $H_t=H(\cdot,t)\in \XX$ for all $t\in\RRR^n$ the following mapping
$$
u(H): C\times \RRR^n\to D, 
\qquad 
u(H)(c,t) = u(H_t)(c)
$$
is $C^{\infty}$ as well.
\end{definition}
For instance, if $f:A\to B$ and $g:C\to D$ are $\Cinf$ maps, then the mapping 
$$
u:\Ci{B}{C} \to \Ci{A}{D},
\qquad u(\alpha) = g\circ \alpha \circ f,
$$
for $\alpha\in\Ci{A}{B}$ preserves smoothness.

\section{Shift map}\label{sect:shift-map}
In this section we formulate Theorem~\ref{th:shmap_periodic} containing Theorem~\ref{th:DApl_DAd_hom_types}.

Let $\AFld$ be a $\Cinf$ vector field of $\disk$.
Denote by $\EA$ the subset of $\Ci{\disk}{\disk}$ consisting of mappings $\hdif:\disk\to\disk$ having the following properties:
\begin{itemize}
 \item 
$\hdif(\omega)=\omega$ for every orbit $\omega$ of $\AFld$.
In particular, $\hdif(\orig)=\orig$.

 \item
$\hdif$ is local diffeomorphism at $\orig$ preserving orientation, that is the tangent map $T_{\orig}\hdif:T_{\orig}\disk\to T_{\orig}\disk$ is a non-degenerate linear map, and the Jacobian $|J(\hdif,\orig)|\not=0$, \cite[Cor.~21]{Maks:Shifts}.
\end{itemize}
Let also $\EAd \subset \EApl$ be the subset consisting of all maps $\hdif$ fixed on $\partial\disk$, i.e. $\hdif(x)=x$ for all $x\in\partial\disk$.

Notice that $\partial\disk$ is the orbit of $\AFld$.
Since, in addition, $\disk$ is compact, it follows that $\AFld$ generates a global flow  $\AFlow:\disk\times\RRR\to\disk$ on $\disk$.
Then we can define the following map
$$\ShA:\Ci{\disk}{\RRR}\to \Ci{\disk}{\disk},
\qquad
\ShA(\afunc)(x)=\AFlow(x,\afunc(x))
$$
where $\afunc\in\Ci{\disk}{\RRR}$ and $x\in\disk$.
We will call $\ShA$ the \myemph{shift map} along orbits of $\AFld$, see~\cite{Maks:Shifts}.
Denote by $\imShA$ the image of $\ShA$ in $\Ci{\disk}{\disk}$.

Let $\Vman\subset\disk$ be a subset, $\afunc:\Vman\to\RRR$ a function, and $\hdif:\disk\to\disk$ a map.
We will say that $\afunc$ is a \myemph{shift function} for $\hdif$ provided that $\hdif(z)=\AFlow(z,\afunc(z))$ for all $z\in\Vman$.
In particular, $\theta$ is the shift function for the identity map $\id_{\disk}$ on $\disko$.

It is easy to see,~\cite[Cor.~21]{Maks:Shifts}, that $\imShA\subset\EApl$.
Endow $\imShA$, $\EAd$, $\EApl$, and $\EA$ with the corresponding weak $\Wr{\infty}$ Whitney topologies.

The following set $\kerA=\ShA^{-1}(\id_{\disk})$ will be called the \myemph{kernel} of shift map.
It consist of $\Cinf$ functions $\mu:\disk\to\RRR$ such that $\AFlow(z,\mu(z))=z$ for all $z\in\disk$.
It is shown in~\cite[Cor.~6]{Maks:Shifts} that $$ \ShA^{-1}\ShA(\afunc) = \afunc + \kerA$$
for each $\afunc\in\Ci{\disk}{\RRR}$.

Since the set $\{\orig\}$ of singular points of $\AFld$ is nowhere dense in $\disk$, it follows from~\cite[Th.~12 \& Pr.~13]{Maks:Shifts} that the shift map $\ShA$ of $\AFld$ is locally injective even with respect $\Wr{0}$-topology of $\Ci{\disk}{\RRR}$ and there are the following two possibilities for $\kerA$:

{\bf Periodic case}.
$\kerA=\{n\mu\}_{n\in\ZZZ}$ for some $\Cinf$ strictly positive function $\mu:\disk\to(0,+\infty)$.
This function  will be called the \myemph{period} function for $\ShA$.

{\bf Non-periodic case}. $\kerA=0$, so $\ShA$ is injective map.
This case holds when $\AFld$ has at least one non-closed orbit, or the linear part, i.e. $1$-jet $j^{1}\AFld(z)$ at some singular point $z\in\disk$ of $\AFld$ vanish~\cite[Pr.~10]{Maks:Shifts}.

\begin{remark}\label{rem:eigen_val_vanish}
In fact, it can be proved similarly to~\cite[Pr.~10]{Maks:Shifts} that $\kerA=0$ provided \myemph{only the eigen values of $j^1\AFld(z)$ vanish}.

Moreover, in this case for any sequence of \myemph{periodic points} $\{z_i\}_{i\in\NNN}$ converging to $z$ (if such a sequence exists) their periods tend to infinity.
This remark will be used in the proof of Theorem~\ref{th:charact_period_shift_maps}.
\end{remark}

\medskip 

Evidently, if $\AFld$ is a \PTC\ vector field, that is its period function $\theta:\disko\to(0,+\infty)$ smoothly extends to all of $\disk$, then $\ShA$ is periodic, and $\theta=\mu$.

\begin{theorem}\label{th:shmap_periodic}
Let $\AFld$ be a \PTC\ vector field.
Then
\begin{enumerate}
\item
$\imShA=\EApl$  and the map $\ShA:\Ci{\disk}{\RRR}\to\EApl$ is an infinite cyclic covering map;
\item
the inclusions $\DApl\subset\EApl$ and $\DAd\subset\EAd$ are homotopy equivalences;
\item 
$\DApl$ and $\EApl$ are homotopy equivalent to the circle;
\item 
$\DAd$ and $\EAd$ are contractible.
\end{enumerate}
\end{theorem}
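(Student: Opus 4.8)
The plan is to deduce all four statements from structural properties of the shift map $\ShA$ and from the position, inside $\Ci{\disk}{\RRR}$, of the preimages $\ShA^{-1}(\DApl)$ and $\ShA^{-1}(\EApl)$. Throughout I use the normal form of Theorem~\ref{th:charact_period_shift_maps}(c) together with a blow-up of $\orig$ by polar coordinates (\S\ref{sect:polar_coordinates}, \S\ref{sect:Takens_nf}); write $\theta=\mu$ for the period of $\ShA$, which in the \PTC\ case is the period function.

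\textbf{Step 1 (part 1): $\ShA$ is an infinite cyclic covering onto $\EApl$.} Since $\AFld$ is \PTC, $\ShA$ is periodic, $\kerA=\{n\theta\}_{n\in\ZZZ}$, and $\ShA^{-1}\ShA(\afunc)=\afunc+\ZZZ\theta$ for every $\afunc\in\Ci{\disk}{\RRR}$. The translation action $\afunc\mapsto\afunc+n\theta$ of $\ZZZ$ on $\Ci{\disk}{\RRR}$ is free and properly discontinuous because $\theta$ is bounded away from $0$. As $\ShA$ is continuous and, by~\cite[Th.~12 \& Pr.~13]{Maks:Shifts}, a local homeomorphism onto its image, it suffices to prove $\imShA=\EApl$; then $\ShA$ is the quotient of $\Ci{\disk}{\RRR}$ by this $\ZZZ$-action, i.e. a regular infinite cyclic covering onto $\EApl$. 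For surjectivity, take $\hdif\in\EApl$. On $\disko$ it preserves every orbit, a circle, so it admits a shift function which is locally $\Cinf$ and determined modulo $\theta$. Passing to polar coordinates turns $\disko$ into a half-open cylinder, turns $\AFld$ into a vector field whose flow moves only in the angular direction with angular speed a $\Cinf$ positive function of the radius corrected by flat terms, and turns $\hdif$ into a $\ZZZ$-equivariant self-map of the universal cover fixing each radius-circle setwise; its angular displacement is then a single-valued $\Cinf$ function, giving a $\Cinf$ shift function $\afunc$ on $\disko$, and the two requirements in the definition of $\EApl$ (local diffeomorphism at $\orig$, non-zero Jacobian) are exactly what forces $\afunc$ to extend smoothly across $\orig$. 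Hence $\hdif=\ShA(\afunc)\in\imShA$.

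\textbf{Step 2 (parts 3, 4 for $\EApl,\EAd$).} The space $\Ci{\disk}{\RRR}$ is a topological vector space, hence convex and contractible, and fixing a $\ZZZ$-equivariant linear projection $p$ onto the line $L=\{t\theta:t\in\RRR\}$ (e.g. $p(\afunc)=\theta\int_\disk\afunc\,/\int_\disk\theta$, which satisfies $p(\afunc+n\theta)=p(\afunc)+n\theta$), the straight-line homotopy from $\id$ to $p$ is $\ZZZ$-equivariant; it descends to a deformation retraction of $\EApl=\Ci{\disk}{\RRR}/\ZZZ$ onto $L/\ZZZ\cong S^1$. For the boundary version, $\partial\disk$ is one orbit, of period $T=\theta|_{\partial\disk}$, and $\ShA(\afunc)$ is fixed on $\partial\disk$ exactly when $\afunc|_{\partial\disk}\equiv nT$ for some $n\in\ZZZ$; subtracting $n\theta\in\kerA$ shows every $\hdif\in\EAd$ has a \emph{unique} shift function vanishing on $\partial\disk$. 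Thus $\EAd$ is homeomorphic to the affine subspace $\{\afunc\in\Ci{\disk}{\RRR}:\afunc|_{\partial\disk}=0\}$, which is convex, so $\EAd$ is contractible.

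\textbf{Step 3 (part 2).} Put $\widetilde{\DApl}=\ShA^{-1}(\DApl)$, a $\ZZZ$-invariant subset of $\Ci{\disk}{\RRR}$; I claim it is \emph{convex}. In the polar model a map in $\imShA$ keeps the radius and adds a $\ZZZ$-periodic $\Cinf$ angular displacement $\Psi$, and it is a diffeomorphism of $\disk$ iff $1+\partial_\phi\Psi>0$ everywhere (the sign being forced positive by orientation-preservation at $\orig$). Since the angular displacement of $\ShA(\afunc)$ depends \emph{affinely} on $\afunc$ — it is $\afunc$ times the angular speed plus a flat correction, all linear in $\afunc$ — the condition ``$\ShA(\afunc)$ is a diffeomorphism'' is an intersection over all points of open affine half-spaces, hence convex. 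So $\widetilde{\DApl}$ is convex, it contains $L$ (the functions $t\theta$ give genuine rotations), and the homotopy from Step~2 restricted to $\widetilde{\DApl}$ stays inside it and is $\ZZZ$-equivariant, descending to a deformation retraction of $\DApl$ onto $L/\ZZZ\cong S^1$. The inclusion $\DApl\subset\EApl$ is covered by the inclusion of contractible spaces $\widetilde{\DApl}\subset\Ci{\disk}{\RRR}$, hence is the identity on $\pi_1\cong\ZZZ$ and a homotopy equivalence. Finally $\widetilde{\DApl}\cap\{\afunc|_{\partial\disk}=0\}$ is convex and contains $0$, hence contractible, and maps homeomorphically onto $\DAd$; since any map between contractible spaces is a homotopy equivalence, $\DAd\subset\EAd$ is one. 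This gives all of (1)--(4).

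The main obstacle is Step~1 — proving $\imShA=\EApl$ and the covering property — because it is precisely here that the reduction to Takens's normal form and a careful smoothness analysis in polar coordinates, including control of the flat perturbation terms and the behaviour at $\orig$, are genuinely needed; once $\ShA$ is known to be a $\ZZZ$-covering and $\widetilde{\DApl}$ is known to be convex, the homotopy-theoretic conclusions in (2)--(4) are formal. A secondary technical point is to verify that the diffeomorphism condition really remains affine in the shift function after the flat correction terms are included, so that $\widetilde{\DApl}$ is truly convex rather than merely contractible.
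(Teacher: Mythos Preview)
Your overall architecture matches the paper's proof closely: reduce to the Takens normal form, lift to polar coordinates to produce shift functions and establish $\imShA=\EApl$ (this is the paper's Proposition~\ref{pr:sect_ShA_all}, proved in \S\ref{sect:proof:pr:sect_ShA_all}), then use convexity of $\ShA^{-1}(\DApl)$ and of the boundary-vanishing subspace to get the homotopy statements. You are right that Step~1 carries essentially all the analytic content; the paper spends \S\ref{sect:proof:pr:sect_ShA_all} on it, and the key device you do not mention is writing $\hdif(z)=z\,\gamma(z)$ with $\gamma$ smooth (Lemma~\ref{lm:h_zg}, relying on continuity of division by $z$) in order to push the shift function smoothly across $\orig$.

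There is, however, a genuine gap in your Step~3 argument for the convexity of $\widetilde{\DApl}=\ShA^{-1}(\DApl)$. In the polar model for a general \PTC\ field the flow does \emph{not} preserve the radius --- by~\eqref{equ:BFlow_t_R} one has $\BR(\phi,\rrho,t)=\rrho+\mu_\rrho(\phi,\rrho,t)$ with $\mu_\rrho$ flat but nonzero --- so your description ``keeps the radius and adds an angular displacement $\Psi$'' is only an approximation, and the diffeomorphism criterion ``$1+\partial_\phi\Psi>0$'' is not the exact condition. Moreover the angular displacement of $\ShA(\afunc)$ is $\afunc+\mu_\phi(\phi,\rrho,\afunc)$ by~\eqref{equ:BFlow_t_P}, and $\mu_\phi$ depends nonlinearly on its third argument, so the displacement is \emph{not} affine in $\afunc$. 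Your ``secondary technical point'' is therefore not a detail to be checked but a claim that is false as stated. The paper avoids the issue entirely: citing~\cite{Maks:Shifts} it uses the intrinsic characterisation
\[
\ShA^{-1}(\DApl)\;=\;\Gamma\;=\;\{\,\afunc\in\Ci{\disk}{\RRR}\ :\ \AFld(\afunc)>-1\,\},
\]
which comes from the Jacobian formula $\det J(\ShA(\afunc),z)=(1+\AFld(\afunc)(z))\cdot\det J(\AFlow_{\afunc(z)},z)$ and is manifestly convex since $\afunc\mapsto\AFld(\afunc)$ is linear. Replace your polar-coordinate convexity argument with this, and your proof goes through.
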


The proof will be given in \S\ref{sect:th:shmap_periodic}.

\subsection{Shift map of non-singular vector fields on $\RRR^2$}
We will also use the following statement.
\begin{lemma}\label{lm:shift-maps-without-sing-R2}
Let $\Mman$ be either the plane $\RRR^2$ or the half-plane $\Hman$ and $\BFld$ be a vector field on $\Mman$ tangent to $\partial\Mman$ (in the case $\Mman=\Hman$) and having no singular points.
Suppose that $\BFld$ generates a flow $\BFlow:\Mman\times\RRR\to\Mman$ and let $\ShB:\Ci{\Mman}{\RRR}\to\Ci{\Mman}{\Mman}$ be the shift map of $\Mman$.
Then its image $\imShB$ coincides with $\EB$, and the map $\ShB:\Ci{\Mman}{\RRR}\to\EB$ is a $\contS{r}{r}$-homeomorphism for all $r\geq0$.

Both maps maps $\ShB$ and $\ShB^{-1}$ preserve smoothness.
\end{lemma}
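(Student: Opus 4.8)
The plan is to use the standard fact that a non-singular flow on $\RRR^2$ or $\Hman$ is, up to a diffeomorphism, a product flow, and then to verify the assertions for a product flow where everything is explicit. First I would invoke the flow-box theorem in its global form: since $\BFld$ has no singular points and generates a complete flow on $\Mman$, the quotient $\Mman/\BFlow$ of the orbit space is a $1$-manifold (either $\RRR$ or, in the half-plane case after collapsing the boundary orbit, again a manifold with boundary), and one can construct a diffeomorphism $\Phi:\Mman\to N\times\RRR$ (with $N=\RRR$, or $N=[0,+\infty)$ when $\Mman=\Hman$) conjugating $\BFlow$ to the translation flow $(n,s,t)\mapsto(n,s+t)$. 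Because conjugation by a fixed diffeomorphism preserves smoothness and is an $\contS{r}{r}$-homeomorphism on the relevant mapping spaces (and carries $\EB$ to $\mathcal{E}$ of the translation field, and the shift map to the shift map), it suffices to prove the lemma for the translation vector field $\partial/\partial t$ on $N\times\RRR$.

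For the translation flow the shift map is completely transparent: $\ShB(\afunc)(n,s)=(n,s+\afunc(n,s))$. First I would show $\imShB=\EB$. The inclusion $\imShB\subset\EB$ is already noted in general; for the converse, given $\hdif\in\EB$, the condition that $\hdif$ preserve every orbit $\{n\}\times\RRR$ means $\hdif(n,s)=(n,\eta(n,s))$ for a smooth $\eta$, and the condition that $\hdif$ be a local diffeomorphism at each point forces $\partial\eta/\partial s>0$ everywhere (it cannot vanish, and by connectedness of each orbit and continuity it keeps a fixed sign; orientation-preservation on $\RRR^2$, or the fixing of $\partial\Mman$ in the half-plane case together with tangency, pins the sign to $+$). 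Hence $\afunc(n,s):=\eta(n,s)-s$ is a smooth shift function and $\hdif=\ShB(\afunc)$. Injectivity of $\ShB$ is immediate since $s+\afunc=s+\afunc'$ implies $\afunc=\afunc'$; thus $\ShB$ is a continuous bijection onto $\EB$ with inverse $\ShB^{-1}(\hdif)(n,s)=\mathrm{pr}_{\RRR}\hdif(n,s)-s$, which is visibly continuous in every $\Wr{r}$ and $\Sr{r}$ topology because it is obtained from $\hdif$ by composition with the fixed smooth maps (projection, subtraction of the coordinate function). The same formulas show both $\ShB$ and $\ShB^{-1}$ preserve smoothness in the sense of Definition~\ref{defn:pres_smoothness}: applying them to a smooth family $H:\Mman\times\RRR^k\to\Mman$ just substitutes into the explicit expressions above, yielding a smooth map on $\Mman\times\RRR^k$.

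The main obstacle is the first step: producing the global conjugacy $\Phi$ to a product flow, and in particular handling the half-plane case $\Mman=\Hman$ cleanly, since there $\partial\Hman$ is itself an orbit and one must check that the diffeomorphism can be taken to respect the boundary and that the resulting statements about $\EB$ (which involves maps fixed on or preserving $\partial\Hman$) transfer correctly. One must also confirm completeness of the flow is compatible with the orbit space being Hausdorff — for $\RRR^2$ and $\Hman$ this is standard since these surfaces admit no non-Hausdorff orbit spaces for complete non-singular flows, but it deserves a sentence. Once the product structure is in hand, the remainder is the routine explicit bookkeeping sketched above, and the smoothness-preservation claims follow with no extra work because every map in sight is built from fixed smooth maps by composition.
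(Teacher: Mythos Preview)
Your global-rectification step is where the argument breaks. It is \emph{not} true that every complete non-singular vector field on $\RRR^2$ has a Hausdorff orbit space, so there is in general no diffeomorphism to a product flow $N\times\RRR$. A concrete counterexample is
\[
\BFld \;=\; \cos(\pi y)\,\dd{x} \;+\; \sin(\pi y)\,\dd{y}
\]
on $\RRR^2$: it has unit length (hence is complete and non-singular), the horizontal lines $\{y=n\}$ are orbits, and every orbit in the strip $0<y<1$ is a Reeb-type curve asymptotic to $\{y=0\}$ and $\{y=1\}$ as $x\to-\infty$, with a rightmost point at some $x_{\max}$. Any saturated open set containing $\{y=0\}$ must contain $(X,0)$ for every $X$, hence some $(X,\delta)$ with $\delta>0$, and the orbit through that point has $x_{\max}>X$; so it contains interior orbits with arbitrarily large $x_{\max}$, and in fact all orbits with $x_{\max}$ beyond some threshold. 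The same holds for any saturated neighbourhood of $\{y=1\}$, so the two cannot be separated and the leaf space is non-Hausdorff. Your assertion that ``these surfaces admit no non-Hausdorff orbit spaces for complete non-singular flows'' is therefore false, and with it the reduction to a translation flow collapses. (A smaller issue: you invoke that $\hdif\in\EB$ is a local diffeomorphism at each point, but the second bullet in the definition of $\EA$ only concerns the singular point $\orig$, and here there is none, so that condition is vacuous and your sign argument for $\partial\eta/\partial s$ is not available---nor is it needed.)

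The paper sidesteps this entirely. It does not attempt a global product structure; instead it observes, via Poincar\'e--Bendixson applied to $\Int\Mman\cong\RRR^2$, that $\BFld$ has no closed orbits and every orbit is non-recurrent. Non-recurrence is strictly weaker than admitting a global cross-section (your Reeb example is non-recurrent but not globally rectifiable), yet it is exactly what is needed: it guarantees that for each $x$ there is a \emph{unique} time $\sigma_{\hdif}(x)$ with $\hdif(x)=\BFlow(x,\sigma_{\hdif}(x))$, and the author then invokes \cite{Maks:LocInv} to get that $\hdif\mapsto\sigma_{\hdif}$ is $\contS{r}{r}$-continuous and preserves smoothness. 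If you want a self-contained argument, the right fix is to work with \emph{local} flow boxes (which always exist) to see that $\sigma_{\hdif}$ is smooth and depends continuously on $\hdif$, rather than to seek a global one.
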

\begin{proof}
Let $\hdif\in\EB$.
Then for each $x\in\Mman$ its image $\hdif(x)$ belongs to the orbit of $x$, whence there exists a unique number $\sigma_{\hdif}(x)$ such that $\hdif(x)=\BFlow(x,\sigma_{\hdif}(x))$.
The obtained shift function $\sigma_{\hdif}:\Mman\to\RRR$ for $\hdif$ is $\Cinf$ on all of $\Mman$ and the correspondence $\hdif\mapsto\sigma_{\hdif}$ is the inverse map $\ShB^{-1}$ of $\ShB$.

Since $\BFld$ has no singular points, and $\Int\Mman$ is homeomorphic with $\RRR^2$, it easily follows from the Poincar\'e-Bendixson theorem, \cite{Palis_deMelu}, that $\BFld$ has no closed orbits and each non-closed orbit of $\BFld$ is non-recurrent.
Then it follows from~\cite{Maks:LocInv} that $\ShB^{-1}$ is $\contS{r}{r}$-continuous for each $r\geq0$ and preserve smoothness.
\end{proof}

\section{Linearization of vector fields}\label{sect:linearization}
Let 
\begin{equation}\label{equ:VF}
\AFld(x)=\AFld_1(x)\frac{\partial}{\partial x_1} + \cdots + \AFld_n(x)\frac{\partial}{\partial x_n} 
\end{equation}
be a smooth vector field on $\RRR^n$ such that $\AFld(\orig)=0$.
Then 
$$
\AFld_i(x) = a_{i1}x_1 + \cdots + a_{in}x_n + o(\|x\|^2), \qquad i=1,\ldots,n,
$$
for some $a_{ij}\in\RRR$.
Regarding $\AFld$ as a map $\AFld=(\AFld_1,\ldots,\AFld_n):\RRR^n\to\RRR^n$, we can write the $1$-jet $\jet\AFld(\orig)$ of $\AFld$ at $\orig$ as 
$
\jet(\AFld)(x) = Ax, 
$ 
where 
$$A=\left(
\begin{matrix}
a_{11} & \cdots & a_{1n} \\
\cdots & \cdots & \cdots \\
a_{n1} & \cdots & a_{nn}
\end{matrix}
\right),
\qquad 
x = 
\left(
\begin{matrix}
x_{1} \\ \cdots \\ x_{n}
\end{matrix}
\right).
$$
The matrix $A$ as well as the corresponding linear map $x\mapsto Ax$ will be denoted by $\nabla\AFld$ and called \myemph{linear part} or \myemph{linearization} of $\AFld$ at $\orig$.

Let $\hdif:(\RRR^n,\orig)\to(\RRR^n,\orig)$ be a germ of a diffeomorphism at $\orig$ and $H=J(\hdif,\orig)$ be its Jacobi matrix of $\hdif$ at $\orig$.
Then $\hdif$ induces the following germ of a vector field 
$$\hdif_{*}\AFld = T\hdif \circ \AFld \circ \hdif^{-1}$$ at $\orig$ called the \myemph{pushforward} of $\AFld$ by $\hdif$.

It easily follows that 
\begin{equation}\label{equ:nabla_for_pushforward}
\nabla \hdif_{*}\AFld = H \cdot \nabla\AFld \cdot H^{-1}.
\end{equation}

\begin{lemma}\label{lm:1jet_TCVF}
Let $\AFld$ be a \TC\ vector field on $\disk$.
Then the eigen values of its linearization $\nabla\AFld$ at $\orig$ are purely imaginary, i.e. $\lambda_{1,2}=\pm ib$ for some $b\in\RRR$.
Hence by change coordinates $\nabla\AFld$ can be reduced to one of the following matrices:
$$
1.~\left(\begin{smallmatrix}
0 & b \\ -b & 0 
\end{smallmatrix}\right),
\qquad
2.~\left(\begin{smallmatrix}
0 & b \\ 0 & 0 
\end{smallmatrix}\right),
\qquad
3.~\left(\begin{smallmatrix}
0 & 0 \\ 0 & 0 
\end{smallmatrix}\right),
$$
for some $b>0$.
Each of these matrices is realizable.
\end{lemma}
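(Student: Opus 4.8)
The plan is to prove the eigenvalue statement first, then the normal-form list, then realizability. For the eigenvalue claim, let $\lambda_{1,2}$ be the eigenvalues of $A=\nabla\AFld$ at $\orig$. A priori these are either both real or a complex-conjugate pair. I would rule out a nonzero real eigenvalue by a hyperbolicity/index argument: if some $\lambda_i$ is real and nonzero, then $\orig$ would be a hyperbolic-type direction and by the Grobman–Hartman-type local structure (or simply by examining the linear flow together with the flat-perturbation control one has near a semisimple nonzero real eigenvalue) there would be orbits entering or leaving $\orig$, or a saddle sector, contradicting (T3) which forces every nearby orbit to be closed. Concretely: a nonzero real eigenvalue produces an orbit whose $\omega$- or $\alpha$-limit set is $\{\orig\}$, so that orbit is not closed. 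Hence both eigenvalues have zero real part. If they were a nonzero real pair they are excluded as above; if they form a conjugate pair $a\pm ib$ then $\mathrm{Re}=a=0$ for the same reason (a spiral source/sink has no closed orbits through nearby points). The remaining possibility is $\lambda_{1,2}=\pm ib$ for some $b\in\RRR$ (allowing $b=0$, the nilpotent/zero cases), which is exactly the assertion.

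Next, the normal forms. Given $\lambda_{1,2}=\pm ib$, the matrix $A$ is, up to real conjugation, determined by its real Jordan form, and \eqref{equ:nabla_for_pushforward} shows that conjugating $A$ by $H=J(\hdif,\orig)$ realizes precisely a linear change of coordinates. If $b\neq 0$ the eigenvalues are distinct, so $A$ is diagonalizable over $\CCC$ and its real canonical form is $\left(\begin{smallmatrix}0&b\\-b&0\end{smallmatrix}\right)$; replacing $b$ by $-b$ if necessary (which is realized by the coordinate swap $(x,y)\mapsto(y,x)$, an admissible linear change) we may take $b>0$, giving case~1. If $b=0$ then $A$ is nilpotent: either $A=0$ (case~3) or $A$ has a single $2\times 2$ Jordan block, whose real form is $\left(\begin{smallmatrix}0&1\\0&0\end{smallmatrix}\right)$, and rescaling one coordinate turns the off-diagonal $1$ into an arbitrary $b>0$, giving case~2. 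These three cases are mutually exclusive (distinguished by rank and by whether the eigenvalues vanish), and exhaust all possibilities.

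Finally, realizability. Case~1 is realized by the linear field $-by\,\dd{x}+bx\,\dd{y}$, equivalently the vector field of Example~\ref{exmp:non-deg-hamvf} rescaled, which is manifestly \TC\ on $\disk$ (all orbits are circles about $\orig$). For cases~2 and~3 one cannot take the literal linear field (a nonzero nilpotent or zero linear field is not \TC), so the point is to exhibit a \TC\ vector field whose $1$-jet at $\orig$ is the prescribed nilpotent or zero matrix. Here I would invoke Example~\ref{exmp:deg-hamvf}: taking $\func=Q_1\cdots Q_n$ with $n\geq 2$ definite quadratic forms in general position gives a Hamiltonian \TC\ vector field $\AFld=-\func'_y\dd{x}+\func'_x\dd{y}$ all of whose first partials vanish to order $\geq 2n-1\geq 3$ at $\orig$, so $\nabla\AFld=0$ at $\orig$; this realizes case~3. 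For case~2, a similar explicit construction with a first integral whose Hessian at $\orig$ has rank $1$ (e.g.\ a product $y\cdot Q(x,y)$ with $Q$ definite, suitably arranged so that $\orig$ is still a topological center on a small disk, then rescaled to $\disk$) yields $\nabla\AFld=\left(\begin{smallmatrix}0&b\\0&0\end{smallmatrix}\right)$ after the coordinate normalization above.

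The main obstacle I anticipate is the eigenvalue step: making the passage from "nonzero real part of some eigenvalue" to "a non-closed orbit exists" fully rigorous requires a little care, since $\AFld$ is only $\Cinf$ and one needs the local flow near a partially hyperbolic singularity. The clean way is to use that a nonzero eigenvalue with nonzero real part gives a $1$-dimensional (strong) stable or unstable manifold through $\orig$ whose points lie on non-closed orbits; alternatively one can quote the topological classification of isolated planar singularities and note that a topological center forces the linearization to have no eigenvalue with nonzero real part. A secondary (but genuinely routine) obstacle is verifying carefully that the constructions in Example~\ref{exmp:deg-hamvf} indeed produce a topological center after restricting to a small disk and rescaling, and computing their $1$-jets; this is a direct calculation with the explicit Hamiltonians and I would not belabor it.
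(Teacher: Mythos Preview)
Your approach to the eigenvalue claim is essentially the paper's: assume some eigenvalue has nonzero real part and produce a non-closed orbit via invariant-manifold theory (the paper invokes Hadamard--Perron when both real parts are nonzero and the center manifold theorem when only one is), contradicting (T3). The normal-form step is routine and matches.

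There is, however, a genuine gap in your realizability argument for case~2. Your proposed example $f=y\cdot Q(x,y)$ with $Q$ a definite quadratic form does \emph{not} work, for two independent reasons. First, $f$ is homogeneous of degree~$3$, so its Hessian at $\orig$ vanishes identically; hence the Hamiltonian field $\AFld=-f'_y\,\partial_x+f'_x\,\partial_y$ has $\nabla\AFld(\orig)=0$, which is case~3, not case~2. Second, and more fatally, $f$ vanishes on the entire $x$-axis, so $\orig$ is not an isolated local extremum of $f$; on the $x$-axis the Hamiltonian field is $-Q(x,0)\,\partial_x\neq0$, giving non-closed orbits through points arbitrarily close to $\orig$, so $\AFld$ is not \TC. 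The phrase ``suitably arranged so that $\orig$ is still a topological center'' cannot be fulfilled for this $f$.

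The paper sidesteps this with a single clean family: take $f(x,y)=\tfrac{b}{2}(x^{2p}+y^{2q})$ and its Hamiltonian field $\AFld=-bq\,y^{2q-1}\partial_x+bp\,x^{2p-1}\partial_y$. Then $\orig$ is an isolated minimum of $f$, so $\AFld$ is \TC, and one reads off $\nabla\AFld(\orig)$ directly: $(p,q)=(1,1)$ gives case~1, $(p,q)=(1,2)$ gives case~2 (up to a coordinate swap), and $p,q\geq2$ gives case~3. This also replaces your more elaborate product-of-quadratics construction for case~3.
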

\begin{proof}
Suppose that $\RE\lambda_1\not=0$.
Then there exists an orbit $\orb$ of $\AFld$ for which $\orig$ is a limit point.

Indeed, if $\RE\lambda_2\not=0$, then existence of $\orb$ follows from Hadamard-Perron's theorem~\cite{Hadamard:BMSF:1901, Perron:MZ:1929, Perron:JRAM:1929, Perron:MZ:1930}, which was reproved and extended by many authors, see~\cite[page 2]{HirschPughShub:LNM} for discussions and references.
Otherwise $\RE\lambda_2=0$, and such an orbit $\orb$ exists by \myemph{center manifold theorem}, e.g.~\cite{Lykova:UMZ:59, Kelley:JDE:67, Carr:CMTh}.

This gives a contradiction with the assumption that all orbits of $\AFld$ are closed.

It remains to present examples of \TC\ vector field with linear parts of types 1-3.
Let $p,q\in\NNN$, $\func(x,y)=\frac{b}{2} (x^{2p}+ y^{2q})$, and 
$$\AFld(x,y) = -bq  y^{2q-1}\dd{x} + bp x^{2p-1}\dd{y}$$ be the Hamiltonian vector field of $\func$.
Since $\orig$ is an isolated local extreme for $\func$, we see obtain that $\AFld$ is a \PTC\ vector field.
Then 
$$
\begin{array}{ll}
\nabla\AFld=
\left(\begin{smallmatrix}
0 & b \\ -b & 0 
\end{smallmatrix}\right), & p=q=1, \\[3mm]
\nabla\AFld=
\left(\begin{smallmatrix}
0 & b \\ 0 & 0 
\end{smallmatrix}\right), & p=1, q=2, \\[3mm]
\nabla\AFld=
\left(\begin{smallmatrix}
0 & 0 \\ 0 & 0 
\end{smallmatrix}\right), & p,q\geq2.
\end{array}
$$
Lemma is completed.
\end{proof}

\begin{lemma}\label{lm:eigen_val_H}
Let $\hdif:(\disk,\orig)\to(\disk,\orig)$ be germ of orbit preserving diffeomorphisms of a \TC\ vector field $\AFld$, i.e. $\hdif(\orb)=\orb$ for each sufficiently small orbit of $\AFld$, $H=J(\hdif,\orig)$ be the Jacobi matrix of $\hdif$ at $\orig$, and $\mu_1,\mu_2$ be the eigen values of $H$.
Then $|\mu_1|=|\mu_2|=1$.
\end{lemma}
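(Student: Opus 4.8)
The plan is to argue by contradiction at the level of eigenvalues: I will show that $H=J(\hdif,\orig)$ cannot have an eigenvalue $\mu$ with $|\mu|\neq 1$. Suppose it does. Since $\hdif$ is a germ of a diffeomorphism, $H$ is invertible, so all its eigenvalues are nonzero, and $\hdif^{-1}$ is again a germ of an orbit preserving diffeomorphism of $\AFld$ at $\orig$ with $J(\hdif^{-1},\orig)=H^{-1}$, whose eigenvalues are the reciprocals of those of $H$. Hence, after possibly replacing $\hdif$ by $\hdif^{-1}$, it suffices to rule out the existence of an eigenvalue $\mu$ of $H$ with $|\mu|>1$.

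Under that assumption I would invoke the Hadamard--Perron theorem, exactly in the spirit of the proof of Lemma~\ref{lm:1jet_TCVF} but now for the diffeomorphism germ $\hdif$ at the fixed point $\orig$: because $H$ has an eigenvalue of modulus $>1$, $\hdif$ possesses a nonconstant local $C^1$ invariant curve $W$ through $\orig$ (the strong unstable manifold attached to that eigenvalue) such that $\hdif^{-n}(p)\to\orig$ as $n\to\infty$ for every $p\in W$ sufficiently close to $\orig$. If the other eigenvalue happens to lie on the unit circle, so that $\orig$ is only partially hyperbolic -- which by Lemma~\ref{lm:1jet_TCVF} can occur -- this is the center-unstable situation and is covered by the same references; alternatively $W$ can be produced directly by a graph-transform contraction.

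Now I pick $p\in W$ with $p\neq\orig$ so close to $\orig$ that the whole backward orbit $\{\hdif^{-n}(p)\}_{n\geq 0}$ stays in the domain of the germ and the orbit $\orb$ of $\AFld$ through $p$ is ``sufficiently small'', hence preserved by $\hdif$; this is legitimate because the periodic orbits of $\AFld$ shrink down to $\orig$ and fill a neighborhood of it. Since $p\neq\orig$ and $\orig$ is the only singular point, by (T3) the orbit $\orb$ is a simple closed curve, so $\orb$ is compact and $\orig\notin\orb$. From $\hdif(\orb)=\orb$ we get $\hdif^{-1}(\orb)=\orb$ and therefore $\hdif^{-n}(p)\in\orb$ for all $n\geq 0$; but $\hdif^{-n}(p)\to\orig$, so $\orig$ is a limit point of $\orb$, contradicting that $\orb$ is closed and avoids $\orig$. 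This contradiction shows that no eigenvalue of $H$ has modulus different from $1$, i.e. $|\mu_1|=|\mu_2|=1$.

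I expect the only delicate point to be the precise form of the invariant manifold statement, since $\orig$ need not be a hyperbolic fixed point of $\hdif$; but all that is needed is the existence of a single nontrivial trajectory converging to $\orig$ under backward iteration whenever $H$ has an eigenvalue of modulus $>1$, which is standard. As an independent sanity check, the same mechanism already yields $|\det H|=1$: for every small orbit $\orb$ the closed disk $D_\orb$ it bounds satisfies $\hdif(D_\orb)=D_\orb$, so $\int_{D_\orb}|\det J(\hdif,\cdot)|=\mathrm{area}(D_\orb)$, and letting $D_\orb$ shrink to $\orig$ gives $|\det H|=|\mu_1\mu_2|=1$ -- consistent with, though weaker than, the assertion $|\mu_1|=|\mu_2|=1$.
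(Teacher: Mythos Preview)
Your argument is correct and follows essentially the same route as the paper's proof: assume an eigenvalue of modulus $\neq 1$, invoke invariant/center manifold theory for the diffeomorphism germ $\hdif$ at the fixed point $\orig$ to produce a point $z\neq\orig$ whose $\hdif$-iterates accumulate at $\orig$, and contradict the fact that the $\AFld$-orbit $\orb$ of $z$ is a compact closed curve with $\orig\notin\orb$ while $\hdif(\orb)=\orb$. The paper's version is terser (it simply cites the center manifold theorem to obtain $\orig\in\overline{\cup_{n\in\ZZZ}\hdif^n(z)}$ without first normalizing to $|\mu|>1$), whereas you spell out the reduction to $|\mu|>1$ via $\hdif\mapsto\hdif^{-1}$ and add the pleasant area argument giving $|\det H|=1$; but the underlying idea is the same.
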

\begin{proof}
If $|\mu_1|\not=1$, then by center manifold theorem, e.g.~\cite{Carr:CMTh}, there exists a point $z\in\disko$ such that 
\begin{equation}\label{equ:orig_in_orbit_hz}
\orig \in \overline{ \mathop{\cup}\limits_{n=-\infty}^{+\infty}\hdif^n(z) }.
\end{equation}
Let $\orb$ be the orbit of $z$.
By assumption $\hdif(\orb)=\orb$, whence~\eqref{equ:orig_in_orbit_hz} implies that $z=\orig$, which contradicts to the assumption.
\end{proof}

\section{``Collinear'' linear maps}
The aim of this section is to establish the following statement.
\begin{proposition}\label{pr:collin-conjug-lin-maps}
Let $V$ be a linear space over a field $\mathbb{F}$, $\dim V=n\geq2$, and $A,B:V\to V$ be two linear maps such that 
\begin{enumerate}
 \item[\rm(i)]
$B = H A H^{-1}$ for some linear isomorphism $H:V\to V$;
 \item[\rm(ii)] 
for each $x\in V$ its images $A(x)$ and $B(x)$ are collinear, i.e. there exists $\mu_x\in\mathbb{F}$ depending on $x$ such that $B(x) = \mu_x A(x)$.
\end{enumerate}
Denote by $\spectr A$ the spectrum of $A$.
Then the following statements hold true:
\begin{enumerate}
 \item[\rm(A1)]
If $\rank A\geq2$, then $B= \tau A$ for some $\tau\in\mathbb{F}$.
In particular, $$A H \;=\; \tau H A.$$

\item[\rm(A2)]
Suppose $\rank A=1$ and $\spectr A=\{\lambda,0\}$ for some $\lambda\not=0$. Then there exists a basis in $V$ in which 
\begin{equation}\label{equ:A_nonzero_spectrum_B}
A= \left(\begin{smallmatrix}
\lambda &  0 & \cdots & 0\\
0   &  0  & \cdots  & 0\\
\cdots & \cdots &  \cdots & \cdots  \\
0   &  0  & \cdots  & 0
\end{smallmatrix}\right),
\qquad 
B = \left(\begin{smallmatrix}
\lambda &  b_2   & \cdots & b_n \\
    0   &  0     & \cdots & 0 \\
\cdots  & \cdots & \cdots & \cdots \\
0   &  0  & \cdots  & 0
\end{smallmatrix}\right),
\end{equation}
for some $b_2,\ldots,b_n\in\mathbb{F}$.
Denote 
\begin{equation}\label{equ:A_nonzero_spectrum_G1}
G_1= \left(\begin{smallmatrix}
1 & b_2/\lambda & b_3/\lambda & \cdots & b_n/\lambda \\
0 & 1/\lambda   & 0                    & \cdots & 0  \\
0 & 0           & 1/\lambda            & \cdots & 0  \\
\cdots & \cdots &  \cdots  & \cdots & \cdots \\
0 &       0     & 0                             & \cdots &  1/\lambda
\end{smallmatrix}\right).
\end{equation}
Then $B = G_1 B = A G_1$, whence 
$$A \cdot (H G_1) \;=\; (H G_1) \cdot A,$$
i.e. $A$ commutes with $HG_1$.

\item[\rm(A3)]
Suppose $\rank A=1$ and $\spectr A=\{0\}$.
Then there exists a basis in $V$ in which 
\begin{equation}\label{equ:A_zero_spectrum_B}
A= \left(\begin{smallmatrix}
0   &  1 & \cdots & 0\\
0   &  0  & \cdots  & 0\\
\cdots & \cdots &  \cdots & \cdots  \\
0   &  0  & \cdots  & 0
\end{smallmatrix}\right),
\qquad 
B = \left(\begin{smallmatrix}
0   &   q  & b_3  & \cdots & b_n \\
0   &   0  & 0 & \cdots & 0 \\
\cdots     & \cdots & \cdots & \cdots & \cdots \\
0   &  0   & 0 & \cdots  & 0
\end{smallmatrix}\right),
\end{equation}
for some $q,b_3,\ldots,b_n\in\mathbb{F}$ such that $q\not=0$.
Denote 
\begin{equation}\label{equ:A_zero_spectrum_G2}
G_2= \left(\begin{smallmatrix}
1 & 0     & 0 & \cdots & 0 \\
0 & q & b_3 & \cdots & b_n  \\
0 & 0           & 1/q     & \cdots & 0  \\
\cdots & \cdots &  \cdots  & \cdots & \cdots \\
0 &       0     & 0        & \cdots &  1/q
\end{smallmatrix}\right),
\end{equation}
Then $B = G_2 B = A G_2$, whence 
$$A \cdot (H G_2) \;=\; (H G_2) \cdot A,$$
i.e. $A$ commutes with $HG_2$.
\end{enumerate}
\end{proposition}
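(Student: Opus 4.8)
\emph{Approach.} The plan is to forget the dynamical origin of the statement and argue in pure linear algebra: extract from hypothesis~(ii) a normal form for the pair $(A,B)$ and then feed it into the conjugacy relation~(i). Observe first that (ii) says precisely that $A(x)$ and $B(x)$ span a subspace of dimension $\le1$ for every $x\in V$. First I would isolate the following \emph{spreading lemma}: if $A(u)$ and $A(w)$ are linearly independent, then $B(u)$, $B(w)$ and $B(u{+}w)$ are the \emph{same} scalar multiple of $A(u)$, $A(w)$ and $A(u{+}w)$ respectively. Indeed, since $A(u),A(w)\neq0$, (ii) yields $B(u)=\alpha A(u)$, $B(w)=\beta A(w)$, $B(u{+}w)=\gamma\bigl(A(u)+A(w)\bigr)$; comparing with $B(u{+}w)=B(u)+B(w)$ and using independence of $A(u),A(w)$ forces $\alpha=\beta=\gamma$. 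This one line is the only real idea; everything below is bookkeeping around it.

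\emph{Proof of (A1).} Suppose $\rank A\ge2$. For $x\notin\ker A$ define $\mu_x\in\mathbb{F}$ by $B(x)=\mu_x A(x)$. Given $x,y\notin\ker A$: if $A(x),A(y)$ are independent, the spreading lemma gives $\mu_x=\mu_y$; if they are dependent, choose $z$ with $A(z)\notin\mathrm{span}\,A(x)$ (possible since $\rank A\ge2$) and apply the previous case to $(x,z)$ and to $(y,z)$. So $\mu_x\equiv\tau$ is a constant, and $B$ and $\tau A$ agree on $V\setminus\ker A$; since $A\neq0$ the subspace $\ker A$ is proper and its complement spans $V$, hence $B=\tau A$ on all of $V$. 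As $A\neq0$ we also have $B=HAH^{-1}\neq0$, so $\tau\neq0$, and substituting $B=\tau A$ into $B=HAH^{-1}$ gives the commutation relation between $A$ and $H$ asserted in (A1).

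\emph{The rank-one cases.} Assume now $\rank A=1$ and $\im A=\mathrm{span}(v)$ with $v\neq0$; since $\rank B=\rank A=1$ as well, I first claim $\im B=\im A$. For $x\notin\ker A$, (ii) together with $A(x)\in\mathrm{span}(v)\setminus\{0\}$ gives $B(x)\in\mathrm{span}(v)$; for $x\in\ker A$ pick any $y\notin\ker A$, note $x+ty\notin\ker A$ for $t\neq0$, so $B(x)+tB(y)=B(x{+}ty)\in\mathrm{span}(v)$, and $B(y)\in\mathrm{span}(v)$ then forces $B(x)\in\mathrm{span}(v)$. Thus $B(V)\subseteq\im A$, and equality follows by dimension. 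In case (A2), where $\spectr A=\{\lambda,0\}$ with $\lambda\neq0$: a rank-one endomorphism satisfies $A^{2}=\mathrm{tr}(A)\,A=\lambda A$, so $A(A-\lambda I)=0$ and $A$ (and likewise $B$) is diagonalizable with $V=\im A\oplus\ker A$ and image equal to its $\lambda$-eigenspace; in particular $Av=Bv=\lambda v$. Put $e_1=v$ and let $e_2,\dots,e_n$ be any basis of $\ker A$: then $A$ has the left-hand matrix of~\eqref{equ:A_nonzero_spectrum_B}, and since $B$ maps $V$ into $\mathrm{span}(e_1)$ with $Be_1=\lambda e_1$, it has the right-hand matrix, $b_j$ being the $e_1$-coordinate of $Be_j$. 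A direct multiplication with $G_1$ from~\eqref{equ:A_nonzero_spectrum_G1} verifies $G_1B=B=AG_1$, and combining with $B=HAH^{-1}$ shows $A(HG_1)=(HG_1)A$.

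\emph{Proof of (A3) and the main obstacle.} Here $\spectr A=\{0\}$, so $\mathrm{tr}(A)=0$ and $A^{2}=\mathrm{tr}(A)A=0$; thus $A$ is nilpotent of rank one (one $2{\times}2$ Jordan block, the rest trivial) and $\im A\subseteq\ker A$. Take $e_1=v$ with $\mathrm{span}(v)=\im A$ and choose $e_2^{0}$ with $Ae_2^{0}=e_1$, so every vector sent by $A$ to $e_1$ has the form $e_2^{0}+k$, $k\in\ker A$. Not all of these are annihilated by $B$: if $B(e_2^{0}+k)=0$ for all $k$, then $k=0$ and subtraction give $B|_{\ker A}=0$, hence $B=0$ on $V=\mathrm{span}(e_2^{0})\oplus\ker A$, a contradiction; so pick $e_2$ among them with $Be_2\neq0$, and then $Be_2=qe_1$ with $q\neq0$ by the image computation. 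Completing the basis with $e_3,\dots,e_n$ spanning a complement of $\mathrm{span}(e_1)$ in $\ker A$, the matrices of $A$ and $B$ are as in~\eqref{equ:A_zero_spectrum_B} (the first column of $B$ vanishes because $Be_1\in\mathrm{span}(e_1)$ and $B^{2}=0$ force $Be_1=0$, while $b_j$ is the $e_1$-coordinate of $Be_j$), and a direct computation with $G_2$ from~\eqref{equ:A_zero_spectrum_G2} gives $G_2B=B=AG_2$, hence $A(HG_2)=(HG_2)A$. The two places where care is needed are the claim $\im B=\im A$ in the rank-one cases — the collinearity hypothesis is \emph{a priori} empty on $\ker A$, yet still constrains $B$ there through linearity — and, in (A3), the choice of $e_2$ making $q\neq0$; the spreading lemma, the diagonalizations, and the four identities $G_iB=B=AG_i$ are entirely routine.
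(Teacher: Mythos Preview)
Your proof is correct and follows essentially the same route as the paper's: your ``spreading lemma'' is the paper's Lemma~4.3(b), your argument that $\im B=\im A$ in the rank-one case is a cleaner version of what the paper does via conjugacy plus Lemma~4.3(a), and your contradiction argument for $q\neq0$ in (A3) is equivalent to the paper's appeal to Lemma~4.2. The only cosmetic difference is that you use the rank-one identity $A^{2}=\mathrm{tr}(A)\,A$ to read off diagonalizability/nilpotency, whereas the paper simply invokes Jordan normal form; and, like the paper, you pass from $G_iB=B=AG_i$ and $B=HAH^{-1}$ to the commutation relation without writing the one-line conjugacy computation (which in fact yields commutation of $A$ with $G_iH$, as in the paper's proof).
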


We need two lemmas.
Let $V,W$ be two linear spaces over a field $\mathbb{F}$.
For $w\in W$ denote by $$\langle w \rangle:=\{tw \ : \ t\in \mathbb{F}\}$$ the one-dimensional subspace in $W$ spanned by $w$.

\begin{lemma}\label{lm:ex_x_Ax_Bx_n0}
For any two non-zero linear operators $A,B: V\to W$ there exists $x\in V$ such that $A(x)\not=0$ and $B(x)\not=0$.
\end{lemma}
\begin{proof}
Suppose that for each $x\in V$ at least one of vectors $Ax$ or $Bx$ is zero.
Then $\ker A + \ker B =V$.
Since $A,B\not=0$, there exists $x\in \ker B\setminus \ker A$ and $y\in \ker A\setminus \ker B$.
In particular, $x,y$ are linearly independent, 
\begin{equation}\label{equ:AxBxAyBy}
A(x)\not=0,  \qquad B(y)\not=0, \qquad A(y)=B(x)=0.
\end{equation}
But $A(x+y) = A(x)$, $B(x+y) = B(y)$ and by assumption at least one of these vectors must be zero, which contradicts to~\eqref{equ:AxBxAyBy}.
\end{proof}

\begin{lemma}\label{lm:collin_AB_V_W}
Let $A,B:V \to W$ be two linear operators such that for each $x\in V$ the vectors $A(x)$ and $B(x)$ are collinear (possibly zero).
\begin{enumerate}
 \item[\rm(a)]
If $B(x)=\tau A(x) \not=0$ for some $x\in V$ and $\tau\in\mathbb{F}$, then
$$
A(\ker B)\subset \langle A(x) \rangle,
\qquad
B(\ker A)\subset \langle B(x) \rangle.
$$
\item[\rm(b)]
If $B(x)=\tau A(x) \not=0$, $B(y)=\nu A(y) \not=0$ for some $x,y\in V$ and $\tau,\nu\in\mathbb{F}$ such that $A(x)$ and $A(y)$ are linearly independent in $W$, then $\tau=\nu$.
Moreover, in this case $B=\tau A$.
\end{enumerate}
\end{lemma}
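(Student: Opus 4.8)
The plan is to argue directly from collinearity, splitting into the two cases (a) and (b) and using Lemma \ref{lm:ex_x_Ax_Bx_n0} where a vector moved nontrivially by both operators is needed.

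For part (a), suppose $B(x)=\tau A(x)\neq 0$. Take any $v\in\ker B$. I want to show $A(v)\in\langle A(x)\rangle$. Consider the vector $x+v$: we have $B(x+v)=B(x)=\tau A(x)$, which is nonzero, and $A(x+v)=A(x)+A(v)$. By the collinearity hypothesis applied to $x+v$, the vectors $A(x+v)$ and $B(x+v)$ are collinear, and since $B(x+v)=\tau A(x)\neq 0$, this forces $A(x+v)=A(x)+A(v)$ to lie in $\langle A(x)\rangle$; hence $A(v)\in\langle A(x)\rangle$. This proves $A(\ker B)\subset\langle A(x)\rangle$. The inclusion $B(\ker A)\subset\langle B(x)\rangle$ is symmetric: for $w\in\ker A$, apply collinearity to $x+w$, noting $A(x+w)=A(x)\neq 0$ and $B(x+w)=B(x)+B(w)=\tau A(x)+B(w)$, so this last vector must be collinear with $A(x)$, and since $B(x)=\tau A(x)$ it follows $B(w)\in\langle A(x)\rangle=\langle B(x)\rangle$.

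For part (b), assume $B(x)=\tau A(x)\neq 0$ and $B(y)=\nu A(y)\neq 0$ with $A(x),A(y)$ linearly independent. Look at $x+y$. Then $A(x+y)=A(x)+A(y)\neq 0$ (linear independence), and $B(x+y)=\tau A(x)+\nu A(y)$. By collinearity, $B(x+y)=\lambda\bigl(A(x)+A(y)\bigr)$ for some scalar $\lambda$; comparing coefficients in the linearly independent pair $A(x),A(y)$ gives $\tau=\lambda=\nu$. For the last assertion $B=\tau A$: let $z\in V$ be arbitrary. By the collinearity hypothesis $B(z)=\mu_z A(z)$ for some scalar $\mu_z$ (taken $0$ if $A(z)=0$, and then $B(z)=0$ too since $B(z)$ is collinear with $A(z)=0$... more carefully, if $A(z)=0$ I must also show $B(z)=0$). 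To handle $A(z)=0$: then $z\in\ker A$, and by part (a) applied with the pair $(x,\tau)$, $B(z)\in\langle B(x)\rangle=\langle A(x)\rangle$, while applying (a) with the pair $(y,\nu)$ gives $B(z)\in\langle A(y)\rangle$; since $A(x),A(y)$ are independent, $B(z)=0=\tau A(z)$. If instead $A(z)\neq 0$, then either $A(z),A(x)$ are independent — apply part (b) to the pair $z,x$ to get $\mu_z=\tau$, so $B(z)=\tau A(z)$ — or $A(z)\in\langle A(x)\rangle$, in which case $A(z),A(y)$ are independent (as $A(x),A(y)$ are), and applying part (b) to the pair $z,y$ gives $\mu_z=\nu=\tau$, hence again $B(z)=\tau A(z)$. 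In all cases $B(z)=\tau A(z)$, so $B=\tau A$.

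I expect no serious obstacle here; the only mild subtlety is the bookkeeping in the final step of (b), where one must separately treat vectors $z$ with $A(z)=0$ and those with $A(z)$ collinear to $A(x)$, in each case pairing $z$ with whichever of $x,y$ yields a linearly independent pair so that part (b)'s comparison argument applies. Lemma \ref{lm:ex_x_Ax_Bx_n0} guarantees that the hypotheses of (b) are not vacuous when $A,B\neq 0$, which is what makes this lemma usable in the proof of Proposition \ref{pr:collin-conjug-lin-maps}.
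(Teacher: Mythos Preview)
Your proof is correct and follows essentially the same approach as the paper's: both argue part~(a) by considering $x+v$ for $v\in\ker B$, prove $\tau=\nu$ in~(b) by comparing coefficients at $x+y$, and then extend to all $z$ by pairing $z$ with whichever of $x,y$ gives an independent pair of images. One small omission: in the final step of~(b) you apply the first claim of~(b) to the pair $(z,x)$ or $(z,y)$, which requires $B(z)\neq 0$; you verified $A(z)=0\Rightarrow B(z)=0$ via part~(a), but the converse (also immediate from part~(a), using both $x$ and $y$) is what actually guarantees $B(z)\neq 0$ when $A(z)\neq 0$ --- the paper records this explicitly as ``$A(z)=0$ if and only if $B(z)=0$''.
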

\begin{proof}
(a) Evidently, it suffices to show that \myemph{if $B(z)=0$ for some $z\in V$, then $A(z)=\kappa A(x)$ for some $\kappa\in\mathbb{F}$.}

By assumption $B(x+z)=\alpha A(x+z)$ for some $\alpha\in\mathbb{F}$.
Then 
$$
A(x) + A(z) = A(x+z)= \alpha B(x+z)= \alpha B(x) + \alpha B(z) =\alpha\kappa A(x),
$$
whence $A(z) = (\alpha \kappa -1) A(x)$.

(b)
By assumption $B(x+y)=\beta A(x+y)$ for some $\beta\in\mathbb{F}$.
Then 
$$
A(x) + A(y) = A(x+y)= \beta B(x+y)= \beta B(x) + \beta B(y) =\beta \tau A(x) + \beta \nu A(y),
$$
Since $A(x)$ and $A(y)$ are linearly independent, we obtain that $\beta\not=0$, and $\beta\tau=\beta\nu=1$, whence $\tau=\nu$.

It remains to show that $B(z)=\tau A(z)$ for any other $z\in V$.

If $B(z)=0$, then we get from (a) that $A(z) \in \langle A(x)\rangle \cap  \langle A(y)\rangle = \{0\}$.
By symmetry we obtain that $A(z)=0$ if and only if $B(z)=0$.
In particular, $B(z)=\tau A(z)=0$ for such $z$.

Suppose $A(z)\not=0$.
Then $B(z)\not=0$ as well, and either $A(x),A(z)$ or $A(y),A(z)$ are linearly independent.
Then as just proved $B(z)=\tau A(z)$.
\end{proof}

\subsection*{Proof of Proposition~\ref{pr:collin-conjug-lin-maps}}
(A1)
By Lemma~\ref{lm:ex_x_Ax_Bx_n0} there exists $x\in V$ such that $A(x)=\tau B(x)\not=0$ for some $\tau\in\mathbb{F}$.
Since $\rank A\geq2$, there exist $y\in V$ such that $A(x)$ and $A(y)$ are linearly independent.
Then by (b) of Lemma~\ref{lm:collin_AB_V_W} $A=\tau B$.

(A2), (A3)
It is easy to verify that if $A$ and $B$ are given either  by~\eqref{equ:A_nonzero_spectrum_B} or by ~\eqref{equ:A_zero_spectrum_B}.
Then $B=G_i B = AG_i$, where $G_i$ is given by the corresponding formula~\eqref{equ:A_nonzero_spectrum_G1} or~\eqref{equ:A_zero_spectrum_G2}.
Hence $B=HAH^{-1}=G_i^{-1} A G_i$ and therefore $(G_i H) A = A (G_i H)$.

We have to establish existence of representations~\eqref{equ:A_nonzero_spectrum_B} and~\eqref{equ:A_zero_spectrum_B}.

Suppose $\rank A=1$.
Then by Jordan normal form theorem that there exists a basis $\langle e_1,\ldots,e_n\rangle$ in $V$ in which $A$ is given by the corresponding matrix~\eqref{equ:A_nonzero_spectrum_B} or~\eqref{equ:A_zero_spectrum_B}.
Thus in both cases the image of $A$ is spanned by vector $e_1$.

Notice that by (A1) $\rank B=1$ and by (i) $\spectr  B=\spectr A$.
Then it follows from (ii) that the image of $B$ is also spanned by $e_1$.
Hence $B$ is given by the corresponding matrix~\eqref{equ:A_nonzero_spectrum_B} or~\eqref{equ:A_zero_spectrum_B}.

It remains to show that \myemph{we may assume in~\eqref{equ:A_zero_spectrum_B}  that $q\not=0$}.

Indeed, by Lemma~\ref{lm:ex_x_Ax_Bx_n0} there exists a vector $e_2\in V$ such that $B(e_2)= q A(e_2) \not=0$.

Put $e_1=A(e_2)$.
Since $\rank A=1$, it follows that $e_1$ generate the image of $A$, that is for each $y\in V$ there is $\alpha_{y}\in\mathbb{F}$ such that $A(y)=\alpha_{y} e_1$.
In particular, $e_1$ is eigen vector for $A$.
But $\spectr A=\{0\}$, whence $A(e_1)=0$.

Moreover, by (b) of Lemma~\ref{lm:collin_AB_V_W} we also have that $\rank B=1$ and from assumption (i) we get $\spectr B=\spectr A=\{0\}$.
Hence $B(e_1)=0$.
Extends the vectors $e_1,e_2$ to the basis 
$\langle e_1,e_2,f_3,\ldots,f_n\rangle$ of $V$.
Then for each $i=3,\ldots,n$ there exists $\alpha_i\in\mathbb{F}$ such that $A(f_i) = \alpha_i e_1$.
Put $e_i = f_i - \alpha_i e_2$.
Then $\langle e_1,e_2,e_3,\ldots,e_n\rangle$ is also a basis for $V$ and 
$$
A(e_i) = A(f_i) - A(\alpha_i e_2) = \alpha_i e_1 - \alpha_i e_1=0, \qquad i=3,\ldots,n.
$$
Thus in the basis $\langle e_1,\ldots,e_n\rangle$ the matrix $A$ is given by~\eqref{equ:A_zero_spectrum_B}.

Moreover, it follows from (ii) that $B(e_i)=b_i e_1$ for $i=3,\ldots,n$.
Thus in this basis $B$ is given by matrix of the form~\eqref{equ:A_zero_spectrum_B} with $q\not=0$.

Proposition~\ref{pr:collin-conjug-lin-maps} is completed.

\section{Jacobi matrices of orbit preserving diffeomorphisms} \label{sect:Jacobi_matr}
\begin{theorem}\label{th:AH_HA}
Let $\AFld$ be a vector field of $\RRR^n$ given by~\eqref{equ:VF}.
Suppose that the set $\singA$ of zeros of $\AFld$ is nowhere dense near $\orig$.
Let also $\hdif:(\RRR^n,\orig)\to(\RRR^n,\orig)$ be a germ of diffeomorphisms at $\orig$ preserving foliation by orbits of $\AFld$, i.e. there exists a neighbourhood $\Vman$ of $\orig$ such that $\hdif(\orb\cap\Vman)$ is contained in some (perhaps another) orbit $\orb'$ of $\AFld$.
Denote $$A=\nabla\AFld, \qquad  H=J(\hdif,\orig).$$

If $\rank A \geq2$ then there exist $\tau\in\RRR$ such that
$$
AH= \tau HA.
$$

Suppose $\rank A=1$, so in some local coordinates $A$ is given by one of the matrices~\eqref{equ:A_nonzero_spectrum_B} or~\eqref{equ:A_zero_spectrum_B}.
Then $$ A\cdot(HG_i) = (HG_i)\cdot A,$$
where $G_i$, $(i=1,2)$, is given by the corresponding matrices~\eqref{equ:A_nonzero_spectrum_G1} or~\eqref{equ:A_zero_spectrum_G2}.
\end{theorem}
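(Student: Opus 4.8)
The plan is to derive the theorem from Proposition~\ref{pr:collin-conjug-lin-maps} applied, over the field $\RRR$ and with $V=\RRR^n$, to the two linear maps $A=\nabla\AFld$ and $B=\nabla(\hdif_{*}\AFld)$. We may assume $n\geq2$ (for $n=1$ all matrices in sight are scalars and the statement is immediate). Hypothesis~(i) of the Proposition holds for free: by~\eqref{equ:nabla_for_pushforward} one has $B=HAH^{-1}$ with $H=J(\hdif,\orig)$. Everything rests on checking hypothesis~(ii), that $A(x)$ and $B(x)$ are collinear for every $x\in\RRR^n$.

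First I would prove the ``nonlinear'' version: $\AFld(y)$ and $(\hdif_{*}\AFld)(y)$ are collinear for every $y$ in a small enough neighbourhood of $\orig$. Fix such a $y$ with $\AFld(y)\neq0$ and set $z=\hdif^{-1}(y)$. If $\AFld(z)=0$ then $(\hdif_{*}\AFld)(y)=T_z\hdif(\AFld(z))=0$ and collinearity is trivial; so assume $\AFld(z)\neq0$, i.e. the orbit $\orb$ of $z$ is a regular curve. By hypothesis $\hdif(\orb\cap\Vman)$ lies in some orbit $\orb'$ of $\AFld$, and since $y=\hdif(z)\in\orb'$ and $\AFld(y)\neq0$, this $\orb'$ is a $1$-dimensional submanifold near $y$. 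The curve $t\mapsto\hdif(\AFlow(z,t))$ lies in $\orb'$ for $|t|$ small, passes through $y$ at $t=0$, and has velocity $T_z\hdif(\AFld(z))=(\hdif_{*}\AFld)(y)$ there; hence $(\hdif_{*}\AFld)(y)\in T_y\orb'$, which is the line $\RRR\cdot\AFld(y)$. Thus collinearity holds on $\{\AFld\neq0\}$, a dense subset of a neighbourhood of $\orig$ because $\singA$ is nowhere dense there. Consequently each $2\times2$ minor $m_{ij}(y)=\AFld_i(y)(\hdif_{*}\AFld)_j(y)-\AFld_j(y)(\hdif_{*}\AFld)_i(y)$ is a smooth function vanishing on a dense set, hence vanishes identically near $\orig$.

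Next I would extract the linear parts. Since $\AFld_i(\orig)=(\hdif_{*}\AFld)_i(\orig)=0$ with linear parts $(Ay)_i$ and $(By)_i$ respectively, the degree-$2$ homogeneous component of the (identically zero) Taylor expansion of $m_{ij}$ at $\orig$ equals $(Ay)_i(By)_j-(Ay)_j(By)_i$, so this quadratic form vanishes on all of $\RRR^n$. Equivalently, all $2\times2$ minors of the $2\times n$ matrix with rows $Ay$ and $By$ vanish, i.e. $A(y)$ and $B(y)$ are collinear for every $y$ — this is hypothesis~(ii). Now Proposition~\ref{pr:collin-conjug-lin-maps} applies: if $\rank A\geq2$, conclusion~(A1) gives $B=\tau A$, hence $AH=\tau HA$, for some $\tau\in\RRR$; if $\rank A=1$, then $\spectr A$ is either $\{\lambda,0\}$ with $\lambda\neq0$, in which case conclusion~(A2) gives a basis in which $A,B$ take the form~\eqref{equ:A_nonzero_spectrum_B} and $A$ commutes with $HG_1$, or $\spectr A=\{0\}$, in which case conclusion~(A3) gives a basis in which $A,B$ take the form~\eqref{equ:A_zero_spectrum_B} and $A$ commutes with $HG_2$. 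A linear change of basis conjugates $A$, $B$ and $H$ simultaneously, so it preserves both $B=HAH^{-1}$ and collinearity; hence these conclusions hold in the stated coordinates. This is exactly the assertion of the theorem.

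The main obstacle is the collinearity of $\AFld(y)$ and $(\hdif_{*}\AFld)(y)$: the crucial observation is that both vectors are tangent to one and the same one-dimensional orbit of $\AFld$ through $y$; the remaining work is bookkeeping over the singular set $\singA$ (handled by density and continuity of the minors) and the routine extraction of the quadratic Taylor coefficient. Once these are in place, the theorem is a direct translation of Proposition~\ref{pr:collin-conjug-lin-maps}.
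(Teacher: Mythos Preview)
Your proof is correct and follows essentially the same route as the paper: define $B=\nabla(\hdif_*\AFld)=HAH^{-1}$, show that $\AFld$ and $\hdif_*\AFld$ are collinear at regular points (hence, by density and continuity of the $2\times2$ minors, everywhere near $\orig$), extract the quadratic Taylor part to get that $A(x)$ and $B(x)$ are collinear for all $x$, and invoke Proposition~\ref{pr:collin-conjug-lin-maps}. Your treatment is in fact a bit more careful than the paper's (you explicitly handle the density argument and the simultaneous conjugation under a change of basis), but the substance is identical.
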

\begin{proof}
Consider the pushforward of $\AFld$ via $\hdif$, i.e. 
\begin{equation}\label{equ:pushforward_F}
\BFld=\hdif_{*}\AFld = T\hdif \circ \AFld \circ \hdif^{-1}.
\end{equation}
Let $\BFld=(\BFld_1,\ldots,\BFld_n)$ be the coordinate functions of $\BFld$.
Then we can write 
$$
\BFld_i(x) = b_{i1}x_1 + \cdots + b_{in}x_n + o(\|x\|^2), \qquad i=1,\ldots,n,
$$
and thus $\jet(\BFld)(x)=Bx$, where 
$$B=\left(
\begin{matrix}
b_{11} & \cdots & b_{1n} \\
\cdots & \cdots & \cdots \\
b_{n1} & \cdots & b_{nn}
\end{matrix}
\right).
$$
As noted in~\eqref{equ:nabla_for_pushforward}
$$
B=H A H^{-1}.
$$

Since $\hdif$ maps every orbit of $\AFld$ into itself, it follows that $\AFld$ and $\BFld$ are collinear at each $z$ such that $\AFld(z)\not=0$.
Hence
$$
\AFld_i(z) \, \BFld_j(z) \;=\; \AFld_j(z) \, \BFld_i(z), \qquad i,j=1,\ldots,n.
$$
Therefore
\begin{multline*}
\bigl( a_{i1}x_1 + \cdots + a_{in}x_n + o(\|x\|^2) \bigr)
\bigl( b_{j1}x_1 + \cdots + b_{jn}x_n + o(\|x\|^2) \bigr)= \\ =
\bigl( a_{j1}x_1 + \cdots + a_{jn}x_n + o(\|x\|^2) \bigr)
\bigl( b_{i1}x_1 + \cdots + b_{in}x_n + o(\|x\|^2) \bigr),
\end{multline*}
whence
\begin{multline*}
\bigl( a_{i1}x_1 + \cdots + a_{in}x_n \bigr)
\bigl( b_{j1}x_1 + \cdots + b_{jn}x_n \bigr)= \\ =
\bigl( a_{j1}x_1 + \cdots + a_{jn}x_n \bigr)
\bigl( b_{i1}x_1 + \cdots + b_{in}x_n \bigr), \qquad i,j=1,\ldots,n.
\end{multline*}
This imlplies that $A(v)$ and $B(v)$ are collinear for each tangent vector $v\in T_{\orig}\RRR^n$.
Now the statement of Theorem~\ref{th:AH_HA} follows from Proposition~\ref{pr:collin-conjug-lin-maps}.
\end{proof}

\begin{corollary}\label{cor:j1_of_orb_pres_dif}
Let $\AFld$ be a \TC\ vector field, $\AFlow:\disk\times\RRR\to\disk$ be the flow of $\AFld$, $\hdif:\disk\to\disk$ be a germ of a diffeomorphism at $\orig$ preserving orbits of $\AFld$, i.e. $\hdif(\orb)=\orb$ for every orbit of $\AFld$.
Denote by $H=J(\hdif,\orig)$ the Jacobi matrix of $\hdif$ at $\orig$.
Suppsoe that $A=\nabla\AFld \not=0$.

{\rm(1)}~If $A=\left(\begin{smallmatrix}
0 & b \\-b & 0
\end{smallmatrix}\right)$ for some $b\not=0$, then $H$ coincides with one of the following matrices:
\begin{equation}\label{equ:H_A_nondeg}
\left(\begin{smallmatrix}
\cos b\omega & \sin b\omega \\ -\sin b\omega & \cos b\omega
\end{smallmatrix}\right),
\qquad
\left(\begin{smallmatrix}
\cos b\omega & \sin b\omega \\ \sin b\omega & -\cos b\omega
\end{smallmatrix}\right)
\end{equation}
for some $\omega\in\RRR$.
Hence in the first case $j^1\hdif(\orig)=j^1\AFlow_{\omega}(\orig)$, i.e. the linear parts of $\hdif$ and $\AFlow_{\omega}$ at $\orig$ coincide.

{\rm(2)}~If $A=\left(\begin{smallmatrix}
0 & b \\ 0 & 0
\end{smallmatrix}\right)$ for some $b\not=0$, then $H$ coincides with one of the following matrices:
\begin{equation}\label{equ:H_A_semideg}
\left(\begin{smallmatrix}
1 & b\omega \\ 0 & 1
\end{smallmatrix}\right),
\qquad
\left(\begin{smallmatrix}
-1 & b\omega \\ 0 & -1
\end{smallmatrix}\right),
\qquad
\left(\begin{smallmatrix}
-1 & b\omega \\ 0 & 1
\end{smallmatrix}\right),
\qquad
\left(\begin{smallmatrix}
1 & b\omega \\ 0 & -1
\end{smallmatrix}\right)
\end{equation}
for a certain unique $\omega\in\RRR$.
Again in the first case $j^1\hdif(\orig)=j^1\AFlow_{\omega}(\orig)$.
\end{corollary}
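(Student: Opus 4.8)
The plan is to read off the possible Jacobi matrices $H$ from two facts already in hand: Theorem~\ref{th:AH_HA}, which applies because $\orig$ is the only singular point of the \TC\ field $\AFld$ (so $\singA$ is nowhere dense) and $\hdif(\orb)=\orb$ in particular preserves the foliation by orbits, and Lemma~\ref{lm:eigen_val_H}, which says that both eigenvalues of $H$ have modulus $1$. By Lemma~\ref{lm:1jet_TCVF}, since $A=\nabla\AFld\neq0$ the matrix $A$ is, up to a linear change of coordinates, one of the two listed forms, so I would split into the two cases of the corollary, each reducing to a short computation in $\RRR^2$.

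\textbf{Case (1), $A=\left(\begin{smallmatrix}0&b\\-b&0\end{smallmatrix}\right)$ (rank $2$).} Here Theorem~\ref{th:AH_HA} supplies $\tau\in\RRR$ with $AH=\tau HA$. Writing $H=\left(\begin{smallmatrix}p&q\\r&s\end{smallmatrix}\right)$, this relation is equivalent to the four scalar equations $r=-\tau q$, $s=\tau p$, $p=\tau s$, $q=-\tau r$; combining them gives $(\tau^2-1)p=(\tau^2-1)q=0$, and since $H$ is invertible this rules out $\tau^2\neq1$, so $\tau=\pm1$. For $\tau=1$ one gets $H=\left(\begin{smallmatrix}p&q\\-q&p\end{smallmatrix}\right)$, whose eigenvalues have modulus $\sqrt{p^2+q^2}$, hence $p^2+q^2=1$; since $b\neq0$ we may set $p=\cos b\omega$, $q=\sin b\omega$, which is the first matrix of~\eqref{equ:H_A_nondeg}. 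For $\tau=-1$ one gets $H=\left(\begin{smallmatrix}p&q\\q&-p\end{smallmatrix}\right)$, with eigenvalues $\pm\sqrt{p^2+q^2}$, so again $p^2+q^2=1$, giving the second matrix. Finally, the time-$\omega$ map of $\AFld$ has linear part $e^{\omega A}=\left(\begin{smallmatrix}\cos b\omega&\sin b\omega\\-\sin b\omega&\cos b\omega\end{smallmatrix}\right)$, which is the first matrix, so there $j^1\hdif(\orig)=j^1\AFlow_\omega(\orig)$.

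\textbf{Case (2), $A=\left(\begin{smallmatrix}0&b\\0&0\end{smallmatrix}\right)$ (rank $1$, spectrum $\{0\}$).} Put $B=HAH^{-1}$, the linear part of $\hdif_{*}\AFld$. As in the proof of Theorem~\ref{th:AH_HA}, $\hdif(\orb)=\orb$ makes $\AFld$ and $\hdif_{*}\AFld$ collinear at every nonsingular point, hence $A(v)$ and $B(v)$ are collinear for all $v$; moreover $B$ is conjugate to $A$, hence a rank-one nilpotent. Since $\ker A=\im A=\langle e_1\rangle$ and $A(v)\in\langle e_1\rangle$ for every $v$, collinearity forces $\im B\subseteq\langle e_1\rangle$, and being a rank-one nilpotent this gives $B=\left(\begin{smallmatrix}0&c\\0&0\end{smallmatrix}\right)$ with $c\neq0$. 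Reading off $BH=HA$ entrywise then yields that the lower-left entry of $H$ vanishes (together with $cs=bp$), so $H=\left(\begin{smallmatrix}p&q\\0&s\end{smallmatrix}\right)$ is upper triangular with eigenvalues $p,s$; Lemma~\ref{lm:eigen_val_H} forces $p,s\in\{1,-1\}$ while $q$ is arbitrary, and writing $q=b\omega$ with the unique such $\omega$ (as $b\neq0$) the four sign choices for $(p,s)$ are exactly the four matrices of~\eqref{equ:H_A_semideg}. Since $A^2=0$, the time-$\omega$ map has linear part $e^{\omega A}=I+\omega A=\left(\begin{smallmatrix}1&b\omega\\0&1\end{smallmatrix}\right)$, the first of these, so $j^1\hdif(\orig)=j^1\AFlow_\omega(\orig)$ in that case.

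The only real obstacle will be the degenerate case (2): there Theorem~\ref{th:AH_HA} outputs a commutation relation with the auxiliary matrix $HG_2$ rather than with $H$ itself, so one must either specialise the formula for $G_2$ to $n=2$ or, as above, go back to the pointwise collinearity of $A$ and $B=HAH^{-1}$ to pin down $B$ and then solve $BH=HA$ by hand. Everything else is routine $2\times2$ linear algebra. (The statement claims only that $H$ lies in the listed set; which of these matrices actually occur as Jacobi matrices of genuine orbit-preserving diffeomorphisms is not needed here, though the ``rotation'' entries are realised by the maps $\AFlow_\omega$ themselves.)
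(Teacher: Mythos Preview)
Your proposal is correct and, for case~(1), essentially identical to the paper's argument: both invoke Theorem~\ref{th:AH_HA} to obtain $AH=\tau HA$, reduce to $\tau=\pm1$ (you via $(\tau^2-1)p=(\tau^2-1)q=0$, the paper via $\det A=\tau^2\det A$), and then apply Lemma~\ref{lm:eigen_val_H} to normalise $p^2+q^2=1$.

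For case~(2) you take a genuinely different and somewhat more direct route. The paper applies the rank-one clause~(A3) of Proposition~\ref{pr:collin-conjug-lin-maps} through Theorem~\ref{th:AH_HA}: it produces the auxiliary matrix $G_2$ (which in dimension~$2$ is diagonal), observes that $K=HG_2$ commutes with $A$, reads off that $K=\left(\begin{smallmatrix}\alpha&\beta\\0&\alpha\end{smallmatrix}\right)$, and then recovers $H=KG_2^{-1}$ before invoking Lemma~\ref{lm:eigen_val_H}. You instead go back to the underlying collinearity of $A$ and $B=HAH^{-1}$ to pin down $B=\left(\begin{smallmatrix}0&c\\0&0\end{smallmatrix}\right)$ directly, and then solve $BH=HA$ by hand to get $r=0$. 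This bypasses the $G_2$ machinery entirely, which is a net simplification in the $2\times2$ setting; the price is that your sentence ``collinearity forces $\im B\subseteq\langle e_1\rangle$'' deserves one extra line (e.g.\ apply collinearity to $e_2$ and to $e_1+e_2$, both outside $\ker A$, to conclude $B(e_1)=B(e_1+e_2)-B(e_2)\in\langle e_1\rangle$). The paper's approach has the advantage of fitting into the general $n$-dimensional framework of Proposition~\ref{pr:collin-conjug-lin-maps}, while yours is more self-contained for the case at hand.
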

\begin{proof}
(1) Suppose 
$A=\left(\begin{smallmatrix}
0 & b \\-b & 0
\end{smallmatrix}\right)$ 
for some $b\not=0$.
Then the Jacobi matrix of $\AFlow_t$ at $\orig$ is given by
$$
J(\AFlow_t,\orig) =
\left(\begin{smallmatrix}
\cos bt  & \sin bt \\ -\sin bt & \cos bt 
\end{smallmatrix}\right).
$$

Let $H = J(\hdif,\orig) = 
\left(\begin{smallmatrix}
\alpha & \beta \\ \gamma & \delta
\end{smallmatrix}\right).$
By Theorem~\ref{th:AH_HA} $A=\tau HAH^{-1}$ for some $\tau\not=0$.
Since $A$ is non-degenerate, we obtain 
$\det(A) = \det (\tau HAH^{-1}) = \tau^2 \det(A)$, whence 
$\tau=\pm1$.

A direct calculation shows that $\gamma=\tau\beta$ and $\delta=-\tau\alpha$, so
$$
H=\left(\begin{smallmatrix}
\alpha & \beta \\ -\tau\beta & -\tau\alpha
\end{smallmatrix}\right) = 
(\alpha^2+\beta^2)
\left(\begin{smallmatrix}
\cos b\omega & \sin b\omega \\ -\tau \sin b\omega & -\tau\cos b\omega
\end{smallmatrix}\right)
$$
for some $\omega\in\RRR$.
We claim that $\alpha^2+\beta^2=1$.
This will imply that $H$ coincides with one of the matrices~\eqref{equ:H_A_nondeg}.

Indeed, it is easy to verify that the eigen values of $H$ are given by 
$$
\mu_{1,2} = 
\begin{cases}
 (\alpha^2+\beta^2)e^{\pm iq}, & \tau=1 \\
 \pm\sqrt{\alpha^2+\beta^2}, & \tau=-1.
\end{cases}
$$
By Lemma~\ref{lm:eigen_val_H} $|\mu_1|=|\mu_2|=1$, whence $\alpha^2+\beta^2=1$.

(2) Suppose 
$A=\left(\begin{smallmatrix}
0 & b \\ 0 & 0 
\end{smallmatrix}\right)$,
for some $b\not=0$.
Then the Jacobi matrix of $\AFlow_t$ at $\orig$ is given by
$$
J(\AFlow_t,\orig) =
\left(\begin{smallmatrix}
1 &  bt \\ 0 & 1 
\end{smallmatrix}\right).
$$
On the other hand, by (A3) of Proposition~\ref{pr:collin-conjug-lin-maps} $A$ commutes with matrix $K=HG$, where 
$$
G = \left(\begin{smallmatrix}
1 &  0 \\ 0 & 1/q 
\end{smallmatrix}\right)
$$
is given by~\eqref{equ:A_zero_spectrum_G2} for some $q\not=0$.

Write 
$K=\left(\begin{smallmatrix}
\alpha & \beta \\ \gamma & \delta
\end{smallmatrix}\right)$.
Then the identity $AK=KA$ easily implies that $\delta=\alpha$ and $\gamma=0$, so
$K=\left(\begin{smallmatrix}
\alpha & \beta \\ 0 & \alpha
\end{smallmatrix}\right)$.
Hence 
$$
H = K G^{-1} = 
\left(\begin{smallmatrix}
\alpha & \beta \\ 0 & \alpha
\end{smallmatrix}\right)
\left(\begin{smallmatrix}
1 & 0 \\ 0 & q
\end{smallmatrix}\right) 
\ = \
\left(\begin{smallmatrix}
\alpha & \beta q \\ 0 & \alpha q
\end{smallmatrix}\right)
$$
Evidently, $\alpha$ and $\alpha q$ are eigen values of $H$.
Then by Lemma~\ref{lm:eigen_val_H} $|\alpha|=|q|=1$, whence $H$ coincides with one of the matrices~\eqref{equ:H_A_semideg}, where $\omega=\beta q/b$.
\end{proof}

Let $\AFld$ be a \TC\ vector field on $\disk$.
Denote by $\EAp{1}$ the subset of $\EApl$ consisting of maps $\hdif$ such that $\jo{\hdif}=\id$.

Suppose that $A=\nabla\AFld\not=0$.
Then by Corollary~\ref{cor:j1_of_orb_pres_dif} for each $\hdif\in\EApl$, there exist $\omega\in\RRR$ such that $J(\hdif,\orig)=J(\AFlow_{\omega},\orig)$.
Hence $$\AFlow_{-\omega}\circ\hdif \ \in \ \EAp{1}.$$
\begin{proposition}\label{pr:sect_ShA_j1}
Suppose that $A=\nabla\AFld\not=0$.
Then there exists a neighbourhood $\Nbh$ of the identity map $\id_{\disk}$ in $\EApl$ and a $\Wr{1}$-function $\omega:\Nbh\to\RRR$ such that $\jo{\hdif}=\jo{\AFlow_{\omega(\hdif)}}$ for each $\hdif\in\EApl$ and $\omega(\id_{\disk})=0$.
Hence we have a well-defined map
$$
H:\EApl \supset \Nbh \to \EAp{1},
\qquad
H(\hdif) = \AFlow_{-\omega(\hdif)} \circ \hdif.
$$
This function preserves smoothness.
\end{proposition}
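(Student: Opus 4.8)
The plan is to recover $\omega(\hdif)$ from the Jacobi matrix $J(\hdif,\orig)$ by means of Corollary~\ref{cor:j1_of_orb_pres_dif}. Since $\orig$ is fixed by the flow, the linearization of $\AFlow_\omega$ at $\orig$ is $J(\AFlow_\omega,\orig)=e^{\omega A}$, where $A=\nabla\AFld$. As $A\not=0$, the smooth curve $\gamma\colon\RRR\to\GLR{2}$, $\gamma(\omega)=e^{\omega A}$, satisfies $\gamma(0)=\id$ and $\gamma'(0)=A\not=0$, so its restriction to a small interval $(-\eps,\eps)$ is a smooth embedding; fix a $\Cinf$ map $\phi$ defined on a neighbourhood of $\id$ in $\GLR{2}$ with $\phi\circ\gamma=\mathrm{id}$ on $(-\eps,\eps)$ and $\phi(\id)=0$.

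The first step is to show that for $\hdif$ sufficiently $\Wr{1}$-close to $\id_\disk$ one has $J(\hdif,\orig)=\gamma(\omega)$ for a unique $\omega\in(-\eps,\eps)$. Indeed, by Corollary~\ref{cor:j1_of_orb_pres_dif} (applicable since $A\not=0$) the matrix $J(\hdif,\orig)$ belongs to the finite list~\eqref{equ:H_A_nondeg} or~\eqref{equ:H_A_semideg}, according as $A$ has rank $2$ or $1$; inspecting these lists, every matrix in them except $J(\AFlow_\omega,\orig)=e^{\omega A}$ has trace $0$ (rank $2$ case) or a diagonal entry equal to $-1$ (rank $1$ case), hence stays at distance at least $2$ from $\id$. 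Since $\hdif\mapsto J(\hdif,\orig)$ is $\Wr{1}$-continuous, there is a $\Wr{1}$-open neighbourhood $\Nbh$ of $\id_\disk$ in $\EApl$ on which $J(\hdif,\orig)$ remains in $\gamma((-\eps,\eps))$, which gives the claim. On $\Nbh$ one then sets $\omega(\hdif)=\phi(J(\hdif,\orig))$; this is $\Wr{1}$-continuous, being the composite of a $\Wr{1}$-continuous map with $\phi$, it satisfies $\omega(\id_\disk)=\phi(\id)=0$, and by construction $\jo{\hdif}=J(\hdif,\orig)=\gamma(\omega(\hdif))=\jo{\AFlow_{\omega(\hdif)}}$.

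Next, writing $\omega=\omega(\hdif)$, the map $H(\hdif)=\AFlow_{-\omega}\circ\hdif$ lies in $\EApl$ as a composite of orbit- and orientation-preserving self-maps of $\disk$, it fixes $\orig$, and by the chain rule its Jacobi matrix at $\orig$ equals $J(\AFlow_{-\omega},\orig)\,J(\hdif,\orig)=\gamma(-\omega)\,\gamma(\omega)=\id$; hence $\jo{H(\hdif)}=\id$ and $H(\hdif)\in\EAp{1}$. To see that $H$ preserves smoothness, take a $\Cinf$ map $\Phi\colon\disk\times\RRR^k\to\disk$ with $\Phi_t\in\Nbh$ for all $t$; then $H(\Phi_t)(z)=\AFlow\bigl(\Phi(z,t),\,-\phi(J(\Phi_t,\orig))\bigr)$, which is $\Cinf$ in $(z,t)$ because $\Phi$ is $\Cinf$, the map $t\mapsto J(\Phi_t,\orig)$ is $\Cinf$ (its entries are partial derivatives of the $\Cinf$ map $\Phi$ evaluated at $\orig$), $\phi$ is $\Cinf$ near $\id$, and the flow $\AFlow\colon\disk\times\RRR\to\disk$ is $\Cinf$ since $\AFld$ is a complete $\Cinf$ vector field on the compact disk.

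The one delicate point is the second paragraph: one must use $\Wr{1}$-proximity to $\id_\disk$ to discard every matrix of the lists in Corollary~\ref{cor:j1_of_orb_pres_dif} except $e^{\omega A}$, and in the rank-$2$ case one must pass from the covering $\gamma\colon\RRR\to\mathrm{SO}(2)$ to a local section near $\id$ so that $\omega$ is single-valued and continuous --- the resulting ambiguity modulo $2\pi/b$ is harmless precisely because only a local section is needed. Everything else is a routine composition of smooth maps.
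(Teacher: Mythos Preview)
Your proof is correct and follows essentially the same approach as the paper: use Corollary~\ref{cor:j1_of_orb_pres_dif} to force $J(\hdif,\orig)$ into the one-parameter group $\{e^{\omega A}\}=\{J(\AFlow_\omega,\orig)\}$, then take a local section near $\id$ to read off $\omega$. The only cosmetic difference is that the paper splits into the two cases $\rank A=2$ and $\rank A=1$ and writes down the section explicitly in each, whereas you handle both at once via the exponential and a generic local left inverse $\phi$; the paper also does not bother to explicitly exclude the orientation-reversing matrices in~\eqref{equ:H_A_nondeg} and~\eqref{equ:H_A_semideg} by a distance estimate as you do, since they are already ruled out by $\hdif\in\EApl$.
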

\begin{proof}
(1) Assume that $A=\left(\begin{smallmatrix}
0 & b \\ -b & 0 
\end{smallmatrix}\right)$.
Let $p:\RRR\to SO(2)$ be the covering map given by 
$p(t)=\left(\begin{smallmatrix}
\cos 2\pi t & \sin 2\pi t \\
-\sin 2\pi t & \cos 2\pi t
\end{smallmatrix}\right)$ for $t\in\RRR$.
Then by Corollary~\ref{cor:j1_of_orb_pres_dif} for each $\hdif\in\EApl$ we have two maps
$$
\begin{CD}
\EApl @>{j^1:\hdif\,\mapsto\, J(\hdif,\orig)}>> SO(2) @<{p}<< \RRR
\end{CD}
$$
Since $p$ is a local diffeomorphism, there exists a $\Wr{1}$-neighbourhood $\Nbh$ of the identity map $\Nbh$ in $\EApl$ on which the composition $$\omega=p^{-1}\circ j^1:\Nbh\to\RRR$$ is well-defined and satisfies $\omega(\id_{\disk})=0$.

If $\hdif_{\tau}$ is a family of maps in $\EApl$ smoothly depending on $\tau\in\RRR^n$, then evidently so does $\omega(\hdif_{\tau})$.
This implies that $\omega$ preserves smoothness.

(2) Suppose $|A|=0$ but $A\not=0$.
Then by Lemma~\ref{lm:1jet_TCVF} we can assume that 
$A=\left(\begin{smallmatrix}
0 & b \\ 0 & 0 
\end{smallmatrix}\right)$,
whence by Corollary~\ref{cor:j1_of_orb_pres_dif} for each $\hdif\in\EApl$ its Jacobi matrix at $\orig$ has one of the following forms
\begin{equation}\label{equ:alternative_for_H}
\text{either} \quad  
\left(\begin{smallmatrix}
1 & b\omega \\ 0 & 1 
\end{smallmatrix}\right),
\quad \text{or} \quad
\left(\begin{smallmatrix}
-1 & b\omega \\ 0 & -1 
\end{smallmatrix}\right).
\end{equation}
Let $\Nbh$ be a subset of $\EApl$ consisting of $\hdif$ for which 
$J(\hdif,\orig)=\left(\begin{smallmatrix}
1 & b\omega \\ 0 & 1 
\end{smallmatrix}\right)$.
It follows from~\eqref{equ:alternative_for_H} that $\Nbh$ is $\Wr{1}$ open in $\EApl$.

Define $p:\RRR\to GL(2,\RRR)$ by $p(t) = \left(\begin{smallmatrix}
1 & b t \\ 0 & 1 
\end{smallmatrix}\right)$ and let $G=p(\RRR)$ be the image of $p$.
Then $p:\RRR\to G$ is a diffeomorphism.
Now similarly to the case (1) for each $\hdif\in\Nbh$ we have the following two maps
$$
\begin{CD}
\Nbh @>{j^1:\hdif\,\mapsto\, J(\hdif,\orig)}>> G @<{p}<< \RRR.
\end{CD}
$$
Then the inverse map 
$\omega=p^{-1}\circ j^1:\Nbh\to\RRR$ is well-defined.
Evidently, it preserves smoothness and satisfies $\omega(\id_{\disk})=0$.
\end{proof}

\section{Polar coordinates}\label{sect:polar_coordinates}
Consider the plane $\RRR^2$ with coordinates $(\phi,\rrho)$.
Let $\IHman=\{\rrho>0\}$ be the open upper half-plane, $\Hman=\{\rrho\geq0\}$ be its closure, and $P:\Hman\to\RRR^2$ be the map defined by
$$
P(\phi,\rrho)=(\rrho\cos\phi,\rrho\sin\phi).
$$
Thus $(\phi,\rrho)$ are the polar coordinates in $\RRR^2$.

Identifying $\RRR^2$ with $\CCC$ we can also define $P:\Hman\to\CCC$ by $$P(\phi,\rrho)=\rrho e^{i\phi}.$$

Let $\eta:\Hman\to\Hman$ be given by $\eta(\phi,\rrho)=(\phi+2\pi,\rrho)$.
Then $P=P\circ\eta$ and thus $\eta$ induces a $P$-equivariant $\ZZZ$-action $\Hman$ such that $P$ is a factor map $\Hman\to\Hman/\ZZZ \equiv \RRR^2$.

In this section we recall how to lift certain objects like functions, self-maps, and vector fields defined on $\RRR^2$ to the corresponding objects on $\Hman$ via $P$.

\subsection{Correspondence between flat functions}
Let $\CiZHR$ be the subset of $\Ci{\Hman}{\RRR}$ consisting of all $\ZZZ$-invariant functions $\tafunc:\Hman\to\RRR$, i.e. $$\tafunc(\phi+2\pi,\rrho)=\tafunc(\phi,\rrho).$$

Let also $\FlatZHR \subset \CiZHR$ be the subset consisting of all function flat on $\dHman$ and 
$\FlatOR\subset\Ci{\RRR^2}{\RRR}$ be the subset consisting of all function flat at the origin $\orig$.

Notice that for every $\tafunc\in\CiZHR$ there exists a unique $\Cinf$ function $\afunc:\RRR^2\setminus\{0\}\to\RRR$ such that $\tafunc=\afunc\circ P$.
But in general, $\afunc$ can not be even continuously extended to all of $\RRR^2$.
For instance, $\afunc$ is continuous if and only if $\tafunc$ is constant on the $\phi$-axis $\dHman=\{\rrho=0\}$.

The following statement shows that $\afunc$ is actually $\Cinf$ and flat at $\orig$ whenever $\tafunc$ is flat on $\dHman$.
Smoothness and flatness of $\afunc$ was shown in~\cite[Lm.\;5.3]{Takens:AIF:1973}, and in~\cite[Th.\;5.1]{Maks:Hamvf:2006} continuity of the correspondence $\tafunc\mapsto\afunc$ was treated.
\begin{lemma}\label{lm:flat_Zinv_func}{\rm\cite[Lm.\;5.3]{Takens:AIF:1973}, \cite[Th.\;5.1]{Maks:Hamvf:2006}.}
There exists a $\contW{\infty}{\infty}$-continuous and preserving smoothness map 
$$
\ff:\FlatZHR\to\FlatOR
$$
such that $\tafunc = \ff(\tafunc)\circ P$ for all $\tafunc\in\FlatZHR$.
Thus every smooth, $\ZZZ$-invariant, flat on $\dHman$ function on $\Hman$ induces a $\Cinf$ function on $\RRR^2$ flat at $\orig$.
\end{lemma}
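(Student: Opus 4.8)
The plan is to reduce the statement to the classical fact about Taylor expansions in polar coordinates and then verify the continuity/smoothness-preservation statement by exhibiting an explicit integral-type formula for $\ff$. First I would recall the key elementary observation: if $\tafunc\in\FlatZHR$, then $\tafunc$ is $\ZZZ$-invariant in $\phi$, flat along $\rrho=0$, and $P(\phi,\rrho)=\rrho e^{i\phi}$, so the question is whether $\afunc(x,y):=\tafunc(P^{-1}(x,y))$ (well-defined and $\Cinf$ on $\RRR^2\setminus\{0\}$) extends smoothly and flatly to the origin. The hard point is purely at $\orig$: away from it everything is automatic by the local diffeomorphism property of $P$.

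For the extension and flatness at $\orig$ I would invoke Takens~\cite[Lm.~5.3]{Takens:AIF:1973} directly, as the statement permits; but to control continuity of the correspondence $\tafunc\mapsto\afunc$ I would make the construction more explicit. The standard device is to write $\afunc$ by integrating the ``flat remainder'' of $\tafunc$ along rays: since $\tafunc$ is flat on $\dHman$, for every $N$ one has $|\partial^{\beta}\tafunc(\phi,\rrho)|\le C_{N,\beta}\,\rrho^{N}$ uniformly in $\phi$ on compact $\rrho$-intervals, by Taylor's formula with integral remainder applied in the $\rrho$ variable and $\ZZZ$-invariance (hence compactness in $\phi$). These bounds transfer to bounds $|\partial^{\gamma}\afunc(x,y)|\le C'_{N,\gamma}\,(x^2+y^2)^{(N-|\gamma|)/2}$ on $\RRR^2\setminus\{0\}$ by the chain rule, because each $\rrho$-derivative costing a factor $\rrho^{-1}$ is compensated by the extra power of $\rrho$. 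Letting $N\to\infty$ shows every partial derivative of $\afunc$ extends continuously by $0$ across $\orig$, so $\afunc\in\FlatOR$; this gives a well-defined map $\ff:\FlatZHR\to\FlatOR$ with $\tafunc=\ff(\tafunc)\circ P$, and moreover the constants $C'_{N,\gamma}$ depend only on finitely many $\Wr{k}$-seminorms of $\tafunc$ on a fixed compact set, which yields $\contW{\infty}{\infty}$-continuity.

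Finally, for preservation of smoothness: given a $\Cinf$ family $H:\Hman\times\RRR^{n}\to\RRR$ with $H_{t}\in\FlatZHR$ for all $t$, I would apply the same ray-integral bounds with parameters, noting that $\partial_{t}$-derivatives commute with the construction and that the flatness estimates are locally uniform in $t$ (since $H$ is jointly $\Cinf$ and $\ZZZ$-invariant in $\phi$, hence its $\rrho$-Taylor remainders are jointly smooth and flat on $\{\rrho=0\}\times\RRR^{n}$). Thus $\ff(H)(x,y,t)=H(P^{-1}(x,y),t)$ on $(\RRR^2\setminus\{0\})\times\RRR^{n}$ extends to a $\Cinf$ map on $\RRR^2\times\RRR^{n}$, all of whose derivatives vanish on $\{0\}\times\RRR^{n}$. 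The main obstacle is the bookkeeping in the chain rule that shows the $j$-fold $\rrho$-derivative of $\afunc\circ P$ involves negative powers $\rrho^{-j}$ with bounded coefficients, so that flatness of $\tafunc$ (decay faster than every power of $\rrho$) survives the transfer to $\afunc$; this is exactly the content of Takens' lemma, and I would either cite it or reproduce the short induction.
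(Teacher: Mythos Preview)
The paper does not actually supply its own proof of this lemma: it is stated with citations to \cite[Lm.~5.3]{Takens:AIF:1973} and \cite[Th.~5.1]{Maks:Hamvf:2006} and then used as a black box. Your sketch is correct and is essentially the standard argument one finds in those references: use $\ZZZ$-invariance to reduce to a compact $\phi$-interval, use flatness on $\dHman$ and Taylor's formula in $\rrho$ to get $|\partial^{\beta}\tafunc|\le C_{N,\beta}\rrho^{N}$, transfer these bounds through the chain rule for $P^{-1}$ to control $\partial^{\gamma}\afunc$ on $\RRR^2\setminus\{\orig\}$, and conclude smooth flat extension; the explicit dependence of the constants on finitely many seminorms of $\tafunc$ then gives $\contW{\infty}{\infty}$-continuity, and running the same estimates with parameters gives preservation of smoothness.

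One small slip in your write-up: you say ``each $\rrho$-derivative costing a factor $\rrho^{-1}$'', but in fact it is the $\phi$-derivatives that introduce the negative powers, via
\[
\partial_x=\cos\phi\,\partial_{\rrho}-\tfrac{\sin\phi}{\rrho}\,\partial_{\phi},\qquad
\partial_y=\sin\phi\,\partial_{\rrho}+\tfrac{\cos\phi}{\rrho}\,\partial_{\phi},
\]
together with differentiation of the accumulated $\rrho^{-j}$ coefficients. The conclusion---that a $|\gamma|$-th order $(x,y)$-derivative of $\afunc$ is bounded by $\rrho^{-|\gamma|}$ times derivatives of $\tafunc$ of order $\le|\gamma|$ with bounded trigonometric coefficients---is exactly as you state, so this does not affect the argument.
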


\subsection{Correspondence between smooth maps}
Let $\hdif:\RRR^2\to\RRR^2$ be a continuous map such that $\hdif^{-1}(0)=0$.
Then $\hdif$ lifts to a certain map $\thdif:\IHman\to\IHman$ which commutes with $\eta$ and satisfies 
$\hdif\circ P = P \circ\thdif$.
Hence for each $n\in\ZZZ$ the map $\thdif\circ\eta$ is also a lifting of $\hdif$.

It is well-known that if $\hdif$ is at least of class $C^{1}$, then $\thdif$ extends to a continuous map $\thdif:\Hman\to\Hman$.
Moreover, if $\hdif$ is $\Cinf$, then so is $\thdif$.

Our aim is to estimate continuity of the correspondence $\hdif\mapsto\thdif$.

Let $\MapR$ be the subset of $\Ci{\RRR^2}{\RRR^2}$ consisting of maps $\hdif:\RRR^2\to\RRR^2$ such that 
\begin{itemize}
 \item $\hdif(\orig)=\orig$;
 \item $\hdif(\RRR^2\setminus\orig)\subset \RRR^2\setminus\orig$;
 \item $\jo{\hdif}=\tau \cdot \id$ \ for some $\tau >0$.
\end{itemize}

Let also $\MapH$ be the subset of $\Ci{\Hman}{\Hman}$ consisting of maps $\thdif:\Hman\to\Hman$ such that 
\begin{itemize}
 \item $\thdif$ is fixed on $\dHman$;
 \item $\thdif(\IHman)\subset\IHman$.
\end{itemize}

Finally let $\MapZH$ be the subset of $\MapH$ consisting of $\ZZZ$-equivariant maps, i.e. maps commuting with $\ZZZ$-action on $\Hman$. 
In other words, $\thdif=(\thp,\thr)\in\MapH$ belongs to $\MapZH$ if and only if
$$
\thp(\phi+2\pi,\rrho)=\thp(\phi,\rrho)+2\pi,
\qquad
\thr(\phi+2\pi,\rrho)=\thr(\phi,\rrho).
$$

The following statement claims that every $\hdif\in\MapR$ lifts to a unique $\thdif\in\MapZH$ such that $P\circ\thdif=\hdif\circ P$ and the correspondence $\hdif\mapsto\thdif$ is $\contS{\infty}{\infty}$-continuous.
\begin{proposition}\label{pr:lift_Map_id}
There exists a unique mapping $$\ml:\MapR\to\MapZH$$ such that 
$P\circ\ml(\hdif)=\hdif\circ P$ for $\hdif\in\MapR$.
This map preserves smoothness and is $\contS{k+1}{k}$-continuous for each $k\geq0$.
\end{proposition}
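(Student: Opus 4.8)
The statement to prove is Proposition~\ref{pr:lift_Map_id}: every $\hdif\in\MapR$ lifts uniquely to $\thdif\in\MapZH$ with $P\circ\thdif=\hdif\circ P$, this correspondence preserves smoothness, and is $\contS{k+1}{k}$-continuous for each $k\geq0$. I would proceed in four stages: (1) construct the lift on $\IHman$ by covering-space theory; (2) extend it $\Cinf$-smoothly across $\dHman$, identifying its $1$-jet there; (3) prove the regularity (smoothness-preservation and $\contS{k+1}{k}$-continuity), which is the heart of the matter; and (4) record uniqueness.

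\smallskip
\emph{Construction of the lift.} Fix $\hdif=(\hdif_1,\hdif_2)\in\MapR$. Since $\hdif(\RRR^2\setminus\orig)\subset\RRR^2\setminus\orig$ and $P\colon\IHman\to\RRR^2\setminus\orig$ is the universal covering (with deck group generated by $\eta$), the map $\hdif\circ P\colon\IHman\to\RRR^2\setminus\orig$ lifts through $P$ to a $\Cinf$ map $\thdif\colon\IHman\to\IHman$; fixing the lift on one point pins it down uniquely, and $\ZZZ$-equivariance $\thdif\circ\eta=\eta\circ\thdif$ follows because both sides are lifts of $\hdif\circ P\circ\eta=\hdif\circ P$ agreeing somewhere, hence everywhere. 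Concretely, writing $z=\rrho e^{i\phi}$ and $\hdif$ in complex notation, the radial component is $\thr(\phi,\rrho)=|\hdif(\rrho e^{i\phi})|$ and the angular component $\thp(\phi,\rrho)$ is a continuous choice of $\arg\hdif(\rrho e^{i\phi})$; these formulas make the $\ZZZ$-equivariance transparent.

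\smallskip
\emph{Smooth extension across $\dHman$.} Here the hypothesis $\jo{\hdif}=\tau\cdot\id$ with $\tau>0$ is used. Write $\hdif(z)=\tau z + r(z)$ where $r$ is $\Cinf$ with $\jo{r}(\orig)=0$. For the radial part, $\thr(\phi,\rrho)^2=|\hdif(\rrho e^{i\phi})|^2=\tau^2\rrho^2+\langle\,\cdot\,\rangle$, and one checks $\thr(\phi,\rrho)/\rrho\to\tau$ as $\rrho\to0$ uniformly in $\phi$; dividing the squared identity by $\rrho^2$ and applying Hadamard's lemma to the remainder gives a $\Cinf$ extension of $\thr$ to $\Hman$ with $\thr|_{\dHman}=0$ wait --- more care is needed since $\thr$ is not fixed on $\dHman$ unless $\tau=1$; but $\MapZH$ only requires $\thdif$ fixed on $\dHman$, so one first normalizes: the genuinely relevant point is that after writing $\hdif(z)=z\cdot(\tau+\varepsilon(z))$ with $\varepsilon$ smooth (again Hadamard, since $\jo r(\orig)=0$ forces $r(z)=z\cdot O(|z|)$ componentwise --- here one uses that $\hdif$ never vanishes off $\orig$ to take the quotient), the angular component satisfies $e^{i\thp}= (\tau+\varepsilon(\rrho e^{i\phi}))/|\tau+\varepsilon(\rrho e^{i\phi})|\cdot e^{i\phi}$, and since the factor tends to $1$ near $\dHman$, $\thp(\phi,\rrho)-\phi$ extends smoothly across $\rrho=0$ with value $0$; similarly $\thr$ extends smoothly with $\thr|_{\dHman}=\rrho\cdot\tau|_{\rrho=0}=0$, hence $\thdif$ is fixed on $\dHman$ and $\thdif(\IHman)\subset\IHman$, so $\thdif\in\MapZH$. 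The key analytic input is Hadamard's lemma combined with the nonvanishing of $\hdif$ off the origin, which lets us pass from $\hdif(z)=\tau z+r(z)$ to a \emph{smooth nonvanishing} factorization $\hdif(z)=z\cdot u(z)$ on a neighbourhood of $\orig$.

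\smallskip
\emph{Regularity.} For smoothness-preservation and $\contS{k+1}{k}$-continuity, I would run the same construction with parameters: given $\Cinf$ $H\colon\RRR^2\times\RRR^n\to\RRR^2$ with each $H_t\in\MapR$, the factorization $H_t(z)=z\cdot u(z,t)$ has $u$ jointly $\Cinf$ (Hadamard's lemma is smooth in parameters), hence $\ml(H_t)(\phi,\rrho)$ is jointly $\Cinf$, giving smoothness-preservation. For $\contS{k+1}{k}$-continuity, the loss of one derivative comes precisely from the substitution $z=\rrho e^{i\phi}$ near $\rrho=0$: a $\Cr{k+1}$ estimate on $\hdif$ translates, through the Hadamard division by $\rrho$ (equivalently by $z$), into a $\Cr{k}$ estimate on the extended $\thdif$ --- one extra derivative is consumed absorbing the singular change of coordinates, exactly as in Lemma~\ref{lm:flat_Zinv_func} and the cited~\cite[Th.\;5.1]{Maks:Hamvf:2006}. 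I expect \textbf{this last regularity estimate} --- making the ``$\Cr{k+1}\Rightarrow\Cr{k}$'' bookkeeping precise and uniform on compacta, including up to $\dHman$ --- to be the main obstacle; everything else is soft. Uniqueness of $\ml$ is immediate: two lifts in $\MapZH$ agree on $\IHman$ by the lifting lemma (they differ by a deck transformation $\eta^m$, but both are $\ZZZ$-equivariant and both restrict to... in fact both must restrict to the identity on $\dHman$, forcing $m=0$), hence agree on $\Hman$ by continuity.
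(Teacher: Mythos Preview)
Your overall plan matches the paper's: lift on $\IHman$ by covering-space theory, extend smoothly across $\dHman$ using Hadamard's lemma, and track the one-derivative loss to obtain $\contS{k+1}{k}$-continuity. Uniqueness is handled the same way.

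There is, however, a genuine gap in your smooth-extension step. You write $\hdif(z)=z\cdot(\tau+\varepsilon(z))$ with $\varepsilon$ smooth on $\RRR^2$, justifying this by Hadamard's lemma and the fact that $\jo{r}=0$. This is false: Hadamard in the real variables $(x,y)$ does not give you a complex factor $z$. Concretely, take (locally near $\orig$) $\hdif(x,y)=(x+x^2,y)$, which has $\jo{\hdif}=\id$; then
\[
\frac{\hdif(z)}{z}-1=\frac{x^2}{x+iy}=\frac{x^3-ix^2y}{x^2+y^2},
\]
whose real part $x^3/(x^2+y^2)$ is continuous but not $C^1$ at $\orig$ (its $\partial_x$-derivative equals $1$ along $y=0$ and $0$ along $x=0$). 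So $\varepsilon=\hdif/z-\tau$ is not smooth on $\RRR^2$, and your subsequent argument that $\thp-\phi$ and $\thr$ extend smoothly to $\dHman$ via $\varepsilon\circ P$ does not go through as stated. (The paper's Lemma~\ref{lm:h_zg} does prove a factorisation $\hdif(z)=z\gamma(z)$ with $\gamma$ smooth, but only for $\hdif\in\EApl$, using the orbit structure of a \TC\ vector field---extra input you do not have here.)

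The fix, which is exactly what the paper does, is to apply Hadamard componentwise in the real variables: write $\hx=\tau x+x\alpha_1+y\alpha_2$, $\hy=\tau y+x\beta_1+y\beta_2$ with $\alpha_i,\beta_i\in\Cinf(\RRR^2)$ vanishing at $\orig$. After composing with $P$ the factor $\rrho$ (not $z$) emerges, since $x=\rrho\cos\phi$ and $y=\rrho\sin\phi$; thus $\hdif\circ P(\phi,\rrho)=\rrho\cdot W(\phi,\rrho)$ with $W$ smooth on $\Hman$ and $W(\phi,0)=\tau e^{i\phi}$. From this the paper writes down explicit $\arctg$-formulas (its equation~\eqref{equ:thdif_phi0}) for the local lift near each $(\phi_0,0)$, which are manifestly $\Cinf$ and fixed on $\dHman$; patching gives $\thdif\in\MapZH$. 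These explicit formulas also make the $\contS{k+1}{k}$-estimate transparent: the single lost derivative occurs exactly at the Hadamard step~\eqref{equ:afunc_i__bfunc_i}, which expresses $\alpha_i,\beta_i$ through first derivatives of $\hdif$, while the remaining steps are compositions with fixed smooth maps. Your heuristic ``division by $\rrho$ costs a derivative'' is the right intuition, but it must be run with the real Hadamard decomposition, not the complex one.
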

\begin{proof}
Let $\hdif=(\hx,\hy) \in\MapR$.
We have to construct a $\Cinf$ map $\thdif:\Hman\to\Hman$ fixed on $\dHman$ and such that $P\circ\thdif=\hdif\circ P$.

By assumption $\jo{\hdif}=\tau\,\id$ for some $\tau>0$, i.e.

\begin{equation}\label{equ:2-jet_of_hdif}
\begin{array}{rcl}
\hx(x,y) &=& \tau x + x\alpha_{1} + y\alpha_{2}, \\ [2mm]
\hy(x,y) &=& \tau y + x\beta_{1} + y\beta_{2}
\end{array}
\end{equation}
for some $\Cinf$ functions $\alpha_{i},\beta_{i}:\RRR^2\to\RRR$ such that $\alpha_{i}(\orig)=\beta_{i}(\orig)=0$.
These functions are not unique and, for instance, they can be given by the following formulas, e.g.~\cite{Hirsch:DiffTop}:
\begin{equation}\label{equ:afunc_i__bfunc_i}
\begin{array}{lcl}
\afunc_1(x,y) = \int\limits_{0}^{1}\ddd{\hx}{x}(tx,y)dt-\tau, 
&\ & 
\afunc_2(x,y) = \int\limits_{0}^{1}\ddd{\hx}{x}(0,ty)dt.\\ [5mm]
\bfunc_1(x,y) = \int\limits_{0}^{1}\ddd{\hy}{x}(tx,y)dt,
&\ &
\bfunc_2(x,y) = \int\limits_{0}^{1}\ddd{\hy}{x}(0,ty)dt - \tau.
\end{array}
\end{equation}

Hence 
$$ 
\begin{array}{rcl}
\hx\circ P(\phi,\rrho) &=& \rrho\cos\phi + \rrho A(\phi,\rrho),\\ [2mm]
\hy\circ P(\phi,\rrho) &=& \rrho\sin\phi + \rrho B(\phi,\rrho),
\end{array}
$$ 
where 
\begin{equation}\label{equ:hdif_P}
\begin{array}{rcl}
A(\phi,\rrho)&=&\cos\phi \cdot \alpha_{1}\circ P(\phi,\rrho)+\sin\phi \cdot \alpha_{2}\circ P(\phi,\rrho),\\[2mm]
B(\phi,\rrho)&=&\cos\phi \cdot \beta_{1}\circ P(\phi,\rrho)+\sin\phi \cdot \beta_{2}\circ P(\phi,\rrho).
\end{array}
\end{equation}
It follows that $A$ and $B$ are $\Cinf$, $\ZZZ$-invariant, and vanish on $\dHman$, i.e. $A(\phi,0)=B(\phi,0)=0$ for all $\phi\in\RRR$.

Since $P:\IHman\to\RRR^2\setminus\{\orig\}$ is a covering map, there exists a $\Cinf$ map
$$
\tgdif=(\thp,\thr):\IHman\to\IHman
$$
such that $P\circ \tgdif = \hdif\circ P$.
In other words,
$$
\begin{array}{rcl}
\thr(\phi,\rrho)\,\cos\thp(\phi,\rrho) &=& \hx\circ P(\phi,\rrho), \\ [2mm]
\thr(\phi,\rrho)\,\sin\thp(\phi,\rrho) &=& \hy\circ P(\phi,\rrho)
\end{array}
$$
for $\rrho>0$.
This map is not unique, commutes with $\eta$, and can be replaced by $\tgdif\circ\eta^n$ for any $n\in\ZZZ$.

Then for each $\phi_0\in\RRR$ we get that 
$$ 
\begin{array}{rcl}
\thr\,\cos(\thp-\phi_0) &=& \hx\circ P \cdot \cos\phi_0 + \hy\circ P \cdot \sin\phi_0 = \\ [1mm]
&=& \rrho\cos(\phi-\phi_0)+ \rrho A_1(\phi,\rrho), \\ [3mm]
\thr\,\sin(\thp-\phi_0) &=& \hy\circ P \cdot \cos\phi_0 - \hx\circ P \cdot \sin\phi_0 = \\ [1mm]
&=& \rrho\sin(\phi-\phi_0)+ \rrho B_1(\phi,\rrho),
\end{array}
$$ 
where 
\begin{equation}\label{equ:RPhi_phi0}
A_1 = A \cos\phi_0 + B\sin\phi_0,
\qquad
B_1 = B \cos\phi_0 - A\sin\phi_0.
\end{equation}
Again $A_1,B_1:\Hman\to\RRR$ are $\Cinf$, $\ZZZ$-invariant, and vanish for $\rrho=0$.

Consider the following two functions:
\begin{equation}\label{equ:thdif_phi0}
\begin{array}{rcl}
\thp_{\phi_0}(\phi,\rrho) & = & 
\phi_0 + \arctg\, \dfrac{\sin(\phi-\phi_0)+ \rrho B_1(\phi,\rrho)}{\cos(\phi-\phi_0)+ \rrho A_1(\phi,\rrho)}, \\ [6mm]
\thr_{\phi_0}(\phi,\rrho) & = &
\dfrac{\rrho\cos(\phi-\phi_0) + \rrho A_1(\phi,\rrho)}{\cos(\thp_{\phi_0}(\phi,\rrho)-\phi_0)}.
\end{array}
\end{equation}
Evidently, there exist $a,b>0$ such that $\thp_{\phi_0}$ and $\thr_{\phi_0}$ are well-defined and $\Cinf$ on the following neighbourhood
$$\Vman_{\phi_0}=[\phi_0-a,\phi_0+a]\times[0,b]$$
of $(\phi_0,0)$ in $\Hman$.

Moreover, the map $\thdif_{\phi_0}=(\thp_{\phi_0},\thr_{\phi_0}):\Vman_{\phi_0}\to\Hman$ is a lifting of $\hdif$ in the sense that $P\circ\thdif_{\phi_0}=\hdif\circ P: P^{-1}(\Vman_{\phi_0}) \to \RRR^2$.

Also notice that $\thdif_{\phi_0}(\phi,0)=(\phi,0)$, i.e. $\thdif_{\phi_0}$ is fixed on $\dHman\cap\Vman_{\phi_0}$.
Therefore for distinct $\phi_0,\phi_1\in\RRR$ the lifting $\thdif_{\phi_0}$ and $\thdif_{\phi_1}$ coincide on $\Vman_{\phi_0}\cap\Vman_{\phi_1}\cap\dHman$ whenever this intersection is non-empty.
Hence $\thdif_{\phi_0}=\thdif_{\phi_1}$ on all of $\Vman_{\phi_0}\cap\Vman_{\phi_1}$.

This implies that the partial maps $\thdif_{\phi_0}$, $(\phi_0\in\RRR)$, define a unique $\Cinf$ lifting $\thdif$ of $\hdif$ fixed on $\dHman$.

Finally, it follows from~\eqref{equ:thdif_phi0} and $\ZZZ$-invariantness of $A_1$ and $B_1$ that $\thdif$ is $\ZZZ$-equivariant.
Thus $\thdif\in\MapZH$ and we define 
$$
\ml(\hdif) = \thdif.
$$

It remains to verify $\contS{k+1}{k}$-continuity of $\ml$.

Consider the following sequence of correspondences:
$$
\hdif 
\;\stackrel{\eqref{equ:afunc_i__bfunc_i}}{\longmapsto}\;
(\alpha_{ij},\beta_{ij})_{i,j=1,2}
\;\stackrel{\eqref{equ:hdif_P}}{\longmapsto}\;
(A,B)
\;\stackrel{\eqref{equ:RPhi_phi0}}{\longmapsto}\;
(A_1,B_1)
\;\stackrel{\eqref{equ:thdif_phi0}}{\longmapsto}\;
\thdif_{\phi_0}.
$$
The first one expresses $\afunc_{i}$ and $\bfunc_{i}$ via the first partial derivatives of the coordinate functions of $\hdif$.
All others are just compositions of smooth maps.
It follows that for each integer $k\geq0$ there exists a constant $C=C(\phi_0,k)>0$ depending only on $\phi_0$ and $k$ such that 
\begin{equation}\label{equ:lift_cont_estim_phi_0}
\| \thdif'_{\phi_0} - \thdif_{\phi_0} \|^{k}_{\Vman_{\phi_0}} \;\leq\; C \;
\| \hdif' - \hdif \|^{k+1}_{P(\Vman_{\phi_0})},
\end{equation}
for every $\hdif'\in\MapR$ and its lifting $\hdif'_{\phi_0}$.

\begin{claim}\label{clm:estimate_KL}
Let $K\subset\Hman$ be a compact subset and $k\geq0$ and $L=P(K)$.
Then there exists $C>0$ all depending on $K$ and $k$ such that 
$$
\| \thdif' - \thdif \|^{k}_{K} < C \| \hdif' - \hdif \|^{k+1}_{L}
$$
for each $\hdif'\in\EBZpl$.
Hence $\ml$ is $\contW{k+1}{k}$-continuous.
\end{claim}
\begin{proof}
Put \ $\tilde K = [0,2\pi]\cap [0,+\infty)\,\, \bigcap\,\, \mathop\cup\limits_{n=-\infty}^{+\infty} \eta^n(K)$, \  see Figure~\ref{fig:cont_lift}.
Since $\thdif$ is $\ZZZ$-invariant, we have that 
$$\| \thdif' - \thdif \|^{k}_{K} =\| \thdif' - \thdif \|^{k}_{\tilde K}$$
for any $\thdif'\in\MapZH$, where $\|\cdot\|^{k}_{K}$ is the usual $C^{k}$-norm of a map on a compact set $K$, see~\cite{Hirsch:DiffTop}.
\begin{figure}[ht]
\centerline{\includegraphics[height=2cm]{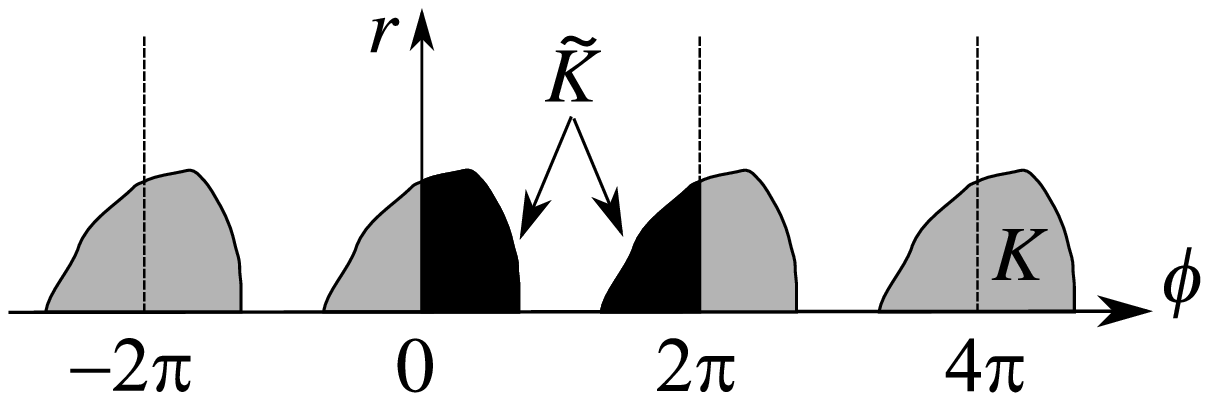}}
\caption{}\label{fig:cont_lift}
\end{figure}

Therefore it suffices to consider the case when $K\subset [0,2\pi]\cap [0,+\infty)$.

Then there exist finitely many values $\phi_{1},\ldots,\phi_{m}\in[0,2\pi]$ such that 
$$
K \cap \dHman \; \subset \; \mathop\cup\limits_{i=1}^{m} \Vman_{\phi_{i}}.
$$
Denote 
$K_0= K \cap \mathop\cup\limits_{i=1}^{m} \Vman_{\phi_{i}}$, and $K_1 = \overline{K \setminus \Vman}$.

Put $L=P(K)$, $L_0=P(K_0)$, $L_1=P(K_1)$.
Then by~\eqref{equ:lift_cont_estim_phi_0} there exists $C_0>0$ such that 
\begin{equation}\label{equ:lift_cont_estim_phi_V}
\| \thdif' - \thdif \|^{k}_{K_0} \;\leq\; C_0 \;
\| \hdif' - \hdif \|^{k+1}_{L_0},
\end{equation}
Moreover, since $P$ homeomorphically maps $K_1$ onto $L_1$, there exists $C_1>0$ such that 
\begin{equation}\label{equ:lift_cont_estim_phi_K_1}
\| \thdif' - \thdif \|^{k}_{K_1} \;\leq\; C_1 \;
\| \hdif' - \hdif \|^{k}_{L_1},
\end{equation}
Put $C=\max\{C_0,C_1\}$. 
Then 
$$
\| \thdif' - \thdif \|^{k}_{K} \;\leq\; C \;\| \hdif' - \hdif \|^{k+1}_{L}.
$$
Claim~\ref{clm:estimate_KL} is proved.
\end{proof}

To prove $\contS{k+1}{k}$-continuity of $\ml$ take any countable locally finite cover $\mathcal{K}=\{K_i\}_{i\in\NNN}$ of the strip $S=[0,2\pi]\times[0,+\infty)$ by compact subsets.
Denote $K^{j}_{i}= \eta^{j}(K_i)$ and $L_i=P(K_i)$.
Then 
$$
\widetilde{\mathcal{K}} = \{K^{j}_{i}\}_{i\in\NNN, j\in\ZZZ}
$$
is a locally finite cover of $\Hman$ and 
$\mathcal{L}=\{L_i\}_{i\in\NNN}$ is a locally finite cover of $\RRR^2$.

For each $i\in\NNN$ take any $\eps_i>0$ and let 
Denote $$\widetilde{\Nbh} = \{\thdif'\in\EBZpl \ : \  \| \thdif' - \thdif \|^{k}_{K^j_i}< \eps_i \}.$$
Then $\widetilde{\Nbh}$ is a base $\Sr{k}$-neighbourhood of $\thdif$ in $\EBZpl$.

It now follows from Claim~\ref{clm:estimate_KL} that for each $i\in\NNN$ there exists $\delta_i>0$ such that 
if $\| \hdif' - \hdif \|^{k+1}_{L_i}<\delta_i$ for $\hdif'\in\EApl$, then $\| \thdif' - \thdif \|^{k}_{K^j_i} <\eps_i$ for any $j\in\ZZZ$.

Notice that $$\Nbh = \{\hdif'\in\EApl \ : \  \| \hdif' - \hdif \|^{k+1}_{L_i}< \delta_i \}$$
is a $\Sr{k+1}$-neighbourhood of $\hdif$ in $\EApl$, and $\ml(\Nbh) \subset\widetilde{\Nbh}$.
This implies  $\contS{k+1}{k}$-continuity of $\ml$.
\end{proof}

\subsection{Lifting of vector fields.}
Let $\AFld=\AFld_x\dd{x}+\AFld_y\dd{y}$ be a vector field on $\RRR^2$.
Since $P$ is a local diffeomorphism of $\Hman$ onto $\RRR^2\setminus\{\orig\}$, $\AFld$ induces a certain vector field 
$\BFld=\BFld_{\phi}\dd{\phi} + \BFld_{\rrho}\dd{\rrho}$ on $\IHman$ such that $TP\circ\BFld=\AFld\circ P$, i.e. the following diagram is commutative:
$$
\begin{CD}
T\IHman @>{TP}>> T(\RRR^2\setminus\orig) \\
@A{\BFld}AA @AA{\AFld}A  \\
\IHman @>{P}>> \RRR^2\setminus\orig
\end{CD}
$$

It easily follows, see e.g.~\cite{Maks:Hamvf:2006}, that coordinate functions on $\AFld$ and $\BFld$ are related by the following identity:
\begin{equation}\label{equ:polar_expression}
\left(\begin{matrix} \BFld_{\phi}  \\ \BFld_{\rrho} \end{matrix} \right) =
\left(\begin{matrix} -\frac{1}{\rrho}\sin\phi & \frac{1}{\rrho}\cos\phi \\ \cos\phi & \sin\phi \end{matrix} \right)
\left(\begin{matrix} \AFld_{x}\circ P  \\ \AFld_{y}\circ P \end{matrix} \right).
\end{equation}
We will say that $\BFld$ is the \myemph{expression of $\AFld$ in polar coordinates} or \myemph{the lifting of $\AFld$}.

\section{Vector fields on $\RRR^2$ with a ``rotation as $1$-jet''}\label{sect:Takens_nf}

\subsection{Normal forms}
We recall here the results of F.\;Takens~\cite{Takens:AIF:1973} about normal forms of singularities of vector fields on $\RRR^2$ with a ``rotation as $1$-jet''.

Let $\AFld=\AFld_1\dd{x}+\AFld_2\dd{y}$ be a vector field on $\disk$ and
\begin{equation}\label{equ:1jet-AFld}
\nabla\AFld = \left(\begin{array}{cc}
\ddd{\AFld_1}{x} &  \ddd{\AFld_1}{y} \\ [2mm]
\ddd{\AFld_2}{x} &  \ddd{\AFld_2}{y}
\end{array} \right).
\end{equation}
Denote by $\lambda_1$ and $\lambda_2$ the eigen values of $\nabla\AFld$ at $\orig$.
\begin{theorem}{\rm\cite{Takens:AIF:1973}.}
Suppose that $\lambda_1,\lambda_2$ are non-zero purely imaginary, i.e. $\lambda_{1,2}=\pm i\nu$ for some $\nu>0$.
Then there exists a diffeomorphism $\gdif:(\disk,\orig)\to(\disk,\orig)$ such that either
\begin{equation}\label{equ:Takens_flat}
\gdif_{*}\BFld(x,y) = \bfunc(x^2+y^2) \left(-(y+\dAy) \dd{x} + (x+\dAx) \dd{y} \right),
\end{equation}
where $\bfunc$ is a $\Cinf$ function, $\bfunc(0,0)\not=0$, $\dAx,\dAy\in\FlatOR$,
or 
\begin{multline}\label{equ:Takens_nonflat}
\gdif_{*}\BFld(x,y) = \bfunc(x,y)\left[ 
-2\pi y \frac{\partial}{\partial x} + 2\pi x \frac{\partial}{\partial y} + \right. \\ \left. +
\left(\delta(x^2+y^2)^k + \alpha(x^2+y^2)^{2k}\right)
\left( x\frac{\partial}{\partial x}+y\frac{\partial}{\partial y} \right)
\right],
\end{multline}
where $\func$ is a $\Cinf$ function, $\bfunc(0,0)=1$, $\delta=\pm1$, $k\in\NNN$, and $\alpha\in\RRR$.

Vector fields and~\eqref{equ:Takens_flat} and~\eqref{equ:Takens_nonflat} are not $\Cinf$ equivalent.
\end{theorem}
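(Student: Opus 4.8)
The plan is to run the classical Poincaré--Dulac reduction adapted to this resonant (non-hyperbolic) situation, together with a first--return--map analysis, following Takens~\cite{Takens:AIF:1973}; I sketch only the architecture. First I would normalise the linear part: since the eigenvalues of $\nabla\AFld$ at $\orig$ are $\pm i\nu$ with $\nu>0$, a real linear change of coordinates sends the $1$-jet of $\AFld$ to $\nu(-y\dd{x}+x\dd{y})$, and as a time rescaling by a positive smooth function is permitted by the statement I take $\nu=2\pi$. Passing to polar coordinates as in \S\ref{sect:polar_coordinates}, the lift of $\AFld$ satisfies $\dot\phi=2\pi+O(r)$ and $\dot r=O(r^2)$, so $\dot\phi>0$ near $r=0$; reparametrising each orbit by $\phi$ yields a scalar $2\pi$-periodic equation $dr/d\phi=v(\phi,r)$ with $v(\phi,0)\equiv0$, whose time-$2\pi$ map $\pi\colon(\RRR_{\geq0},0)\to(\RRR_{\geq0},0)$ is a smooth germ of a diffeomorphism fixing $0$ and tangent to the identity there (the return map of the genuinely rotational linearised flow is the identity). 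A $\Cinf$ equivalence of vector fields induces a $\Cinf$ conjugacy of such return maps, so the classification of $\AFld$ up to $\Cinf$ equivalence is controlled by $\pi$.

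Next I would pass to the formal normal form. In the complex coordinate $z=x+iy$ the resonant monomials for the $1$-jet $2\pi i\,z\,\dd{z}$ are exactly $z|z|^{2j}\dd{z}$, so $\AFld$ is formally conjugate, after a formal time reparametrisation, to $\dot z=z\sum_{j\geq0}c_j|z|^{2j}$, i.e. $\dot r=r\sum_{j\geq1}a_jr^{2j}$, $\dot\phi=\sum_{j\geq0}b_jr^{2j}$ with $b_0=2\pi$ and $c_j=a_j+ib_j$. Two cases occur. If all $a_j$ vanish (equivalently $\pi$ is tangent to the identity to infinite order), the formal model is $\beta(r^2)(-y\dd{x}+x\dd{y})$ with $\beta(0)\neq0$ --- the flat case. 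Otherwise let $k\geq1$ be minimal with $a_k\neq0$; rescaling $r$ normalises $|a_k|$ to $1$ and fixes $\delta=\pm1$ as its sign, and then the classical parabolic-germ computation --- the modulus of a parabolic germ of order $m$ lies in the $r^{2m-1}$ coefficient, here $m=2k+1$, hence $r^{4k+1}$ --- shows that the only resonant radial term surviving further formal normalisation is $\alpha r^{4k+1}$. Absorbing the remaining freedom into a formal nonvanishing multiplier yields the formal version of~\eqref{equ:Takens_nonflat}.

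Then I would realise everything smoothly. By Borel's lemma the formal coordinate change, time reparametrisation and multiplier are realised by genuine $\Cinf$ objects --- for the pieces living naturally in polar coordinates one can use Lemma~\ref{lm:flat_Zinv_func} to descend flat $\ZZZ$-invariant data on $\Hman$ to flat data on $\RRR^2$ --- after which $\gdif_{*}\AFld$ equals the appropriate normal form modulo a $\Cinf$ vector field flat at $\orig$. In the flat case this finishes the proof: writing the flat remainder as $\beta(x^2+y^2)\bigl(-\dAy\dd{x}+\dAx\dd{y}\bigr)$ is legitimate because $\beta(0)\neq0$ makes division by $\beta(x^2+y^2)$ preserve smoothness and flatness, and one lands exactly on~\eqref{equ:Takens_flat}.

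The main obstacle is the non-flat case, where the flat remainder must be removed by a further diffeomorphism tangent to the identity to infinite order. Here the radial rate $\dot r\sim\delta r^{2k+1}$ is genuinely attracting or repelling, and this non-neutrality is precisely what makes removal possible: I would join the right-hand side $\AFld_0$ of~\eqref{equ:Takens_nonflat} to $\gdif_{*}\AFld$ by the flat path $\AFld_t=\AFld_0+t(\gdif_{*}\AFld-\AFld_0)$ and apply the Moser homotopy method, which reduces to solving, for each $t$, a homological equation $L_{\AFld_t}X_t=\AFld_0-\gdif_{*}\AFld$ with $X_t$ a smooth vector field flat at $\orig$; solvability of this equation on flat vector fields rests on Sternberg--Chen type estimates coming from the non-vanishing radial rate, equivalently on the $\Cinf$ rigidity of parabolic germs of diffeomorphisms of $\RRR$ against flat perturbations --- for the details of this step I would simply cite Takens~\cite{Takens:AIF:1973}. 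Finally,~\eqref{equ:Takens_flat} and~\eqref{equ:Takens_nonflat} are not $\Cinf$ equivalent: the return map of~\eqref{equ:Takens_flat} is tangent to the identity to infinite order at $0$, whereas that of~\eqref{equ:Takens_nonflat} is $r+c\,r^{2k+1}+\cdots$ with $c\neq0$, and being infinitely tangent to the identity is invariant under $\Cinf$ conjugacy of germs, so no equivalence can exist.
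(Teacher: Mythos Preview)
The paper does not prove this theorem at all: it is stated with the attribution \cite{Takens:AIF:1973} and used as a black box (see \S\ref{sect:Takens_nf}), so there is no ``paper's own proof'' to compare your proposal against. Your sketch is a reasonable outline of the Poincar\'e--Dulac/Takens argument, and since you yourself defer the key analytic step (removal of the flat remainder in the non-flat case) back to \cite{Takens:AIF:1973}, you are effectively doing what the paper does --- citing Takens --- only with more commentary around it.
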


For instance the following vector field 
$$
\AFld(x,y) = -\nu y \dd{x} + \nu x\dd{y} +  \ \cdots \ \text{terms of high order} \ \cdots.
$$
satisfies assumptions of this theorem.

$\Cinf$-equivalence classes of such vector fields were studied in~\cite{Belitsky:FA:85}.

\subsection{Expressions in polar coordinates}
Our aim is to show that a vector field of type~\eqref{equ:Takens_nonflat} does not admit a strong first integral, see Corollary~\ref{cor:Takens_nf_polar}.
We need for this the expressions of~\eqref{equ:Takens_flat} and~\eqref{equ:Takens_nonflat} in polar coordinates, see also~\cite{ArnoldYllyashenko}.
\begin{lemma}\label{lm:Takens_nf1_polar}
Let 
\begin{equation}\label{equ:Takens_nf1_xy}
\AFld_1(x,y) = -(y+\dAy)\dd{x} + (x+\dAx) \dd{y},
\end{equation}
where $\dAx,\dAy\in\FlatOR$.
Then its expression in polar coordinates is the following vector field:
\begin{equation}\label{equ:Takens_nf1_polar}
\BFld_1(\phi,\rrho) = (1+\dAp)\dd{\phi} + \dAr\dd{\rrho},
\end{equation}
where $\dAp,\dAr\in\FlatZHR$.
\end{lemma}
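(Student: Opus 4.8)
The plan is to substitute the coordinate functions of $\AFld_1$ into the change of variables formula~\eqref{equ:polar_expression} and then to verify that the two resulting ``error terms'' lie in $\FlatZHR$.

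The $\dd{x}$- and $\dd{y}$-components of $\AFld_1$ are $\AFld_{x}=-(y+\dAy)$ and $\AFld_{y}=x+\dAx$. Writing $(x,y)=P(\phi,\rrho)=(\rrho\cos\phi,\rrho\sin\phi)$ and inserting this into~\eqref{equ:polar_expression}, the ``pure rotation'' part $-y\,\dd{x}+x\,\dd{y}$ contributes exactly $\dd{\phi}$ (directly: the $\phi$-component becomes $\sin^{2}\phi+\cos^{2}\phi=1$ and the $\rrho$-component becomes $0$), while, by linearity of~\eqref{equ:polar_expression} in $(\AFld_x,\AFld_y)$, the flat perturbation $\AFld_x=-\dAy$, $\AFld_y=\dAx$ contributes
$$
\dAp=\frac{1}{\rrho}\bigl(\cos\phi\cdot(\dAx\circ P)+\sin\phi\cdot(\dAy\circ P)\bigr),
\qquad
\dAr=\sin\phi\cdot(\dAx\circ P)-\cos\phi\cdot(\dAy\circ P).
$$
Hence~\eqref{equ:Takens_nf1_polar} holds once we check that $\dAp,\dAr\in\FlatZHR$.

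I would first observe that $\dAx\circ P$ and $\dAy\circ P$ belong to $\FlatZHR$. They are $\Cinf$ on all of $\Hman$ since $P:\Hman\to\RRR^{2}$ is $\Cinf$; they are $\ZZZ$-invariant since $P\circ\eta=P$; and they are flat on $\dHman$ because $P$ maps $\dHman$ to $\orig$, so by the chain rule every partial derivative of $\dAx\circ P$ at a point of $\dHman$ is a polynomial expression in the partial derivatives of $\dAx$ at $\orig$, all of which vanish because $\dAx\in\FlatOR$; similarly for $\dAy$. Since $\FlatZHR$ is closed under multiplication by $\Cinf$ $\ZZZ$-invariant functions --- in particular by $\sin\phi$ and $\cos\phi$ --- it follows immediately that $\dAr\in\FlatZHR$.

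It remains to handle $\dAp$, which is the only delicate point: dividing a function of $\FlatZHR$ that vanishes on $\dHman$ by $\rrho$ must again give a function of $\FlatZHR$. Set $u=\cos\phi\cdot(\dAx\circ P)+\sin\phi\cdot(\dAy\circ P)\in\FlatZHR$; in particular $u(\phi,0)=0$. By Hadamard's lemma $u(\phi,\rrho)=\rrho\,v(\phi,\rrho)$ for a unique $\Cinf$ function $v$ on $\Hman$, namely $v(\phi,\rrho)=\int_{0}^{1}\frac{\partial u}{\partial\rrho}(\phi,t\rrho)\,dt$. From this integral representation $v$ is $\ZZZ$-invariant (because $u$ is) and flat on $\dHman$ (because $\partial u/\partial\rrho$ is, and differentiating under the integral sign only introduces extra bounded factors $t^{k}$ before one sets $\rrho=0$). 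Hence $\dAp=v\in\FlatZHR$, which proves the lemma. The expected obstacle is exactly this last ``division by $\rrho$'' step: one must ensure not merely that the quotient extends smoothly across $\dHman$ but that it stays flat there, and this is where the hypothesis $\dAx,\dAy\in\FlatOR$ (rather than a finite order of vanishing) is genuinely needed; the argument is routine only because flat functions vanish to infinite order.
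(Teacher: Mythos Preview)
Your proof is correct and follows exactly the route the paper intends: the paper states only that the lemma is a direct consequence of~\eqref{equ:polar_expression} and leaves the computation to the reader, and you have supplied precisely those details, including the one genuinely nontrivial point (division by $\rrho$ in $\dAp$ preserves membership in $\FlatZHR$), which is handled correctly via Hadamard's lemma and differentiation under the integral.
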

\begin{lemma}\label{lm:Takens_nf2_polar}
Let 
\begin{multline*}
\AFld_2(x,y) = 
-2\pi y \frac{\partial}{\partial x} + 2\pi x \frac{\partial}{\partial y} +  \\  +
\left(\delta(x^2+y^2)^k + \alpha(x^2+y^2)^{2k}\right)
\left( x\frac{\partial}{\partial x}+y\frac{\partial}{\partial y} \right),
\end{multline*}
where $\delta=\pm1$, $k\in\NNN$, and $\alpha\in\RRR$.
Then its expression in polar coordinates is the following vector field:
\begin{equation}\label{equ:Takens_nf2_polar}
\BFld_2(\phi,\rrho) = 2\pi \dd{\phi} + \rrho^{2k+1}(\delta+\alpha\rrho^{2k})\dd{\rrho}.
\end{equation}
\end{lemma}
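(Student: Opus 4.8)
The plan is to prove Lemma~\ref{lm:Takens_nf2_polar} by a direct substitution into the change-of-coordinates formula~\eqref{equ:polar_expression} of \S\ref{sect:polar_coordinates}. Write $\AFld_2=\AFld_x\dd{x}+\AFld_y\dd{y}$ and abbreviate $g(\rrho):=\delta\rrho^{2k}+\alpha\rrho^{4k}$. Since $x^2+y^2=\rrho^2$ along $\im P$, we get
$$
\AFld_x\circ P=-2\pi\rrho\sin\phi+g(\rrho)\,\rrho\cos\phi,
\qquad
\AFld_y\circ P=2\pi\rrho\cos\phi+g(\rrho)\,\rrho\sin\phi.
$$
Plugging these into the matrix identity~\eqref{equ:polar_expression} and using $\cos^2\phi+\sin^2\phi=1$, the cross terms cancel: the rotational part $-2\pi y\dd{x}+2\pi x\dd{y}$ contributes $2\pi$ to $\BFld_{\phi}$ and $0$ to $\BFld_{\rrho}$, while the radial part $g(\rrho)(x\dd{x}+y\dd{y})$ contributes $0$ to $\BFld_{\phi}$ and $g(\rrho)\rrho=\rrho^{2k+1}(\delta+\alpha\rrho^{2k})$ to $\BFld_{\rrho}$. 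Summing gives exactly~\eqref{equ:Takens_nf2_polar}.

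Equivalently --- and this is the shortcut I would actually write out --- one observes directly that in polar coordinates the Euler field pulls back as $x\dd{x}+y\dd{y}=\rrho\dd{\rrho}$ and the rotation field as $-y\dd{x}+x\dd{y}=\dd{\phi}$, so that $\AFld_2=2\pi\dd{\phi}+g(\rrho)\,\rrho\dd{\rrho}$ with essentially no computation. In either presentation one should remark that formula~\eqref{equ:polar_expression} is a priori valid only on $\IHman$, where $P$ is a local diffeomorphism, but the resulting coefficients $2\pi$ and $\rrho^{2k+1}(\delta+\alpha\rrho^{2k})$ are polynomials in $\rrho$; hence $\BFld_2$ extends smoothly (and $\ZZZ$-invariantly) to all of $\Hman$, as asserted.

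There is no genuine obstacle here: the statement is an elementary verification, and the only point deserving a word of justification is the smooth extension across $\dHman$, which is immediate because the factor $1/\rrho$ appearing in~\eqref{equ:polar_expression} is absorbed by the overall $\rrho$ present in $x=\rrho\cos\phi$, $y=\rrho\sin\phi$. The companion Lemma~\ref{lm:Takens_nf1_polar} is handled by the same substitution, the only difference being that there it is the flatness of $\dAx,\dAy$ at $\orig$ (rather than polynomiality) that compensates the $1/\rrho$ and yields coefficients $\dAp,\dAr\in\FlatZHR$.
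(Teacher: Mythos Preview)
Your proposal is correct and follows exactly the route the paper indicates: the paper itself omits the proof of Lemmas~\ref{lm:Takens_nf1_polar} and~\ref{lm:Takens_nf2_polar}, stating only that they ``are direct consequences of~\eqref{equ:polar_expression} and we leave them for the reader.'' You have simply supplied those details (with the additional helpful remark on the smooth extension across $\partial\Hman$), so there is nothing to compare.
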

These lemmas are direct consequences of~\eqref{equ:polar_expression} and we leave them for the reader.

It follows that $\BFld_1$ and $\BFld_2$ are $\Cinf$ on all of $\Hman$, tangent to $\dHman$, and have no singular points.

\begin{corollary}\label{cor:Takens_nf_polar}
The origin $\orig$ is the limit point for each orbit of $\AFld_2$ passing sufficiently close to $\orig$.
Hence every function $\func:\RRR^2\to\RRR$ which is constant along orbits of $\AFld$ must be constant near $\orig$.
\end{corollary}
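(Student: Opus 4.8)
\textit{Proof proposal.} The plan is to read off everything from the polar expression given by Lemma~\ref{lm:Takens_nf2_polar}: the vector field $\AFld_2$ lifts via $P$ to
$$
\BFld_2(\phi,\rrho) = 2\pi \dd{\phi} + \rrho^{2k+1}(\delta+\alpha\rrho^{2k})\dd{\rrho}
$$
on $\Hman$. First I would fix $\rrho_0>0$ so small that $|\alpha|\,\rrho_0^{2k}<1$; then for every $\rrho\in(0,\rrho_0)$ the scalar $\delta+\alpha\rrho^{2k}$ has the same sign as $\delta$ and satisfies $|\delta+\alpha\rrho^{2k}|\geq 1-|\alpha|\rrho_0^{2k}>0$, so the $\rrho$-component of $\BFld_2$ is nonzero there and keeps the sign of $\delta$. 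Put $s=-\delta\in\{+1,-1\}$. Along any integral curve $(\phi(t),\rrho(t))$ of $\BFld_2$ the coordinate $\phi$ grows at the constant rate $2\pi$, while $\rrho(t)$ is strictly monotone for as long as $0<\rrho(t)<\rrho_0$, decreasing when the time runs in the direction of $s$. Starting from a point with $0<\rrho(0)=\rrho_1<\rrho_0$ and letting $t\to s\cdot\infty$, the value $\rrho(t)$ stays in $(0,\rrho_1]$; hence the curve is defined for all such $t$ (it remains in a compact part of $\Hman$, and $\phi$ cannot blow up) and $\rrho(t)$ converges to some $\ell\in[0,\rrho_1]$.

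The first genuinely nontrivial point is to rule out $\ell>0$. If $\ell>0$, then on the compact set $\{\ell/2\leq\rrho\leq\rrho_1\}$ the quantity $\rrho^{2k+1}(\delta+\alpha\rrho^{2k})$ is bounded away from $0$, so $|\dot\rrho(t)|$ is bounded below by a positive constant once $\rrho(t)$ is within that range, contradicting convergence of $\rrho(t)$. Hence $\ell=0$.

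Next I would transport this back to $\RRR^2$. If $\orb$ is an orbit of $\AFld_2$ meeting the punctured disk $\{0<|z|<\rrho_0\}$ at a point $z$, then $\orb$ lifts through the covering $P:\IHman\to\RRR^2\setminus\{\orig\}$ to an integral curve of $\BFld_2$ passing through a point with $\rrho$-coordinate $|z|<\rrho_0$; by the above, this curve has $\rrho(t)\to 0$ as $t\to s\cdot\infty$. Since $|P(\phi,\rrho)|=\rrho$, we get $P(\phi(t),\rrho(t))\to\orig$ along $\orb$, so $\orig\in\overline{\orb}$. This proves the first assertion. For the second, let $\func:\RRR^2\to\RRR$ be continuous (which covers any $\Cinf$ strong first integral) and constant along orbits of $\AFld_2$. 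For each orbit $\orb$ meeting $\{0<|z|<\rrho_0\}$ the restriction $\func|_{\orb}$ is a constant $c_{\orb}$; since $\orig\in\overline{\orb}$ and $\func$ is continuous, $c_{\orb}=\func(\orig)$. Every $z$ with $0<|z|<\rrho_0$ lies on such an orbit, so $\func(z)=\func(\orig)$, and therefore $\func\equiv\func(\orig)$ on $\{|z|<\rrho_0\}$, i.e. $\func$ is constant near $\orig$.

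I expect no serious obstacle beyond the two routine points just isolated: the elementary ODE fact that a bounded monotone solution of an autonomous equation converges to a zero of the right-hand side (used here, together with the nonvanishing of the $\rrho$-component near $\rrho=0$, to force the limit to be $0$), and careful bookkeeping with the covering map $P$ so that an orbit of $\AFld_2$ in the punctured plane really does lift to a full integral curve of $\BFld_2$ with the claimed asymptotics.
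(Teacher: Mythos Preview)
Your argument is correct and follows the same approach as the paper: pass to the polar lift $\BFld_2$ from Lemma~\ref{lm:Takens_nf2_polar}, observe that near $\dHman$ the $\rrho$-coordinate is strictly monotone along trajectories and tends to $0$ in one time direction, and project back via $P$. The only cosmetic difference is that the paper first divides $\BFld_2$ by the nonvanishing factor $\delta+\alpha\rrho^{2k}$ so that the $\rrho$-component becomes simply $\rrho^{2k+1}$ (making the relevant time direction uniformly $t\to-\infty$ regardless of $\delta$), whereas you work directly with $\BFld_2$ and keep track of the sign via $s=-\delta$; your version is slightly more explicit, and your treatment of the ``constant near $\orig$'' consequence is more detailed than the paper's.
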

\begin{proof}
Denote $\afunc(\phi,\rrho)=\delta+\alpha\rrho^{2k}$.
Then $\afunc$ is non-zero on some neighbourhood of $\dHman$, whence the following vector field
$$
\BFld=\frac{1}{\afunc}\BFld_2 = \frac{2\pi}{\afunc} \dd{\phi} + \rrho^{2k+1}\dd{\rrho}.
$$ 
is $\Cinf$ and has the same orbit structure as $\BFld_2$.
\begin{figure}[ht]
\centerline{\includegraphics[height=2cm]{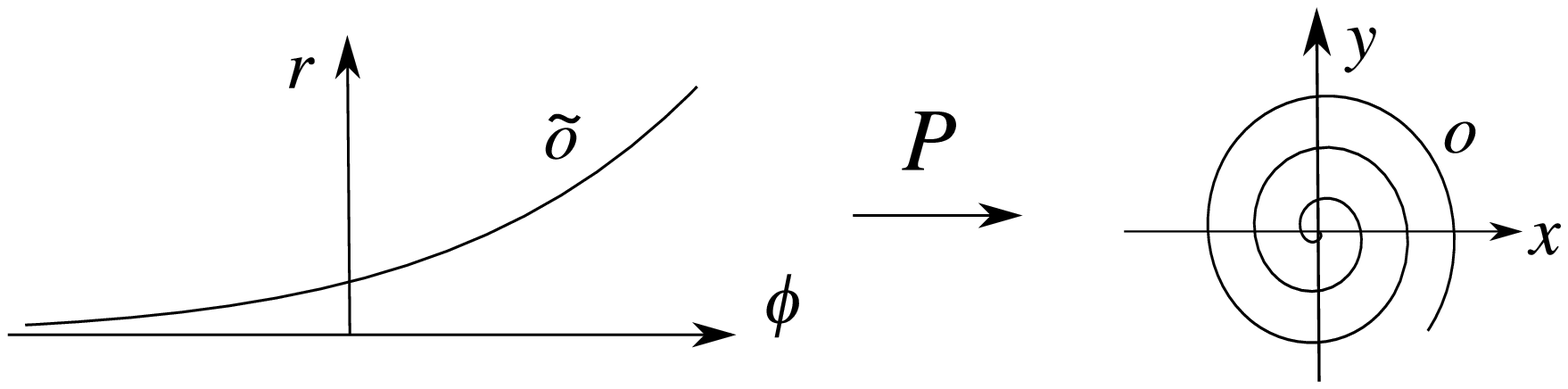}}
\caption{}\label{fig:orb_case2}
\end{figure}
It is now evident, that if $\rrho>0$ is sufficiently small, then for each $\phi\in\RRR$, the orbit $\torb$ of the point $\tz=(\phi,\rrho)$ tends to $\dHman$ when $t\to-\infty$.
Therefore the orbit $\orb=P(\torb)$ of the point $P(\tz)=\rrho e^{i\phi}$ tends to $\orig$, see Figure~\ref{fig:orb_case2}.
\end{proof}

\subsection{Flat perturbations}
Let $\BFld$ be a vector field on $\Hman$ given by~\eqref{equ:Takens_nf1_polar}:
$$
\BFld(\phi,\rrho) = (1+\dAp)\,\dd{\phi} + \dAr\,\dd{\rrho},
$$
where $\dAp,\dAr\in\FlatHR$.
Then it is easy to see that $\BFld$ is tangent to $\dHman$ and has no singular points on some neighbourhood of $\dHman$.

For simplicity assume that $\BFld$ generates a global flow $$\BFlow=(\BP,\BR):\Hman\times\RRR\to\Hman.$$

\begin{lemma}\label{lm:sect_ShB}
Suppose that $\thdif=(\thp,\thr)\in\EBpl$ has a shift function $\tsigma$ with respect to $\BFld$.
Then 
\begin{gather}
\label{equ:coord_func_shift_P}
\thp(\phi,\rrho) = \phi + \tsigma(\phi,\rrho) + \xi_{\phi}(\phi,\rrho),\\
\label{equ:coord_func_shift_R}
\thr(\phi,\rrho) = \rrho + \xi_{\rrho}(\phi,\rrho),
\end{gather}
where $\xi_{\phi},\xi_{\rrho}\in\FlatHR$.

The coordinate functions of $\BFlow$ have the following form:
\begin{gather}
\label{equ:BFlow_t_P}
\BP(\phi,\rrho,t)=\phi + t + \mu_{\phi}(\phi,\rrho,t),
\\
\label{equ:BFlow_t_R}
\BR(\phi,\rrho,t)=\rrho + \mu_{\rrho}(\phi,\rrho,t),
\end{gather}
where $\mu_{\phi},\mu_{\rrho}:\Hman\times\RRR\to\RRR$ are $\Cinf$ functions flat on $\dHman\times\RRR$.

If, in addition, $\dAp,\dAr:\Hman\to\RRR$ are $\ZZZ$-invariant and $\thdif\in\EBZpl$, then $\tsigma,\,\xi_{\phi},\,\xi_{\rrho}\,\in\,\CiZHR$.
\end{lemma}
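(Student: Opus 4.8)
The plan is to prove the flow formulas \eqref{equ:BFlow_t_P}--\eqref{equ:BFlow_t_R} first, and then deduce \eqref{equ:coord_func_shift_P}--\eqref{equ:coord_func_shift_R} by substituting the shift function into them. Since $\dAp,\dAr$ are flat on $\dHman$, they and all their partial derivatives vanish on $\{\rrho=0\}$; in particular $\dAr\equiv0$ and $\dAp\equiv0$ on $\dHman$, so the $\dd{\rrho}$-component of $\BFld$ vanishes there and the $\dd{\phi}$-component equals $1$. Hence $\dHman$ is invariant under $\BFlow$ and $\BFlow(\phi,0,t)=(\phi+t,0)$. Setting $\mu_{\phi}=\BP-\phi-t$ and $\mu_{\rrho}=\BR-\rrho$, both functions vanish identically on $\dHman\times\RRR$, so all their derivatives tangent to $\dHman\times\RRR$ vanish there as well, and it remains to show $\partial_{\rrho}^{k}\mu_{\phi}$ and $\partial_{\rrho}^{k}\mu_{\rrho}$ vanish on $\{\rrho=0\}$ for every $k\geq1$. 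I would do this by induction on $k$ using the variational equations: differentiating the flow ODE
$$
\partial_{t}\BP = 1 + \dAp(\BP,\BR),\qquad \partial_{t}\BR = \dAr(\BP,\BR)
$$
$k$ times in $\rrho$ (the mixed partials commute by smoothness of $\BFlow$) and restricting to $\rrho=0$, the higher chain rule (Fa\`a di Bruno) expresses $\partial_{\rrho}^{k}[\dAp(\BP,\BR)]$ and $\partial_{\rrho}^{k}[\dAr(\BP,\BR)]$ as finite sums of terms each carrying a factor $(\partial^{\gamma}\dAp)(\BP,\BR)$ or $(\partial^{\gamma}\dAr)(\BP,\BR)$ with $|\gamma|\geq1$; evaluated at $\rrho=0$ the argument is $(\phi+t,0)\in\dHman$, where all such derivatives vanish by flatness. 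Thus on $\{\rrho=0\}$ the functions $\partial_{\rrho}^{k}\BP$, $\partial_{\rrho}^{k}\BR$ satisfy the $t$-ODE with zero right-hand side, with initial data $\partial_{\rrho}^{k}\phi=0$ and $\partial_{\rrho}^{k}\rrho$ (equal to $1$ for $k=1$ and $0$ for $k\geq2$); hence $\partial_{\rrho}^{k}\BP|_{\rrho=0}\equiv0$ ($k\geq1$), $\partial_{\rrho}\BR|_{\rrho=0}\equiv1$, $\partial_{\rrho}^{k}\BR|_{\rrho=0}\equiv0$ ($k\geq2$), which is exactly flatness of $\mu_{\phi}$ and $\mu_{\rrho}$ on $\dHman\times\RRR$.

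For the shift function formulas, note that $\BFld$ has no singular points in a neighbourhood of $\dHman$, so by Lemma~\ref{lm:shift-maps-without-sing-R2} the shift function $\tsigma$ of $\thdif$ is $\Cinf$ (and uniquely determined) near $\dHman$; I will take $\tsigma\in\Ci{\Hman}{\RRR}$. From $\thdif(\phi,\rrho)=\BFlow(\phi,\rrho,\tsigma(\phi,\rrho))$ and the formulas just proved,
$$
\thp = \phi + \tsigma + \mu_{\phi}(\phi,\rrho,\tsigma),\qquad \thr = \rrho + \mu_{\rrho}(\phi,\rrho,\tsigma),
$$
so one sets $\xi_{\phi}=\mu_{\phi}(\phi,\rrho,\tsigma(\phi,\rrho))$ and $\xi_{\rrho}=\mu_{\rrho}(\phi,\rrho,\tsigma(\phi,\rrho))$, which are $\Cinf$. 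The map $(\phi,\rrho)\mapsto(\phi,\rrho,\tsigma(\phi,\rrho))$ carries $\dHman$ into $\dHman\times\RRR$, and $\mu_{\phi},\mu_{\rrho}$ are flat on $\dHman\times\RRR$; so by the chain rule every partial derivative of $\xi_{\phi},\xi_{\rrho}$ vanishes on $\dHman$, i.e. $\xi_{\phi},\xi_{\rrho}\in\FlatHR$.

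For the $\ZZZ$-equivariant case, if $\dAp,\dAr$ are $\ZZZ$-invariant then the translation $\eta(\phi,\rrho)=(\phi+2\pi,\rrho)$ satisfies $\eta_{*}\BFld=\BFld$ (it preserves $\dd{\phi}$ and $\dd{\rrho}$), hence $\BFlow(\eta(z),t)=\eta(\BFlow(z,t))$, i.e. $\BP(\phi+2\pi,\rrho,t)=\BP(\phi,\rrho,t)+2\pi$ and $\BR(\phi+2\pi,\rrho,t)=\BR(\phi,\rrho,t)$; therefore $\mu_{\phi},\mu_{\rrho}$ are $2\pi$-periodic in $\phi$. If moreover $\thdif\in\EBZpl$, then $\thdif(\eta(z))=\eta(\thdif(z))=\BFlow(\eta(z),\tsigma(z))$, while also $\thdif(\eta(z))=\BFlow(\eta(z),\tsigma(\eta(z)))$; since $\BFld$ is non-singular near $\dHman$ the shift function is unique, so $\tsigma(\eta(z))=\tsigma(z)$ and $\tsigma\in\CiZHR$. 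Finally $\xi_{\phi},\xi_{\rrho}$, being compositions of $\ZZZ$-invariant data, lie in $\CiZHR$ as well.

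I expect the one genuinely delicate step to be the induction of the first paragraph: keeping track, in the Fa\`a di Bruno expansion of $\partial_{\rrho}^{k}[\dAp(\BP,\BR)]$ and $\partial_{\rrho}^{k}[\dAr(\BP,\BR)]$, that \emph{every} summand really contains a derivative of $\dAp$ or $\dAr$ of positive order (so that flatness kills it on $\{\rrho=0\}$), and that the resulting $t$-ODE for the restricted $\rrho$-derivatives has vanishing right-hand side. Everything after that reduces to the chain rule and to the equivariance bookkeeping.
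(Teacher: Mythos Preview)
Your argument is correct and complete; the Fa\`a di Bruno/variational-equation induction works exactly as you describe, since every term in $\partial_{\rrho}^{k}[\dAp(\BP,\BR)]$ carries a factor $(\partial^{\gamma}\dAp)(\BP,\BR)$ with $|\gamma|\geq1$, which vanishes on $\{\rrho=0\}$ by flatness, and the resulting constant-in-$t$ behaviour of the restricted $\rrho$-derivatives gives precisely the flatness of $\mu_{\phi},\mu_{\rrho}$. However, your route is genuinely different from the paper's. The paper does not argue via variational equations; instead it writes down an explicit integral representation
\[
\tsigma(\phi,\rrho)=\int_{\phi}^{\thp(\phi,\rrho)}\frac{ds}{1+\dAp(s,\rrho)}
=\thp-\phi+\xi_{\phi},
\qquad
\xi_{\phi}=\int_{\phi}^{\thp}\frac{\dAp(s,\rrho)\,ds}{1+\dAp(s,\rrho)},
\]
reads off flatness of $\xi_{\phi}$ directly from flatness of $\dAp$, and obtains analogous integral formulas for $\mu_{\phi},\mu_{\rrho}$ by applying the same computation to $\BFlow_{t}$; $\ZZZ$-invariance is then checked by the substitution $s\mapsto s+2\pi$ in the integral. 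Your approach is more conceptual and sidesteps the (slightly delicate) fact that this integral is taken at constant $\rrho$ rather than along the true orbit; the paper's approach, on the other hand, furnishes an explicit closed-form for $\xi_{\phi}$ that is reused verbatim later (in the proof of (c)$\Rightarrow$(a) of Theorem~\ref{th:charact_period_shift_maps}) to exhibit the period function as $2\pi+\text{flat}$. Either method yields everything needed downstream.
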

\begin{proof}[Proof of~\eqref{equ:coord_func_shift_P}.]
Notice that $\BFld$ defines the following autonomous system of ODE:
$$
\ddd{\phi}{t}=1+\dAp, \qquad \ddd{\rrho}{t}=\dAr.
$$
Let $\thdif=(\thp,\thr)\in\EBpl$ and $z=(\phi,\rrho)\in\Hman$.
If $\rrho$ is sufficiently small, then the shift function for $\thdif$ near $z$ with respect to $\BFld$ can be calculated by the following formula:
\begin{multline}\label{equ:tsigma}
\tsigma(\phi,\rrho)  = 
\int\limits_{\phi}^{\thp(\phi,\rrho)} \frac{d s}{1 + \dAp(s,\rrho)} = 
 \int\limits_{\phi}^{\thp(s,\rrho)} ds + 
\underbrace{ \int\limits_{\phi}^{\thp(\phi,\rrho)} \frac{\dAp(s,\rrho)\, ds }{1 + \dAp(s,\rrho)} }_{\xi_{\phi}(\phi,\rrho)}=\\ =
\thp(\phi,\rrho)-\phi + \xi_{\phi}(\phi,\rrho).
\end{multline}
Evidently, $\xi_{\phi}$ is $\Cinf$.
Moreover, since $\dAp$ is flat on $\dHman$, it follows that so is $\xi_{\phi}$.
This proves~\eqref{equ:coord_func_shift_P}.

{\em Proof of~\eqref{equ:BFlow_t_P} and~\eqref{equ:BFlow_t_R}.}
Applying the previous arguments to $\BFlow_{t}$ and its shift function $t$, we obtain:
$\BP(\phi,\rrho,t)=\phi + t + \mu_{\phi}(\phi,\rrho,t)$, where 
$$ 
\mu_{\phi}(\phi,\rrho,t) = 
\int\limits_{\phi}^{\BP(\phi,\rrho,t)} \frac{\dAp(s,\rrho)\, ds }{1 + \dAp(s,\rrho)}.
$$ 
Moreover, 
$$ 
\mu_{\rrho}(\phi,\rrho,t)=\BR(\phi,\rrho,t)-\rrho =
\int\limits_{0}^{t} \dAr(\BP(\phi,\rrho,s),\BR(\phi,\rrho,s))ds.
$$ 
We claim that $\mu_{\phi}$ and  $\mu_{\rrho}$ are flat on $\dHman\times\RRR$, i.e. at each point $(\phi,0,t)$.

Indeed, since $\dAp$ is flat on $\dHman$, it is easy to see that for any $a,b,c\geq0$
\begin{equation}\label{equ:part_defiv_mu_phi}
\frac{\partial^{a+b+c}\mu_{\phi}}{\partial\phi^a\,\partial\rrho^b\,\partial t^c}
=
\sum_{i} \alpha_{i}(\phi,\rrho,t) + \sum_{j} \int\limits_{\phi}^{\BP(\phi,\rrho,t)} \beta_{j}(s,\rrho)ds,
\end{equation}
\begin{equation}\label{equ:part_defiv_mu_rrho}
\frac{\partial^{a+b+c}\mu_{\rrho}}{\partial\phi^a\,\partial\rrho^b\,\partial t^c}
=
\sum_{k} \gamma_{k}(\phi,\rrho,t) + \sum_{l} \int\limits_{0}^{t} \delta_{l}(s,\rrho)ds,
\end{equation}
where each sum is finite and $\alpha_i$, $\beta_{j}$, $\gamma_{k}$, $\delta_{l}$ are linear combinations with smooth coefficients of partial derivatives of $\dAp$ and $\dAr$ up to order $a+b+c$.
This implies that~\eqref{equ:part_defiv_mu_phi} and~\eqref{equ:part_defiv_mu_rrho} vanish for $\rrho=0$ since $\dAp$ and $\dAr$ are flat on $\dHman$.
Hence $\mu_{\phi}$ and $\mu_{\rrho}$ are flat on $\dHman$.

\medskip 

{\em Proof of~\eqref{equ:coord_func_shift_R}.}
Since $\mu_{\rrho}$ is flat at each point $(\phi,0,\rrho)$, it follows that
$$
\xi_{\rrho}(\phi,\rrho)= \thr(\phi,\rrho) - \rrho=
\BR(\phi,\rrho,\tsigma(\phi,\rrho))-\rrho = 
\mu_{\rrho}(\phi,\rrho,\tsigma(\phi,\rrho))
$$
is flat at $(\phi,0)$.

\medskip

{\em Verification of $\ZZZ$-invariantness.}
Suppose that $\thdif\in\EBZpl$, so
$$
\thp(\phi+2\pi,\rrho)=\thp(\phi,\rrho)+2\pi,
\quad 
\thr(\phi+2\pi,\rrho)=\thr(\phi,\rrho).
$$
Then it is easy to show, see~\cite[Lm.\;6.1]{Maks:Hamvf:2006}, that the functions
$$
\thp-\phi=\tsigma + \xi_{\phi}, \qquad 
\thr-\rrho=\xi_{\rrho}
$$
are $\ZZZ$-invariant.

If in addition $\dAp,\dAr$ are $\ZZZ$-invariant, then $\dAp(s+2\pi,\rrho)=\dAp(s,\rrho)$ and it follows from~\eqref{equ:tsigma}
that
\begin{multline*}
\xi_{\phi}(\phi+2\pi,\rrho)=
\int\limits_{\phi+2\pi}^{\thp(\phi+2\pi,\rrho)} \frac{\dAp(s,\rrho)\, ds }{1 + \dAp(s,\rrho)} = 
\int\limits_{\phi+2\pi}^{\thp(\phi,\rrho)+2\pi} \frac{\dAp(s,\rrho)\, ds }{1 + \dAp(s,\rrho)} =  \\ =
\Bigl| 
\begin{matrix}
\text{substitute} \\ s=s'+2\pi 
\end{matrix}
\Bigr| = 
\int\limits_{\phi}^{\thp(\phi,\rrho)} \frac{\dAp(s'+2\pi,\rrho)\, ds' }{1 + \dAp(s'+2\pi,\rrho)} = 
\int\limits_{\phi}^{\thp(\phi,\rrho)} \frac{\dAp(s',\rrho)\, ds' }{1 + \dAp(s',\rrho)} = \\ =
\xi_{\phi}(\phi,\rrho),
\end{multline*}
whence $\tsigma$ is also $\ZZZ$-invariant.
\end{proof}

\section{Proof of Theorem~\ref{th:charact_period_shift_maps}}\label{sect:proof:th:charact_period_shift_maps}
(a)$\Rightarrow$(b)
Notice that property (b) depends neither on a particular choice of local coordinates at $\orig$ nor on a reparametrization of $\AFld$.
More precisely, let $\hdif:(\disk,\orig)\to(\disk,\orig)$ be a germ of diffeomorphism at $\orig$ and $\beta:\disk\to\RRR\setminus\{0\}$ be everywhere non-zero $\Cinf$ function.
Denote $\BFld = \hdif_{*}(\beta\AFld)$.
Let also $A=J(\hdif,\orig)$ be the Jacobi matrix of $\hdif$ at $\orig$, and let $\lambda_1,\lambda_2$ be the eigen values of the matrix $\nabla\AFld(\orig)$.
Then by~\eqref{equ:nabla_for_pushforward}
 $$\nabla\BFld(\orig) = \beta(\orig) \,\,\cdot\,\, A \cdot \nabla\AFld(\orig) \cdot A^{-1}.$$
Hence the eigen values of $\nabla\BFld(\orig)$ are $\beta(\orig)\lambda_1$ and $\beta(\orig)\lambda_2$.
Therefore $\AFld$ satisfies (b) iff so does $\BFld$.

Suppose now that the period function $\theta:\disk\to(0,+\infty)$ is smooth on all of $\disk$.
Then $\AFlow(z,\theta(z))\equiv z$ for all $z\in\disk$, i.e. $\theta\in\kerA \not=\{0\}$.
Thus $\ShA$ is periodic, whence $1$-jet of $\AFld$ at $\orig$ is non-zero, (see paragraph before Remark~\ref{rem:eigen_val_vanish}).

Consider the following vector field $\BFld=\theta\AFld$.
Let also $\BFlow:\disk\times\RRR\to\disk$ be the flow of $\BFld$.
Then by~\cite{Maks:ReparamShMap} $\BFlow(z,1)\equiv z$ for all $z\in\disk$, i.e. $\BFlow_{1}=\id_{\disk}$.
Hence $\BFlow$ induces the following smooth circle-action $\disk$:
$$
\Gamma: \disk \times S^1 \to \disk,
\qquad \Gamma(z,e^{2\pi i t}) = \BFlow(z,t),
$$
where $S^1$ is regarded as the group $\RRR/\ZZZ$.
Then on the tangent space $T_{\orig}\disk$ we obtain a linear circle action induced by $T_{\orig}\BFlow_t$.
This action is non-trivial since $\jet\BFld=\theta(\orig)\cdot\jet\AFld\not=0$.
Thus we obtain a monomorphism $S^1\to \GLR{2}=\Aut( T_{\orig}\disk )$.

Since every circle subgroup in $\GLR{2}$ is conjugate with $SO(2)$, there exists a linear automorphism $\hdif:\RRR^2\to\RRR^2$ such that 
$$
\hdif\circ \BFlow_t\circ \hdif^{-1}(x,y) =
\left(
 \begin{matrix}
\cos 2\pi t & \sin 2\pi t \\
-\sin 2\pi t & \cos 2\pi t 
\end{matrix}
\right) 
\cdot
\left(
 \begin{matrix} x\\ y \end{matrix}
\right)  + o_t(\sqrt{x^2+y^2}),
$$
whence
$$
\hdif_{*}\BFld(x,y) = -y \frac{\partial}{\partial x} + -x \frac{\partial}{\partial y} + \ \cdots \ \text{terms of high order} \ \cdots.
$$
Notice that $\nabla\hdif_{*}\BFld(\orig) = \left(\begin{smallmatrix} 0 & -1 \\ 1 & 0 \end{smallmatrix}\right)$ and its eigen values are $\pm i$.
As noted above the eigen values of $\nabla\AFld(\orig)$ are $\pm i /\theta(\orig)$.

(b)$\Rightarrow$(c)
In this case $\AFld$ satisfies assumptions of \S\ref{sect:Takens_nf} and therefore can be reduced to one of Takens' normal forms either~\eqref{equ:Takens_flat} or~\eqref{equ:Takens_nonflat}.
But by Corollary~\ref{cor:Takens_nf_polar} in the case \eqref{equ:Takens_nonflat} $\AFld$ has non-closed orbits near $\orig$, which contradicts to the assumption.
Thus by reparametrization and change of coordinated $\AFld$ can be reduced to the form~\eqref{equ:Takens_flat}.

(c)$\Rightarrow$(a)
Let $\eta(\phi,\rrho)=(\phi+2\pi, \rrho)$. 
This map is a lifting of the identity map $\id_{\disk}$.
Since each regular orbit $\orb$ of $\AFld$ is a simple closed curves wrapping around the origin, it follows that $P^{-1}(\orb)$ is diffeomorphic with $\RRR^1$, and in particular it is an orbit of $\BFld$.
Since $P\circ\eta=P$, we obtain $\eta^{-1}P^{-1}(\orb)=P^{-1}(\orb)$, i.e. $\eta$ preserves orbits of $\BFld$.
Hence $\eta\in\EBZpl$.
Then its shift function is given by~\eqref{equ:tsigma}:
$$
\bar\theta(\phi,\rrho)=
\int\limits_{\phi}^{\phi+2\pi} \frac{d s}{1 + \dAp(s,\rrho)} = 2\pi + \txi(\phi,\rrho),
$$
for some $\txi\in\FlatZHR$.
By Lemma~\ref{lm:flat_Zinv_func} $\txi=\xi\circ P$ for some $\xi\in\FlatOR$.
Therefore $\theta= 2\pi + \xi$ is a $\Cinf$ period function for the shift map $\ShA$.

\medskip

Finally, suppose that condition (c) violates.
Then it follows from Lemma~\ref{lm:1jet_TCVF} that ion some local coordinates at $\orig$ the linear part of $\AFld$ is equal to the matrix $A=\left(\begin{smallmatrix}0 & e \\ 0 & 0 \end{smallmatrix}\right)$ for some $e\not=0$.
But the eigen values of $A$ are zero, whence by Remark~\ref{rem:eigen_val_vanish} $\lim\limits_{z\to\orig}\theta(z)=+\infty$.
Theorem~\ref{th:charact_period_shift_maps} is completed.

\section{Proof of Theorem~\ref{th:shmap_periodic}}\label{sect:th:shmap_periodic}
Suppose that the period function $\theta:\disk\to(0,+\infty)$ for $\AFld$ is smooth on all of $\disk$.
Then for each $\afunc\in\Ci{\disk}{\RRR}$ we have that 
\begin{equation}\label{equ:preimage_of _afunc}
\ShA^{-1}\circ\ShA(\afunc) = \{\afunc + n\theta\}_{n\in\ZZZ}.
\end{equation}

(1) We will show that $\imShA=\EApl$, and that the mapping $$\ShA:\Ci{\disk}{\RRR}\to\imShA=\EApl$$ is $\contW{\infty}{\infty}$-open.
This will imply that by~\cite{Maks:LocInv} that $\ShA$ is either a homeomorphism onto $\EApl$ or a $\ZZZ$-covering map.
But due to~\eqref{equ:preimage_of _afunc} $\ShA$ is not injective, whence it is a $\ZZZ$-covering map.

Moreover, by results of~\cite{Maks:LocInv}, for the proof of $\contW{\infty}{\infty}$-openness of $\ShA$ it suffices to construct a local inverse of $\ShA$ defined only on some (arbitrary small) $\Wr{\infty}$-neighbourhood $\Nbh$ of the identity map $\id_{\disk}$ in $\EApl$.

Also notice that by~\cite{Maks:ReparamShMap} the image $\imShA$ of the shift map $\ShA$ and the openness property of $\ShA$ are invariant with respect to the reparametrizations, that is multiplications of $\AFld$ by smooth everywhere non-zero functions.
Hence due to (d) of Theorem~\ref{th:charact_period_shift_maps} we can assume that $\AFld$ is given by the following formula:
\begin{equation}\label{equ:reduce_to_norm_form}
\AFld(x,y) = -(y+\dAy) \dd{x} + (x+\dAx) \dd{y},
\end{equation}
with $\dAx,\dAy\in\FlatOR$.

Then $\AFld$ lifts to the following vector field
$$
\BFld(\phi,\rrho) = (1+\dAp)\,\dd{\phi} + \dAr\,\dd{\rrho},
$$ 
with $\dAp,\dAr\in\FlatHR$.

We need the following statement which will be proved in \S\ref{sect:proof:pr:sect_ShA_all}.
Recall that we denoted by $\EAp{1}$ the subset of $\EApl$ consisting of maps $\hdif$ such that $\jo{\hdif}=\id$.

\begin{proposition}\label{pr:sect_ShA_all}
There exists a $\contW{\infty}{\infty}$-continuous map $$\sigma:\EAp{1} \to \Ci{\disk}{\RRR}$$ which is a section of $\ShA$, i.e.
$$
\hdif(z) = \AFlow(z,\sigma(\hdif)(z)), \qquad \hdif\in\EAp{1}, \ z\in\disk,
$$
satisfies $\sigma(\id_{\disk})=0$, and preserves smoothness.
\end{proposition}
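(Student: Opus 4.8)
The plan is to ``blow up'' the singularity at $\orig$ in polar coordinates and reduce to the non-singular shift-map theory of Lemma~\ref{lm:shift-maps-without-sing-R2}. Throughout I keep the normal form fixed in the proof of Theorem~\ref{th:shmap_periodic}: $\AFld=-(y+\dAy)\dd{x}+(x+\dAx)\dd{y}$ with $\dAx,\dAy\in\FlatOR$, lifting via $P$ to the $\ZZZ$-invariant non-singular field $\BFld=(1+\dAp)\dd{\phi}+\dAr\dd{\rrho}$ on $\Hman$, which I extend to a complete non-singular field on all of $\Hman$ (say $\dd{\phi}$ far from $P^{-1}(\disk)$) so that Lemma~\ref{lm:shift-maps-without-sing-R2} applies. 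Given $\hdif\in\EAp{1}$, since $\jo{\hdif}=\id$ the lifting construction of Proposition~\ref{pr:lift_Map_id} produces a $\ZZZ$-equivariant $\thdif=(\thp,\thr)$ fixed on $\dHman$ with $P\circ\thdif=\hdif\circ P$. I would first note that $\thdif$ preserves every orbit of $\BFld$: each orbit of $\AFld$ (a simple closed curve around $\orig$, or $\orig$, or $\partial\disk$) has connected $P$-preimage, which is a single orbit of $\BFld$, and $\hdif$ fixes it; hence $\thdif\in\EBZpl$.

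\textbf{Shift function upstairs and descent away from $\orig$.} By Lemma~\ref{lm:shift-maps-without-sing-R2} $\thdif$ has a unique $\BFld$-shift function $\tsigma=\ShB^{-1}(\thdif)\in\Ci{\Hman}{\RRR}$, and $\thdif\mapsto\tsigma$ is $\contW{\infty}{\infty}$-continuous and preserves smoothness; since $P\circ\eta=P$ makes $\eta$ preserve $\BFld$, uniqueness forces $\tsigma$ to be $\ZZZ$-invariant. By Lemma~\ref{lm:sect_ShB}, $\thp=\phi+\tsigma+\xi_{\phi}$ and $\thr=\rrho+\xi_{\rrho}$ with $\xi_{\phi},\xi_{\rrho}\in\FlatZHR$, so $\tsigma$ vanishes on $\dHman$. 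Being $\ZZZ$-invariant, $\tsigma$ descends over the covering $P\colon\IHman\to\disko$ to a function $\sigma(\hdif)$ which is $\Cinf$ on $\disko$; from $\hdif\circ P=\AFlow(P(\cdot),\tsigma(\cdot))$ it is a shift function for $\hdif$ there, and vanishing of $\tsigma$ on $\dHman$ lets me extend it continuously to $\disk$ by $\sigma(\hdif)(\orig)=0$, so that $\hdif=\ShA(\sigma(\hdif))$ on all of $\disk$. Also $\sigma(\id_{\disk})=0$, since the lift of $\id_{\disk}$ fixed on $\dHman$ is $\id_{\Hman}$, whose shift function is $0$.

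\textbf{The main obstacle: smoothness at $\orig$.} The hard part will be showing $\sigma(\hdif)$ is $\Cinf$ at $\orig$; this is precisely where genuine smoothness of $\hdif$ on all of $\disk$ must enter, since a merely $\Cinf$-on-$\disko$ orbit-preserving map need not have a $\Cinf$ shift function at $\orig$. Here is the argument I would give. The coordinate formulas give $\hdif\circ P=\thr\,e^{i\thp}=P\cdot(1+\tmu)\,e^{i\tsigma}$, where $1+\tmu=(1+\xi_{\rrho}/\rrho)e^{i\xi_{\phi}}$ is $\ZZZ$-invariant and flat on $\dHman$; applying Lemma~\ref{lm:flat_Zinv_func} to its real and imaginary parts, $\tmu=\mu\circ P$ with $\mu\in\FlatOR$, $\mu(\orig)=0$, so $e^{i\sigma(\hdif)(z)}=\hdif(z)/\bigl(z(1+\mu(z))\bigr)$ on $\disko$. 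Moreover $|\hdif\circ P|=\thr=\rrho+\xi_{\rrho}$ shows $|\hdif(z)|^{2}-|z|^{2}$ is $\ZZZ$-invariant and flat on $\dHman$, hence (again by Lemma~\ref{lm:flat_Zinv_func}) extends flatly over $\orig$; therefore the Taylor series at $\orig$ of $\hdif\cdot\overline{\hdif}$ equals $z\bar z$. Writing this as $\hat\hdif\,\hat\hdif^{*}=z\bar z$ in the unique factorization ring $\CCC[[z,\bar z]]$, where $*$ is the formal conjugation and $\hat\hdif=z+\ldots$ begins with $z$ because $\jo{\hdif}=\id$, unique factorization forces $z\mid\hat\hdif$ (if instead $z\mid\hat\hdif^{*}$ then $\bar z\mid\hat\hdif$, contradicting the presence of the $z$-term). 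Realizing $\hat\hdif/z$ by a $\Cinf$ germ $w$ via Borel's theorem, $\hdif(z)-z\,w(z)$ is flat at $\orig$, so $\hdif(z)/z=w(z)+(\hdif(z)-z\,w(z))\bar z/|z|^{2}$ extends $\Cinf$ to $\orig$ with nonzero value there; hence $e^{i\sigma(\hdif)}=\hdif(z)/\bigl(z(1+\mu(z))\bigr)$ is $\Cinf$ near $\orig$, of unit modulus, and equal to $1$ at $\orig$. Taking the branch of $\tfrac{1}{i}\log$ near $1$ then yields $\sigma(\hdif)\in\Ci{\disk}{\RRR}$, and this branch coincides with the descent of $\tsigma$ on $\disko$ since both tend to $0$ at $\orig$.

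\textbf{Continuity and preservation of smoothness.} Finally I would assemble the $\contW{\infty}{\infty}$-continuity and smoothness-preservation of $\sigma$. Over compact subsets of $\disko$ these follow from the corresponding properties of $\hdif\mapsto\thdif$ (Proposition~\ref{pr:lift_Map_id}) and $\thdif\mapsto\tsigma$ (Lemma~\ref{lm:shift-maps-without-sing-R2}) composed with a local inverse of $P$; over a neighbourhood of $\orig$ they follow from $e^{i\sigma(\hdif)}=\hdif/\bigl(z(1+\mu(\hdif))\bigr)$, the continuity of $\mu(\hdif)=\ff(\tmu)$ (Lemma~\ref{lm:flat_Zinv_func}), and a choice of the Borel extension $w(\hdif)$ whose jet at $\orig$ depends continuously on that of $\hdif$. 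Gluing the two estimates over $\disk$ completes the proof; apart from the origin analysis of the previous paragraph, everything is bookkeeping with the cited statements.
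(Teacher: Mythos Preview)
Your proof is correct and follows essentially the same route as the paper: lift to polar coordinates via $\ml$, invoke the non-singular shift map $\ShB^{-1}$, decompose $\tsigma$ using Lemma~\ref{lm:sect_ShB}, descend the flat piece via Lemma~\ref{lm:flat_Zinv_func}, and reduce the remaining smoothness at $\orig$ to the fact that $\hdif\bar\hdif - z\bar z$ is flat, hence $\hdif(z)/z$ extends to a $\Cinf$ germ. The only difference is in how that last division is justified: the paper packages it as a separate lemma (write $\hdif(z)=z\gamma(z)$, take $\Gamma=\arg\gamma$) proved by citing \cite[Lm.~31]{Maks:Shifts} for smoothness and Mostow--Shnider \cite{MostowShnider:TrAMS:1985} for continuity of $\hdif\mapsto\gamma$, whereas you give a self-contained argument via unique factorization in $\CCC[[z,\bar z]]$ plus a Borel realization; your route is more explicit for smoothness but for continuity you are tacitly using the same type of division estimate (for $(\hdif-zw)/z$ on flat functions) that the paper gets from Mostow--Shnider.
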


Assuming this statement is proved let us show that $\imShA=\EApl$.

We have that $\imShA\subset\EApl$.
Conversely, let $\hdif\in\EApl$.
Then by Corollary~\ref{cor:j1_of_orb_pres_dif} there exists $\omega\in\RRR$ such that $\hdif_1 = \AFlow_{-\omega}\circ\hdif\in\EAp{1}$.
Then it is easy to see that the function $\afunc = \omega + \sigma(\hdif_1)$ is a shift function for $\hdif$.
Hence $\imShA=\EApl$.

Now by Proposition~\ref{pr:sect_ShA_j1} there exists a $\Wr{1}$ neighbourhood $\Nbh$ of $\hdif$ in $\EApl$ on which the composition 
$$\sigma \circ H:\Nbh\to \Ci{\disk}{\RRR},
\qquad
\sigma \circ H(\hdif) = \omega(\hdif) + \sigma(\AFlow_{-\omega(\hdif)}\circ\hdif)
$$
is a $\contW{\infty}{\infty}$-continuous and preserving smoothness local inverse of $\ShA$.

(3) Since $\Ci{\disk}{\RRR}$ is a contractible Frech\'et manifold and $\ShA$ is a $\ZZZ$-covering map, it follows that $\EApl$ is homotopy equivalent to $S^1$.

(4) Let us show that $\EAd$ is contractible.
Denote by $\FuncVanD$ the subset of $\Ci{\disk}{\RRR}$ consisting of functions vanishing on $\partial\disk$.
\begin{claim}
$\ShA$ yields a $\contW{\infty}{\infty}$-homeomorphism of $\FuncVanD$ onto $\EAd$, whence $\EAd$ is contractible.
\end{claim}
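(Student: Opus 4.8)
The plan is to exhibit $\FuncVanD$ as a single sheet of the infinite cyclic covering $\ShA$ sitting over $\EAd$. First I would check the two inclusions. If $\afunc\in\FuncVanD$, i.e. $\afunc|_{\partial\disk}=0$, then $\ShA(\afunc)(x)=\AFlow(x,0)=x$ for $x\in\partial\disk$, so $\ShA(\FuncVanD)\subset\EAd$. Conversely, by part~(1) we already know $\imShA=\EApl\supset\EAd$, so every $\hdif\in\EAd$ admits some shift function $\afunc_0\in\Ci{\disk}{\RRR}$. Since $\partial\disk$ is a single closed orbit, the identity $\AFlow(x,\afunc_0(x))=x$ for $x\in\partial\disk$ forces $\afunc_0(x)/\theta(x)$ to be an integer for each such $x$; being a continuous integer-valued function on the connected set $\partial\disk$ it equals a constant $n$. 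Recalling that $\theta\in\kerA$ (periodic case, $\kerA=\{m\theta\}_{m\in\ZZZ}$), the function $\afunc:=\afunc_0-n\theta$ is again a shift function for $\hdif$ and $\afunc|_{\partial\disk}=0$, so $\afunc\in\FuncVanD$ and $\ShA(\afunc)=\hdif$. Hence $\ShA(\FuncVanD)=\EAd$.

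Next I would pin down the whole preimage $\ShA^{-1}(\EAd)$. Let $\rho:\Ci{\disk}{\RRR}\to\Ci{\partial\disk}{\RRR}$ be the (continuous) restriction map $\afunc\mapsto\afunc|_{\partial\disk}$, and put $\Lambda=\{\,n\,\theta|_{\partial\disk}:n\in\ZZZ\,\}$. The argument of the previous paragraph shows exactly that $\ShA^{-1}(\EAd)=\rho^{-1}(\Lambda)$, with $\rho^{-1}(n\,\theta|_{\partial\disk})=\FuncVanD+n\theta$ and, in particular, $\rho^{-1}(0)=\FuncVanD$. Because $\theta$ is strictly positive on the compact set $\partial\disk$, any two distinct members of $\Lambda$ are at $\Wr{0}$-distance at least $\min_{\partial\disk}\theta>0$; thus $\Lambda$ is discrete and closed in $\Ci{\partial\disk}{\RRR}$, and consequently the translates $\FuncVanD+n\theta$, $n\in\ZZZ$, are pairwise disjoint and clopen in $\ShA^{-1}(\EAd)$, one of them being $\FuncVanD$ itself.

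I would then finish by a standard covering-space argument. By part~(1), $\ShA:\Ci{\disk}{\RRR}\to\EApl$ is an infinite cyclic covering for the $\Wr{\infty}$-topologies; restricting it over $\EAd$ yields a covering $\ShA:\ShA^{-1}(\EAd)\to\EAd$, and restricting further to the clopen piece $\FuncVanD$ yields a covering $\ShA|_{\FuncVanD}:\FuncVanD\to\EAd$, which is surjective by the first paragraph. It is injective: if $\ShA(\afunc_1)=\ShA(\afunc_2)$ with $\afunc_i\in\FuncVanD$, then $\afunc_1-\afunc_2=n\theta$ for some $n\in\ZZZ$, and restricting to $\partial\disk$ gives $n\,\theta|_{\partial\disk}=0$, so $n=0$. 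A bijective covering map is a homeomorphism, hence $\ShA|_{\FuncVanD}:\FuncVanD\to\EAd$ is a $\contW{\infty}{\infty}$-homeomorphism. Finally $\FuncVanD$ is a closed linear subspace of the Fréchet space $\Ci{\disk}{\RRR}$, hence convex and contractible via the straight-line homotopy $(\afunc,t)\mapsto(1-t)\afunc$ (which keeps $\FuncVanD$ invariant); conjugating this homotopy by $\ShA|_{\FuncVanD}$ shows $\EAd$ is contractible.

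The argument is essentially routine; the only step that really needs care is the clopen decomposition of $\ShA^{-1}(\EAd)$, i.e. the fact that the translates $\FuncVanD+n\theta$ do not accumulate — this is precisely where strict positivity of the period function $\theta$ on $\partial\disk$ (hence the \PTC\ hypothesis) is used. If one wished to avoid covering theory altogether, an alternative would be to verify $\contW{\infty}{\infty}$-continuity of the inverse directly by post-composing the local sections of $\ShA$ from Proposition~\ref{pr:sect_ShA_all} and Proposition~\ref{pr:sect_ShA_j1} with the above, but the covering-space route is the shortest.
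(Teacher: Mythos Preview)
Your argument is correct and follows essentially the same route as the paper: show $\ShA(\FuncVanD)\subset\EAd$, show that every $\hdif\in\EAd$ has a unique shift function in $\FuncVanD$ (by subtracting the appropriate integer multiple of $\theta$, determined on $\partial\disk$), and then pass from ``bijective restriction of a local homeomorphism'' to ``homeomorphism''. The paper compresses the last step into a single sentence (``since $\ShA$ is a local homeomorphism \ldots''), whereas you spell out the sheet separation $\ShA^{-1}(\EAd)=\bigsqcup_{n}(\FuncVanD+n\theta)$ explicitly and use positivity of $\theta$ on $\partial\disk$ to get the clopen decomposition; this is exactly the content hidden in the paper's sentence, so your version is a more careful rendering of the same proof rather than a different one.
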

\begin{proof}
Evidently, $\ShA(\FuncVanD)\subset \EAd$.
Conversely, let $\hdif\in\EAd$ and $\afunc$ be any shift function of $\hdif$, so $\hdif(z)=\AFlow(z,\afunc(z))$.
Then $\ShA^{-1}(\hdif)=\{\afunc+ n\theta\}_{n\in\ZZZ}$, where $\theta$ is the period function of $\ShA$.
Recall that $\theta(z)=\Per(z)$ for all $z\not=0$.
Since $\hdif$ is fixed on $\partial\disk$, it follows that $\afunc|_{\partial\disk}=n\theta|_{\partial\disk}$ for a unique $n\in\ZZZ$.
Hence $\afunc'=\afunc-n\theta$ is a unique shift function of $\hdif$ which vanishes on $\partial\disk$, i.e. belongs to $\FuncVanD$.

This implies that $\ShA$ yields a bijection of $\FuncVanD$ onto $\EAd$.
Since in addition $\ShA$ is a local homeomorphism, it follows that it homeomorphically maps $\FuncVanD$ onto $\EAd$.
\end{proof}

(2) Consider the following subset of $\Ci{\disk}{\RRR}$:
$$\Gamma= \{ \afunc\in\Ci{\disk}{\RRR} \ : \ \AFld(\afunc)>-1 \}.$$ 
Then by~\cite{Maks:Shifts} $\Gamma=\ShA^{-1}(\DApl)$, and the restriction $\ShA|_{\Gamma}:\Gamma\to\DApl$ is a $\ZZZ$-covering as well.
Evidently $\Gamma$ is convex, and therefore contractible.
Hence $\DApl$ is homotopy equivalent to $S^1$.

Moreover, $\ShA$ homeomorphically maps the following convex set $\Gamma\cap\FuncVanD$ onto $\EAd$.
Whence $\EAd$ is contractible as well.

This completes Theorem~\ref{th:shmap_periodic} modulo Proposition~\ref{pr:sect_ShA_all}.

\section{Proof of Proposition~\ref{pr:sect_ShA_all}}\label{sect:proof:pr:sect_ShA_all}
Let $\hdif\in\EAp{1}$.
First we will explain how to construct $\sigma$ and then show its continuity.

Notice that $\EAp{1} \subset \MapR$.
Then by Proposition~\ref{pr:lift_Map_id} there exists a lifting $\thdif=\ml(\hdif)\in\MapZH$.
Let $\thdif=(\thp,\thr)$ be the coordinate functions of $\thdif$.

\begin{claim}
$\ml(\EApl) \subset \EBZpl$.
In particular, $\thdif\in \EBZpl$.
\end{claim}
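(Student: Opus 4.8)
The plan is to deduce everything from the lifting identity $P\circ\thdif=\hdif\circ P$ together with the description of the orbits of $\BFld$ as the $P$-preimages of the orbits of $\AFld$, which was already obtained in the proof of the implication (c)$\Rightarrow$(a) of Theorem~\ref{th:charact_period_shift_maps}.

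First I would recall that every orbit $\orb$ of $\AFld$ other than $\{\orig\}$ is a simple closed curve wrapping exactly once around $\orig$, cf.~\eqref{equ:h_rounding_orbits}. Since $P$ restricts to the universal covering $\IHman\to\RRR^2\setminus\orig$ with group of deck transformations generated by $\eta$, the set $\torb:=P^{-1}(\orb)\subset\IHman$ is connected, $P$ maps it onto $\orb$, and it is invariant under the flow of $\BFld$ because $TP\circ\BFld=\AFld\circ P$ and $\orb$ is $\AFlow$-invariant. As $\BFld$ has no zeros, $\torb$ is diffeomorphic to $\RRR^1$ and is a single orbit of $\BFld$. Conversely, any orbit $\widetilde\omega\subset\IHman$ of $\BFld$ projects by $P$ onto some orbit $\orb$ of $\AFld$, hence $\widetilde\omega\subset P^{-1}(\orb)=\torb$, and maximality of orbits gives $\widetilde\omega=\torb$. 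Thus, together with the boundary orbit $\dHman=\{\rrho=0\}$, the orbits of $\BFld$ are exactly $\dHman$ and the sets $P^{-1}(\orb)$ for $\orb$ an orbit of $\AFld$.

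Next I would argue as follows. Let $\thdif=\ml(\hdif)$; the only property of $\hdif$ that will be used is that $\hdif(\orb)=\orb$ for every orbit $\orb$ of $\AFld$, which holds since $\hdif\in\EApl$. For an orbit $\torb=P^{-1}(\orb)$ of $\BFld$ the identity $P\circ\thdif=\hdif\circ P$ gives
$$
P\bigl(\thdif(\torb)\bigr)=\hdif\bigl(P(\torb)\bigr)=\hdif(\orb)=\orb ,
$$
whence $\thdif(\torb)\subset P^{-1}(\orb)=\torb$. On $\dHman$ the map $\thdif$ is the identity by the definition of $\MapZH$, so it preserves $\dHman$ as well. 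Therefore $\thdif$ carries every orbit of $\BFld$ into itself. Since $\BFld$ has no singular points, Lemma~\ref{lm:shift-maps-without-sing-R2} shows that such a smooth self-map of $\Hman$ lies in $\EBpl$ (it admits a smooth shift function with respect to $\BFld$); being moreover $\ZZZ$-equivariant by the very definition of $\MapZH$ and orientation preserving as a lift of the orientation preserving $\hdif$, it belongs to $\EBZpl$. In particular $\thdif\in\EBZpl$.

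I expect the only point requiring genuine care to be the identification of the orbits of $\BFld$ with the sets $P^{-1}(\orb)$, which rests on the regular orbits of $\AFld$ wrapping exactly once around $\orig$ — a feature of the topological-center structure already exploited earlier in the paper. Once this is granted, the claim follows formally from $P\circ\thdif=\hdif\circ P$ and the hypothesis $\hdif\in\EApl$.
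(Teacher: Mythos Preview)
Your argument is correct and follows exactly the paper's route: identify each orbit of $\BFld$ in $\IHman$ with a full preimage $P^{-1}(\orb)$ of an orbit of $\AFld$, then use $P\circ\thdif=\hdif\circ P$ together with $\hdif(\orb)=\orb$ to obtain $\thdif(\torb)\subset P^{-1}(\orb)=\torb$. The one superfluous step is the appeal to Lemma~\ref{lm:shift-maps-without-sing-R2}: once orbit-preservation is shown, membership in $\EBZpl$ is immediate from the definitions (orbit-preservation plus the $\ZZZ$-equivariance already built into $\thdif\in\MapZH$), and that lemma is not needed here.
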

\begin{proof}
Let $\torb$ be the orbit of $\BFld$.
We have to show that $\thdif(\torb)\subset\torb$.

Notice that $\orb=P(\torb)$ is the orbit of $\AFld$, and $\torb=P^{-1}(\orb)$, that is the inverse image of $\orb$ is connected.
Moreover, $\hdif(\orb)=\orb$, whence 
$$ P \circ \thdif(\torb) = \hdif\circ P(\torb) = \hdif(\orb)=\orb.$$
Therefore $\thdif(\torb) \subset P^{-1}(\orb) =\torb.$
\end{proof}

Now by Lemma~\ref{lm:shift-maps-without-sing-R2}  there exists a unique $\Cinf$ shift function $\tsigma$ of $\thdif$ with respect to $\BFld$.
Moreover, by Lemma~\ref{lm:sect_ShB} $\tsigma$ is $\ZZZ$-invariant.
We have to show that $\tsigma=\sigma\circ P$ for some $\Cinf$ function $\sigma:\disk\to\RRR$.
Then $\sigma$ will be a shift function for $\hdif$.

Again by Lemma~\ref{lm:sect_ShB} 
$$
\tsigma = \thp(\phi,\rrho)-\phi + \widetilde{\xi}(\phi,\rrho),
$$
where $\widetilde{\xi}\in\FlatZHR$, i.e. it is $\ZZZ$-invariant and flat on $\dHman$.
Then by Lemma~\ref{lm:flat_Zinv_func} there exists a $\Cinf$ function $\xi\in\FlatOR$ such that $\widetilde{\xi}=\xi\circ P$.
Therefore we have to show that there exists a $\Cinf$ function $\nu\in\Ci{\disk}{\RRR}$ such that 
\begin{equation}\label{equ:Delta_Phi}
\thp(\phi,\rrho)-\phi=\gamma\circ P(\phi,\rrho).
\end{equation}

\begin{lemma}\label{lm:h_zg}
Regard $\hdif$ as a smooth function $\hdif:\disk\to\CCC$.
Then there exists a smooth function $\gamma:\disk\to\CCC$ satisfying~\eqref{equ:Delta_Phi} and such that $\hdif(z)=z\gamma(z)$ and $\gamma(0)=1$.

Moreover, the correspondence $\hdif\mapsto\gamma$ is a $\contS{\infty}{\infty}$-continuous and preserving smoothness map $\EApl\to\Ci{\disk}{\CCC}$.
\end{lemma}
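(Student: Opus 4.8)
The plan is to pass to polar coordinates, read off from orbit preservation that $\hdif$ preserves the circles about $\orig$ up to a flat error, and then deduce from that, by a Taylor‑series argument, that $\hdif(z)/z$ extends smoothly across $\orig$. I would use the lift already produced in the proof of Proposition~\ref{pr:sect_ShA_all}: $\thdif=\ml(\hdif)=(\thp,\thr)\in\EBZpl$ (Proposition~\ref{pr:lift_Map_id}), so that for $z=\rrho e^{i\phi}\ne\orig$ one has $\hdif(z)=\thr\,e^{i\thp}$, whence $|\hdif(z)|=\thr$ and $\arg\hdif(z)-\arg z=\thp-\phi$. By Lemma~\ref{lm:sect_ShB}, $\thr=\rrho+\xi_{\rrho}$ and $\thp-\phi=\tsigma+\xi_{\phi}$ with $\xi_{\rrho},\xi_{\phi}\in\FlatZHR$; in particular $\thp-\phi$ is $\Cinf$, $\ZZZ$‑invariant and vanishes on $\dHman$, hence descends through $P$ to a $\Cinf$ function on $\disko$, and $2\rrho\xi_{\rrho}+\xi_{\rrho}^{2}\in\FlatZHR$. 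Applying Lemma~\ref{lm:flat_Zinv_func} to the latter gives $\nu_{0}\in\FlatOR$ with $|\hdif(z)|^{2}=|z|^{2}+\nu_{0}(z)$ on $\disk$; thus $|\hdif(z)|^{2}-|z|^{2}$ is flat at $\orig$.

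The heart of the matter is the following. Put $\rho(z)=\hdif(z)-z$; since $\hdif\in\EAp{1}$, $\jo{\hdif}=\id$, so $\rho$ vanishes to order $\ge 2$ at $\orig$. The identity above reads $2\,\mathrm{Re}(\bar z\,\rho)+|\rho|^{2}=\nu_{0}$, and $\nu_{0}$ being flat, comparison of Taylor series at $\orig$ gives $2\,\mathrm{Re}(\bar z\,\rho)\equiv -|\rho|^{2}$ as formal power series. Expanding the Taylor series of $\rho$ as $\sum_{n\ge 2}p_{n}$ with $p_{n}$ homogeneous in $z,\bar z$ and comparing the coefficients of $\bar z^{\,n+1}$ for increasing $n$, a short induction shows that the pure antiholomorphic coefficient (that of $\bar z^{\,n}$) in $p_{n}$ vanishes for every $n\ge 2$: the degree‑$3$ relation forces the $\bar z^{2}$‑term of $p_{2}$ to vanish, and thereafter the right‑hand side carries no $\bar z^{\,n+1}$‑term because each lower $p_{i}$ is already free of pure antiholomorphic monomials. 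Consequently each $p_{n}/z$ is again a homogeneous polynomial, $1+\sum_{n\ge 2}p_{n}/z$ is a formal power series with constant term $1$, and by Borel's theorem it is the Taylor series of a $\Cinf$ germ $\gamma_{0}$ at $\orig$; thus $\hdif(z)-z\gamma_{0}(z)$ is flat at $\orig$. Since a $\Cinf$ function flat at $\orig$ remains, after division by $z$, $\Cinf$ and flat, the function $\gamma(z):=\hdif(z)/z$ ($z\ne\orig$), extended by $\gamma(\orig)=1$, is $\Cinf$ on $\disk$, nowhere zero because $\hdif$ is a diffeomorphism, and $\hdif(z)=z\gamma(z)$ everywhere. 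Finally, $\disk$ being simply connected and $\gamma$ non‑vanishing, $\gamma$ admits a $\Cinf$ logarithm vanishing at $\orig$; letting $\nu=\mathrm{Im}\log\gamma$, which is $\Cinf$ on $\disk$, the equalities $e^{i\,\nu\circ P}=(\gamma\circ P)/|\gamma\circ P|=e^{\,i(\thp-\phi)}$ together with vanishing of both angular functions on $\dHman$ yield $\nu\circ P=\thp-\phi$, i.e.\ \eqref{equ:Delta_Phi}.

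It remains to see that $\hdif\mapsto\gamma$ is $\contS{\infty}{\infty}$‑continuous and preserves smoothness. Over compact subsets of $\disko$ this is clear, $\gamma=\hdif(z)/z$ being division by the fixed non‑vanishing function $z$. Near $\orig$ one reruns the construction with parameters: for a $\Cinf$ family $\hdif(\cdot,t)$ in $\EAp{1}$ ($t\in\RRR^{k}$), the function $|\hdif(\cdot,t)|^{2}-|z|^{2}$ is $\Cinf$ in $(z,t)$ and flat in $z$ locally uniformly in $t$ (by Lemmas~\ref{lm:sect_ShB} and~\ref{lm:flat_Zinv_func} and the parametrized estimates for $\ml$ in Proposition~\ref{pr:lift_Map_id}), so $|\gamma(\cdot,t)|=\bigl(1+(|\hdif|^{2}-|z|^{2})/|z|^{2}\bigr)^{1/2}$ is $\Cinf$ in $(z,t)$; replacing $\hdif$ by $\hdif/|\gamma|$ reduces to maps preserving every circle $|z|=c$ exactly, for which $\nu$ is recovered from its Wirtinger derivatives $\partial_{\bar z}\nu=-i\,\partial_{\bar z}\hdif/\hdif$ and $\partial_{z}\nu=-i\,(z\,\partial_{z}\hdif-\hdif)/(z\hdif)$ — both $\Cinf$ in $(z,t)$ by the antiholomorphic vanishing already proved — so that $\nu$, and hence $\gamma=|\gamma|\,e^{i\nu}$, is $\Cinf$ in $(z,t)$ and depends continuously on $\hdif$. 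I expect this last bookkeeping to be the most tedious part; the genuine obstacle, and the only place where orbit preservation really enters, is the vanishing of the antiholomorphic Taylor coefficients of $\hdif-\id$ in the previous paragraph.
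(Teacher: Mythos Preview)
Your second paragraph is correct and gives a genuinely different route to the smoothness of $\gamma=h/z$ than the paper takes. The paper establishes the same flatness $h\bar h=z\bar z+\zeta$ with $\zeta\in\FlatOR$ and then simply appeals to \cite[Lm.~31]{Maks:Shifts} for the conclusion that $h/z$ extends smoothly; your Taylor--series argument --- showing by induction on the degree that the pure antiholomorphic coefficient of each homogeneous piece of $\rho=h-z$ vanishes, then using Borel to produce a smooth $\gamma_0$ with the correct jet and reducing to the elementary fact that (flat)$/z$ is smooth --- is a self-contained replacement for that citation, and the induction is sound. Your verification of~\eqref{equ:Delta_Phi} via $\nu=\mathrm{Im}\log\gamma$ is essentially identical to the paper's (where the argument of $\gamma$ is called $\Gamma$).

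The gap is in your third paragraph, and it is not just bookkeeping. The Borel step in paragraph~2 cannot be recycled for $\contS{\infty}{\infty}$-continuity: Borel extension is not a continuous operation on jets, so it yields smoothness of $\gamma$ for each fixed $h$ but says nothing about how $\gamma$ varies with $h$. Your alternative via Wirtinger derivatives is circular near $\orig$: the formula $\partial_{\bar z}\nu=-i\,\partial_{\bar z}h/h$ has $h(0)=0$ in the denominator, and rewriting it as $(\partial_{\bar z}\rho/z)\cdot(h/z)^{-1}$ presupposes that $\partial_{\bar z}\rho/z$ extends smoothly (continuously in $h$, or jointly in $(z,t)$) --- but the ``antiholomorphic vanishing already proved'' is a statement about the formal Taylor series only, and turning it into actual smooth divisibility is exactly the problem you are trying to solve. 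The same objection applies to $(z\partial_z h-h)/(zh)$. For preservation of smoothness alone your paragraph~2 can be rescued by invoking Borel \emph{with parameters}, but that still leaves continuity unaddressed. The paper closes this gap by invoking a division theorem of Mostow--Shnider (stated as Lemma~\ref{lm:division_by_z}): the multiplication map $Z\colon\gamma\mapsto z\gamma$ has $\contS{\infty}{\infty}$-continuous inverse on its image, so once paragraph~2 has shown that every $h\in\EAp{1}$ lies in $\im Z$, both continuity and preservation of smoothness of $h\mapsto\gamma=Z^{-1}(h)$ follow at once.
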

\begin{proof}
Notice that $\gamma(z)=\hdif(z)/z$ is a smooth map $\gamma:\disko\to\CCC\setminus\{\orig\}$.
We have to prove that it smoothly extends to a map $\disk\to\CCC$.

We will only show that
\begin{equation}\label{equ:hh_zz_mu}
\hdif\bar\hdif=z \bar{z} + \zeta(z),
\end{equation}
where $\zeta:\disk\to\CCC$ is a smooth map being flat at $\orig$.
Then smoothness of $\gamma$ at $\orig$ will follow by the arguments similar to the proof of~\cite[Lm.~31]{Maks:Shifts}.

Let $z=re^{i\phi}=P(\phi,\rrho)$ and $\thdif=(\thp,\thr)=\ml(\hdif)$.
Then 
$$
\hdif\bar\hdif(z)   =
\thr^2(\phi,\rrho)  \stackrel{\eqref{equ:coord_func_shift_R}}{=\!=\!=\!=}
(r + \xi_{\rrho}(\phi,\rrho))^2 = 
r^2 + \tzeta(\phi,\rrho)=
z \bar{z} + \tzeta(\phi,\rrho),
$$
for some $\tzeta\in\FlatZHR$.
By Lemma~\ref{lm:flat_Zinv_func} $\tzeta=\zeta\circ P$ for some $\zeta\in\FlatOR$, whence 
$\hdif\bar\hdif(z)=z \bar{z} + \zeta(z)$.
This establishes~\eqref{equ:hh_zz_mu} and smoothness of $\gamma$.
Continuity of the division $\hdif\mapsto\gamma$ is implied by compactness of $\disk$ and the following lemma being a very particular case of results of~\cite{MostowShnider:TrAMS:1985}:
\begin{lemma}\label{lm:division_by_z}{\rm\cite{MostowShnider:TrAMS:1985}.}
Let $Z:\Ci{\CCC}{\CCC}\to \Ci{\CCC}{\CCC}$ be the ``multiplication by $z$'' map, i.e.
$$
Z(\gamma)(z) = z \, \gamma(z), \qquad \gamma\in \Ci{\CCC}{\CCC},\  z\in\CCC.
$$
Denote by $\im Z$ the image of $Z$ in $\Ci{\CCC}{\CCC}$.
Then $Z$ is injective and the inverse map $Z^{-1}:\im Z \to \Ci{\CCC}{\CCC}$ is $\contS{\infty}{\infty}$ continuous.
\end{lemma}

Let us prove~\eqref{equ:Delta_Phi}.
Since $\gamma\not=0$ on $\disk$ and $\disk$ is simply connected, there exists a unique smooth function $\Gamma:\disk\to\RRR$ such that $\Gamma(\orig)=0$, and 
$\gamma(z) = |\gamma(z)| e^{i\Gamma(z)}$.

Let $z=re^{i\phi}\in\disk$.
Then $\hdif\circ P(\phi,\rrho) = P \circ \thdif(\phi,\rrho)$ implies that 
$$
\thr e^{i \thp(\phi,\rrho)} = \hdif(\rrho e^{i\phi})=
\gamma(\rrho e^{i\phi})\, \rrho e^{i\phi}=
|\gamma(z)| \rrho e^{i(\Gamma(\rrho e^{i\phi})+ \phi)}.
$$
Hence $$\thp(\phi,\rrho)-\phi=\Gamma(r e^{i\phi})+2\pi n$$ for some $n\in\ZZZ$.
In order to find $n$ put $\rrho=0$, then
$$2\pi n = \thp(\phi,0)-\phi - \Gamma(\orig) = \phi-\phi+0=0,$$
whence $n=0$.
\end{proof}

Hence $\sigma=\gamma+\xi$ is a $\Cinf$ shift function for $\hdif$.

It remains to note that the correspondences $\hdif\mapsto\gamma$ and $\hdif\mapsto\xi$ are $\contW{\infty}{\infty}$-continuous and preserving smoothness by Lemmas~\ref{lm:flat_Zinv_func} and~\ref{lm:h_zg}.
Theorem~\ref{th:shmap_periodic} is completed.

\section{Proof of Theorem~\ref{th:func_flat_pert}}\label{sect:proof:th:func_flat_pert}
We have to find a $\Cinf$ function $\gfunc:\RRR\to\RRR$ such that $$j^{\infty}\func(x,y)=j^{\infty}\gfunc(x^2+y^2),$$
where $j^{\infty}$ means \myemph{$\infty$-jet at $\orig$}.

Recall that a vector field $\AFld$ is  \myemph{parameter rigid} if for any other vector field $\AFld'$ such that every orbit $\orb'$ of $\AFld'$ is included in some orbit of $\AFld$, there exists a $\Cinf$ function $\nu:\disk\to\RRR$ such that $\AFld'=\nu\AFld$, see~\cite{Santos:ETDS:2007, Maks:jets}.

\begin{claim}\label{clm:F_is_param_rig}
 $\AFld$ is parameter rigid.
\end{claim}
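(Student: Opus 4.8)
The plan is to reduce $\AFld$ to Takens' flat normal form, lift all the data to polar coordinates through $P$, read off the $\infty$-jet of the second field explicitly, and then control the genuinely flat remainder by Lemma~\ref{lm:flat_Zinv_func}.

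First observe that parameter rigidity is invariant under germs of diffeomorphisms $(\disk,\orig)\to(\disk,\orig)$ and under reparametrizations $\AFld\mapsto\beta\AFld$ with $\beta$ a nowhere-zero $\Cinf$ function: both operations send a vector field $\AFld'$ whose orbits lie in orbits of $\AFld$ to one with the same property, and they only multiply the corresponding collinearity factor by a nowhere-zero smooth function. Hence by (c) of Theorem~\ref{th:charact_period_shift_maps} I may assume that near $\orig$
$$
\AFld(x,y) = -(y+\dAy)\dd{x} + (x+\dAx)\dd{y}, \qquad \dAx,\dAy\in\FlatOR .
$$
Now let $\AFld'$ be a vector field on $\disk$ each of whose orbits is contained in an orbit of $\AFld$. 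Since the orbit of $\AFld$ through $\orig$ is the single point $\{\orig\}$, the orbit of $\AFld'$ through $\orig$ is a point, i.e. $\AFld'(\orig)=0$. On $\disko$ the field $\AFld$ has no zeros and $\AFld'(z)$ is collinear with $\AFld(z)$, so $\AFld'=\nu\AFld$ for a unique $\nu\in\Ci{\disko}{\RRR}$; it suffices to prove that $\nu$ extends to a $\Cinf$ function near $\orig$, since then $\AFld'=\nu\AFld$ on all of $\disk$ by continuity.

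The key step is the passage to polar coordinates. By Lemma~\ref{lm:Takens_nf1_polar} the field $\AFld$ lifts to $\BFld=(1+\dAp)\dd{\phi}+\dAr\dd{\rrho}$ with $\dAp,\dAr\in\FlatZHR$; in particular $\BFld$ is $\ZZZ$-invariant and non-singular near $\dHman$. Since $\AFld'(\orig)=0$, Hadamard's lemma and formula~\eqref{equ:polar_expression} show that $\AFld'$ lifts to a $\Cinf$, $\ZZZ$-invariant field $\BFld'=\BFld'_{\phi}\dd{\phi}+\BFld'_{\rrho}\dd{\rrho}$ on $\Hman$ near $\dHman$, and this lift still has the property that its orbits lie in orbits of $\BFld$. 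Consequently $\BFld'=\widetilde{\nu}\,\BFld$ near $\dHman$, where $\widetilde{\nu}=\nu\circ P=\BFld'_{\phi}/(1+\dAp)$ is smooth on $\Hman$ near $\dHman$ and $\ZZZ$-invariant. In particular $\BFld'_{\rrho}=\widetilde{\nu}\,\dAr$ is flat on $\dHman$; comparing with $\BFld'_{\rrho}=\cos\phi\cdot\AFld'_{x}\circ P+\sin\phi\cdot\AFld'_{y}\circ P$ and expanding $\AFld'_{x},\AFld'_{y}$ into homogeneous Taylor components $P_m,Q_m$ of degree $m$, this flatness forces $xP_m(x,y)+yQ_m(x,y)\equiv0$ for every $m$. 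Since $x$ and $y$ are coprime in $\RRR[x,y]$, this gives $P_m=yR_m$ and $Q_m=-xR_m$ for homogeneous $R_m$ of degree $m-1$, so by Borel's theorem there is a $\Cinf$ function $r$ near $\orig$ with $j^{\infty}\AFld'=(yr)\dd{x}-(xr)\dd{y}$. In view of the normal form of $\AFld$ this means precisely that $\AFld'=-r\AFld+\zeta$ for some vector field $\zeta$ flat at $\orig$.

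To finish, note that on $\disko$ one has $\zeta=(\nu+r)\AFld$. The polar lift of the flat field $\zeta$ has both coordinate functions in $\FlatZHR$ (a function flat on $\dHman$ remains smooth and flat after division by $\rrho$), so from $\widetilde{\zeta}_{\phi}=\widetilde{(\nu+r)}\,(1+\dAp)$ I get $\widetilde{(\nu+r)}=\widetilde{\zeta}_{\phi}/(1+\dAp)\in\FlatZHR$. By Lemma~\ref{lm:flat_Zinv_func} there is $w\in\FlatOR$ with $\widetilde{(\nu+r)}=w\circ P$, i.e. $\nu+r=w$ on $\disko$ near $\orig$; hence $\nu=w-r$ is $\Cinf$ near $\orig$, which is what was needed. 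The one genuinely delicate point in this scheme is that $\widetilde{\nu}$ by itself need not descend to a smooth function on $\RRR^2$ — the finite part of the $\infty$-jet of $\AFld'$ is an obstruction — and the device that circumvents it is to strip off the entire formal power series part of $\AFld'$ via Borel's theorem and apply the descent Lemma~\ref{lm:flat_Zinv_func} only to the truly flat remainder $\zeta$; checking that flatness on $\dHman$ survives the polar lift and division by $\rrho$ is routine given \S\ref{sect:polar_coordinates}.
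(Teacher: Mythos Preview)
Your argument is correct and self-contained, but it follows a route quite different from the paper's. The paper proves the claim in one line by invoking the general criterion of~\cite{Maks:jets}: since $\imShA=\EApl$ (this is the content of Theorem~\ref{th:shmap_periodic}(1), already established in \S\ref{sect:th:shmap_periodic}--\S\ref{sect:proof:pr:sect_ShA_all}) and the local inverses of the shift map preserve smoothness, parameter rigidity of $\AFld$ follows from that external result. Thus the paper treats parameter rigidity as a corollary of the shift-map machinery developed earlier, rather than proving it from scratch.

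Your approach bypasses the shift map entirely. You work directly with the Takens normal form, lift both $\AFld$ and the test field $\AFld'$ to polar coordinates, and use the flatness of the $\dd{\rrho}$-component of $\BFld$ to read off the constraint $xP_m+yQ_m\equiv0$ on every homogeneous Taylor component of $\AFld'$. Solving this and applying Borel's theorem lets you split $\AFld'=-r\AFld+\zeta$ with $\zeta$ flat, after which the descent Lemma~\ref{lm:flat_Zinv_func} handles the flat remainder. This is more elementary in that it uses only \S\ref{sect:polar_coordinates}--\S\ref{sect:Takens_nf} and the Borel lemma, and it makes the obstruction to smoothness of $\nu$ at $\orig$ completely explicit. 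The paper's route, on the other hand, is much shorter given the shift-map results already in hand, and exhibits parameter rigidity as part of a general phenomenon rather than a computation specific to this normal form.
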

\begin{proof}
We have that $\imShA=\EApl$ and that local sections of $\ShB$ preserve smoothness.
These two assumptions imply parameter rigidity of $\AFld$ by results of~\cite{Maks:jets}.
\end{proof}

Choose local coordinates at $\orig$ in which $\AFld$ is given by~\eqref{equ:Takens_nf1_xy}.
Let also $\AFld'(x,y) = -\func'_{y} \dd{x} + \func'_{x} \dd{y}$ be the Hamiltonian vector field of $\func$.
Then by parameter rigidity of $\AFld$ there exists a $\Cinf$ function $\nu:\disk\to\RRR$ such that 
\begin{equation}\label{equ:param_rig_implication}
\func'_{x}(x,y)= (x+\dAx)\,\nu(x,y),
\qquad 
\func'_{y}(x,y)= (y+\dAy)\,\nu(x,y).
\end{equation}
Hence 
\begin{equation}
x \cdot j^{\infty}\func'_{y} = y \cdot  j^{\infty}\func'_{x}.
\end{equation}

\begin{lemma}\label{lm:xpy_ypx}
Let $p$ be a non-zero homogeneous polynomial of degree $n$ in two variables satisfying the following identity:
\begin{equation}\label{equ:xpy_ypx}
x p'_{y} = y p'_{x}.
\end{equation}
Then $n=2k$ is even and $p(x,y)=a(x^2+y^2)^k$ for some $a\in\RRR$.
\end{lemma}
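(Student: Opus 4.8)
The hypothesis $x p'_{y} = y p'_{x}$ says exactly that $p$ has vanishing Lie derivative along the rotation vector field $-y\,\dd{x} + x\,\dd{y}$ of Example~\ref{exmp:non-deg-hamvf}; equivalently, $p$ is constant along its orbits, hence invariant under all rotations of $\RRR^2$ about $\orig$. The plan is to combine this rotation-invariance with the homogeneity of $p$, which I would carry out in polar coordinates.

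First I would use the polar coordinate map $P(\phi,\rrho) = (\rrho\cos\phi,\rrho\sin\phi)$ of \S\ref{sect:polar_coordinates}. Homogeneity of degree $n$ gives $p\circ P(\phi,\rrho) = \rrho^{n}\,q(\phi)$, where $q(\phi) = p(\cos\phi,\sin\phi)$ is a smooth $2\pi$-periodic function, and by~\eqref{equ:polar_expression} the rotation field lifts to $\dd{\phi}$. Thus the hypothesis translates into $\rrho^{n}\,q'(\phi) \equiv 0$, so $q'\equiv 0$, $q$ is a constant $a$, and $p(x,y) = a\,(x^2+y^2)^{n/2}$ on $\disko$, hence on all of $\disk$ by continuity.

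It then remains to check that $n$ is even. Since $p\not\equiv 0$ we have $a\not=0$, and $a\,(x^2+y^2)^{n/2}$ is a polynomial only if $n$ is even: were $n = 2k+1$, restricting to the diagonal $\{y=x\}$ would give $p(x,x) = a\,2^{\,k+1/2}\,|x|^{2k+1}$, which is not a polynomial in $x$. Hence $n = 2k$ and $p = a\,(x^2+y^2)^{k}$, as claimed. One can also argue purely algebraically: Euler's identity $x p'_{x} + y p'_{y} = n p$ together with the hypothesis yields, after eliminating $p'_{x}$ and $p'_{y}$, the polynomial identities $(x^2+y^2)\,p'_{x} = n p\,x$ and $(x^2+y^2)\,p'_{y} = n p\,y$; since $x^2+y^2$ is irreducible in the UFD $\RRR[x,y]$ and divides neither the nonzero constant $n$ nor $x$, it divides $p$, and writing $p = (x^2+y^2)\,q$ (so $\deg q = n-2$) one checks that $q$ satisfies the same identity, so induction on the degree — the case $n=0$ being trivial and $n=1$ impossible — finishes the proof. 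The only mildly delicate point in either route is this parity step; everything else is routine.
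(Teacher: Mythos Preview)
Your argument is correct, and in fact you give two proofs. Your second, algebraic argument is essentially the paper's own proof: the paper derives $nxp = (x^2+y^2)p'_x$ from Euler's identity and the hypothesis, factors out $x^2+y^2$, checks the quotient satisfies the same relation, and inducts on the degree. You state this route a bit more carefully (naming the UFD property and handling $n=1$ explicitly), but it is the same idea.

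Your first argument, via polar coordinates, is a genuinely different route. Interpreting $xp'_y - yp'_x = 0$ as rotation-invariance of $p$ and combining it with homogeneity to get $p\circ P(\phi,\rrho)=a\rrho^n$ is clean and conceptual; it replaces the inductive divisibility step by a one-line ODE $q'(\phi)=0$. The cost is that parity of $n$ is no longer automatic, and you must argue separately that $a(x^2+y^2)^{n/2}$ is a polynomial only when $n$ is even; your diagonal restriction handles this correctly. By contrast, in the paper's inductive approach the parity falls out for free, since each step strips off a factor $x^2+y^2$ and drops the degree by $2$. Either argument is perfectly adequate here.
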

\begin{proof}
Since $p$ is homogeneous of degree $n$, it satisfies the following Euler identity:
$np = xp'_{x} + y p'_{y}.$
Hence 
$$
nxp = x^2 p'_{x} + xy p'_{y} =x^2 p'_{x} + y^2 p'_{x} =  (x^2+y^2)\,p'_{x}.
$$
Therefore either $n=0$ and $p$ is a constant, or $p(x,y) =(x^2+y^2) q(x,y)$ for some homogeneous polynomial of degree $n-1$.
We claim that $q$ satisfies $x q'_{y} = y q'_{x}$.
Indeed,
$$
p'_{y} = 2yq+(x^2+y^2)q'_{y},
\qquad
p'_{x} = 2xq+(x^2+y^2)q'_{x}.
$$
Then we get from~\eqref{equ:xpy_ypx} that $(x^2+y^2)xq'_{y}=(x^2+y^2)y q'_{x}$, and therefore $x q'_{y} = y q'_{x}$.

By the same arguments, $q$ is divided by $x^2+y^2$ as well.
Now lemma follows by induction on the degree $n$.
\end{proof}

Let $j^{\infty}\func(x,y)= \sum\limits_{i=0}^{\infty} p_i(x,y)$ be the Taylor series of $\func$, where $p_i(x,y)$ is a homogeneous polynomial of degree $i$.
Then every $p_i$ satisfies~\eqref{equ:xpy_ypx} whence $j^{\infty}\func(x,y)= \sum\limits_{i=0}^{\infty} a_i(x^2+y^2)^i$ for some $a_i\in\RRR$.
By a theorem of E.~Borel there exists a $\Cinf$ function $\gfunc:\RRR\to\RRR$ such that 
$$
j^{\infty}\gfunc(t) = \sum_{i=0}^{\infty} a_i t^i, 
$$
whence 
$j^{\infty}\func(x,y)=j^{\infty}\gfunc(x^2+y^2)$, which completes our statement.

\section{Acknowledgenemts}
The author is grateful to Ye.~Polulyakh, V.~Krouglov, and D.~Ilyutko for useful discussions of Proposition~\ref{pr:collin-conjug-lin-maps}.

\end{document}